\numberwithin{equation}{section}
\newtheorem{theorem}{Theorem}[section]
\newtheorem{proposition}[theorem]{Proposition}
\theoremstyle{definition}
\theoremstyle{plain}
\newtheorem{lemma}[theorem]{Lemma}
\newtheorem{corollary}[theorem]{Corollary}
\theoremstyle{remark}
\newtheorem{remark}[theorem]{Remark}
\title[Linearization and localization of nonconvex functionals]{Linearization and localization of nonconvex functionals motivated by nonlinear peridynamic models}
 \author{Tadele Mengesha}
 \address[Tadele Mengesha]{Department of Mathematics,
 University of Tennessee,
 Knoxville, TN 37996, USA}
 \email{mengesha@utk.edu}
 \author{James M. Scott}
 \address[James M. Scott]{Department of Applied Physics and Applied Mathematics,
 Columbia University,
 New York, NY 10025, USA}
 \email{jms2555@columbia.edu}
\begin{document}

\maketitle

\begin{abstract}

We consider a class of nonconvex energy functionals that lies in the framework of the peridynamics model of continuum mechanics. 
The energy densities are functions of a nonlocal strain that describes deformation based on pairwise interaction of material points, and as such are nonconvex with respect to nonlocal deformation.
We apply variational analysis to investigate the consistency of the effective behavior of these nonlocal nonconvex functionals with established classical and peridynamic models in two different regimes.
In the regime of small displacement, we show the model can be effectively described by its linearization. To be precise, we rigorously derive what is commonly called the linearized bond-based peridynamic functional as a $\Gamma$-limit of nonlinear functionals.
In the regime of vanishing nonlocality, the effective behavior the nonlocal nonconvex functionals is characterized by an integral representation, which is obtained via $\Gamma$-convergence with respect to the strong $L^p$ topology.
We also prove various properties of the density of the localized quasiconvex functional such as frame-indifference and coercivity. We demonstrate that the density vanishes on matrices whose singular values are less than or equal to one. 
These results confirm that the localization, in the context of $\Gamma$-convergence, of peridynamic-type energy functionals exhibit behavior quite different from classical hyperelastic energy functionals.
\end{abstract}

\section{Introduction}\label{sec:Intro}
Consider a material occupying a bounded domain $\Omega\subset \mathbb{R}^{d}$.   
According to the bond-based peridynamics model, formulated by S. Silling in   \cite{Silling2001,Silling2007}, a microelastic material can be intuitively thought of as a complex mass-spring system where every pair of points $\bx$ and $\by$ are connected by a spring that captures a possibly nonlinear relationship between force and displacement. The force in the spring depends on the bond $\boldsymbol{\xi} = \by-\bx$.  When the material is subject to  a deformation ${\bf v}:\Omega \to \mathbb{R}^{d}$, the microstretch in the direction of $\frac{\by-\bx}{|\by-\bx|}$ at $\bx$ is given by $
 |\bv(\by)-\bv(\bx)| 
$.  Denoting the vector difference quotient $\cD\bv(\by, \bx)=  \frac{\bv(\by)-\bv(\bx)} {|\by-\bx|}$, the quantity $|\cD\bv(\by, \bx)| = \frac{|\bv(\by)-\bv(\bx)|}{|\by-\bx|} $ represents the ratio of the change in the length of the bond $\by-\bx$
 due to the deformation.  The pairwise {\it microstrain} or {\it microelongation} of the spring bond connecting $\bx$ and $\by$ is then given by 
 \[
 s[{\bv}](\by, \bx) :=  |\cD\bv(\by, \bx)|-1,
 \]
or more generally, 
 \[
s_m[\bv] (\by,\bx) := \frac{1}{m} \left( |\cD\bv(\by,\bx)|^m - 1 \right),
\]
where $m$ is a fixed positive constant in $[1,\infty)$, 
to accommodate different materials that exhibit different responses to external loads. The strain $s[{\bv}]$ corresponds to $m=1$. 
 For any deformation $\bv$ and any material point $\bx, \by$, we assume that $s_m[{\bv}](\by,\bx) \in (-\frac{1}{m}, \infty)$ 
 where a positive value indicates bond elongation and a negative value indicates bond compression. If $ s_m[{\bv}](\by,\bx)=0$, the bond length is unchanged by the deformation. 
 
Microelastic isotropic bond-based peridynamic materials are characterized  by a pairwise microelastic potential function $w= w(\bsxi,s)$ where the stored potential energy at a material point $\bx$ due to the deformation of the spring bond ${\bsxi=\by-\bx}$ is given by $w(\by-\bx,s_m[\bv](\by, \bx))$. 
Transitioning to macroelasticity, which concerns the body as a whole instead of interactions between pairs of points, at a point $\bx$, we define the {\it macroelastic energy density} functional
associated to the deformation $\bv$ as 
\begin{equation}\label{eq:totalelastic-density}
W(\bx, \bv) = \int_{\Omega}w(\by-\bx,s_m[\bv](\by,\bx) ) \, \rmd \by. 
\end{equation}
The total macroelastic energy, which represents the total stored energy accumulated through the deformation ${\bv}$ is thus given by 
 \begin{equation*}
 \int_{\Omega} W(\bx, \bv) \, \rmd \bx=\int_{\Omega}\int_{\Omega} w(\by-\bx,s_m[\bv](\by,\bx) )  \, \rmd \by \, \rmd \bx.
 \end{equation*} 
 The pairwise microelastic potential  function $w= w(\bsxi,s):\mathbb{R}^d\times (-1, \infty) \to \mathbb{R}$ is assumed to satisfy conditions that are natural in physical models. First, for any $\bsxi \in \bbR^d$, the map $w(\bsxi,\cdot)$ is nonnegative, locally Lipschitz, and twice differentiable near $s=0$.  
 Second, for any $\bsxi$, $w(\bsxi,s)=0$ if and only if $s=0$, and $\frac{\partial w}{\partial s} (\bsxi,0) = 0$.
That is, unchanged bond lengths do not contribute to the strain energy.
Finally, there exist positive constants $\alpha$ and $r$ and a nonnegative locally integrable function $\rho$ so that for any $\bsxi \in \bbR^d$, $w(\bsxi,s) \geq \alpha \rho(\bsxi) s^{2}$ for any $|s|<r$. 
This condition says that for small strain, the material obeys {\it Hooke's law}, where the restoring spring force is proportional to the elongation. Detailed conditions on $w$, as well as examples used in the literature, will be given in the next section.
A common feature of these microelastic energy functions is their global nonconvexity in the deformation.
That is, for each $\bx$, the integrand $W$ given by \eqref{eq:totalelastic-density} is nonconvex as a function of $|\cD\bv|$.

In this paper, we discuss two 
challenges that arise naturally in the study of
nonlinear nonconvex peridynamic models. The first is the rigorous justification of the derivation of the commonly-used linearized bond-based peridynamics model \cite{Silling2010, mengesha-du-2014-elasticity, alali2015peridynamics, mengesha2015VariationalLimit} 
obtained from nonlinear and nonconvex peridynamic models.
The second is the consistency of the peridynamic theory with the theory of classical continuum mechanics. This has been a central topic of study since the introduction of peridynamics as alternative model for multiscale modeling in solid mechanics,\cite{Silling2001,Silling2007,SL2008} (see also \cite{Bellido2015,bellido2020bond,mengesha2015VariationalLimit}).  To illustrate main ideas, we take specific values of $m$ in the initial mathematical description, and reintroduce a general value in the proofs in subsequent sections.

The derivation of the linearized bond-based peridynamics model takes the following path:
Formally, it is expected that for a naturally behaving elastic material, small external loads result in small displacements. Following the reasoning in \cite{DalMaso-linearization}, we write the deformation as $\bv(\bx) = \bx + \hat{\bu}(\bx)$, where $\hat{\bu}$ is the displacement field, $\hat{\bu} \equiv {\bf 0}$ corresponds to the equilibrium configuration, with corresponding deformation $\bi(\bx) = \bx + {\bf 0} = \bx$.
Now given a loading field $\bl$, let us consider a smoothly changing external loading field $\bl(\bx;\veps)$ with the property that $\bl(\bx;\veps) = \veps \bl(\bx) + O(\veps^2)$. Our formal understanding is that the resulting displacement field $\hat{\bu}(\bx; \veps)$ behaves similarly: $\hat{\bu}(\bx; \veps) = \veps \bu(\bx) + O(\veps^2)$ for a fixed underlying field $\bu$.
The corresponding deformation is $\bv_\veps= \bx + \veps \bu + O(\veps^2),$ and the first order approximation of the associated microstrain is $s[\bv_\veps](\by, \bx) = \left|{\cD\bi(\by,\bx)+ \veps\cD\bu(\by,\bx)}\right| -1$. 
The equilibrium configurations are then stationary points of the total elastic energy 
\begin{equation}\label{small-displacement-energy}
\int_{\Omega}\int_{\Omega} w(\by-\bx, \left|\cD\bi(\by,\bx) + \veps\cD\bu(\by, \bx) \right| -1) \, \rmd \by \, \rmd \bx - \veps^2\int_{\Omega} \bl(\bx)\cdot\bu(\bx) \, \rmd \bx\,. 
\end{equation}
After writing $\left|\cD\bi(\by,\bx) + \veps\cD\bu(\by, \bx) \right| -1
 = \frac{\left|\cD\bi(\by,\bx) + \veps\cD\bu(\by, \bx) \right|^{2} -1 }{\left|\cD\bi(\by,\bx) + \veps\cD\bu(\by, \bx) \right| +1}$, we have 
\[
\begin{split}
\left|\cD\bi(\by,\bx) + \veps\cD\bu(\by, \bx) \right| -1 
&=\veps \cD\bu(\by, \bx)\cdot \cD\bi(\by, \bx) \\
&+ \frac{|\veps\cD\bu|^2 + \veps \cD\bu(\by, \bx)\cdot \cD\bi(\by, \bx)(1 -|\cD\bi(\by,\bx) + \veps\cD\bu(\by, \bx)|) }{\left|\cD\bi(\by,\bx) + \veps\cD\bu(\by, \bx) \right| +1}\\
&=\veps \cD\bu(\by, \bx)\cdot \cD\bi(\by, \bx) + O(\veps^{2}|\cD\bu|^{2}(\by, \bx))\,.
\end{split}
\]
Now, if the vector field ${\bf u}$ is assumed to be Lipschitz, then using Taylor expansion on $w$ gives
\[
w(\by-\bx,\left|\cD\bi(\by,\bx) + \veps\cD\bu(\by, \bx) \right| -1) = \frac{\veps^2}{2}\frac{\partial^{2}w}{\partial s^2}(\by-\bx,0) (\cD\bu(\by, \bx)\cdot \cD\bi(\by, \bx))^{2} + O(\veps^2)\,.
\]
As a consequence,  
dividing the total energy in \eqref{small-displacement-energy} by $\veps^2$ and taking a limit in $\veps$, we obtain that 
\begin{equation}\label{lip-limit-energy}
\begin{split}
\lim_{\veps\to 0} \frac{1}{\veps^2 } &\left(\int_{\Omega}\int_{\Omega} w(\by-\bx, \left|\cD\bi(\by,\bx) + \veps\cD\bu(\by, \bx) \right| -1 )\, \rmd \by \, \rmd \bx - \veps^2\int_{\Omega} \bl(\bx)\cdot\bu(\bx)\, \rmd \bx \right)\\
 &= \frac{1}{2} \int_{\Omega}\int_{\Omega} \rho(|\by-\bx|)  (\cD\bu(\by, \bx)\cdot \cD\bi(\by, \bx))^{2} \, \rmd \by \, \rmd \bx - \int_{\Omega} \bl(\bx)\cdot\bu(\bx) \, \rmd \bx\,,
\end{split}
\end{equation}
where $\rho(\bsxi) = \frac{\partial^{2}w}{\partial s^2}(\bsxi,0)$ is the resulting interaction kernel.
The right-hand side of \eqref{lip-limit-energy} is usually taken as the total energy in linearized bond-based peridynamics models.  
We note that this convergence of the re-scaled sequence of potential energies is under the assumption that the field $\bu$ is Lipschitz, which excludes vector fields that may have discontinuities. In \cite[page 222]{C-N}, it is argued that weakening the condition for linearization is advantageous because it ``permits us to consider perturbations containing jump discontinuities in ${\bu}$.'' Indeed, the ability to model discontinuous deformation fields is a central attractive feature of peridynamics models.
In this regard, our first main result justifies the linearized bond-based peridynamics model for vector fields that make the right-hand side of \eqref{lip-limit-energy} finite, which is a class of vector fields that is bigger than that of Lipschitz vector fields. Most importantly, depending on the singularity of the interaction kernel $\rho$, this class may contain discontinuous vector fields. We will state the result precisely in the next section but broadly speaking we will prove that the sequence of functionals
\begin{equation}\label{nl-total-energy}
E_{\veps}(\bu)= \frac{1}{\veps^2 }\int_{\Omega}\int_{\Omega} w(\left|\cD\bi(\by,\bx) + \veps\cD\bu(\by, \bx) \right| -1, |\by-\bx|)\, \rmd \by \, \rmd \bx - \int_{\Omega} \bl(\bx)\cdot\bu(\bx)\, \rmd \bx 
\end{equation}
will
\textit{$\Gamma$-converge} with respect to the strong $L^{2}$ topology to 
\begin{equation}\label{quad-energy}
 E_{0}(\bu) = \frac{1}{2} \int_{\Omega}\int_{\Omega} \rho(|\by-\bx|)  (\cD\bu(\by, \bx)\cdot \cD\bi(\by, \bx))^{2} \, \rmd \by \, \rmd \bx - \int_{\Omega} \bl(\bx)\cdot\bu(\bx)\, \rmd \bx. 
\end{equation}
While it remains an open question, the nature of this mode of convergence is that, under some additional condition, minimizers of \eqref{nl-total-energy} subject to some volumetric conditions converge to a minimizer of the limiting energy functional \eqref{quad-energy}. 

The second issue we discuss in this paper is the consistency of the peridynamics theory with the classical continuum mechanics theory. This has been a central topic of study since the introduction of peridynamics as alternative model for multiscale modeling in solid mechanics \cite{SL2008,Silling2001,Silling2007} (see also \cite{Bellido2015,bellido2020bond, mengesha2015VariationalLimit}). 
The underlying assumption in the peridynamics model is that there are direct interactions between points that are at a finite distance from each other. As described in \cite{SL2008}, the maximum interaction distance introduces a length scale for the material model, which is called the \textit{horizon}. A common justification for the consistency of the peridynamic model with the classical model is to derive and analyze the form of the resulting strain energy functional as this
horizon vanishes. For linearized peridynamics, its consistency with linearized elasticity has been established with various level of rigor, including proofs via variational convergence. Indeed, associated to a given interaction kernel $\rho\in L^{1}_{loc}(\mathbb{R}^{d})$ that is radial with $\rho(|\bsxi|)=0$ for $|\bsxi|>1$,  we can introduce a sequence of kernels $\rho_{\delta}(|\bsxi|) = \frac{1}{\delta^{d}} \rho(\frac{|\bsxi|}{\delta})$, where in this case $\delta$ represents the horizon. As shown in \cite{mengesha2015VariationalLimit}, 
the sequence of quadratic potential energy functionals \eqref{quad-energy} associated to $\rho_\delta$ have a $\Gamma$-limit with respect to the strong $L^2$-topology, and this $\Gamma$-limit $E_{loc}$ is a gradient energy given by
\[
E_{loc}(\bu) = \mu\int_{\Omega}2|\text{Sym}(\nabla \bu)|^2  + (\div {\bf u})^2 \, \rmd \bx - \int_{\Omega}\bl(\bx)\cdot \bu(\bx) \, \rmd \bx\,.
\]
The functional $E_{loc}$ is the well known Navier potential energy corresponding to a constant Poisson ratio ${1\over 4}$ and bulk modulus $\mu$ calculated from $\rho$. The consistency of the more general state-based linearized peridynamic model to that of the linearize elasticity  model for arbitrary Poisson ratio is additionally established in \cite{mengesha2015VariationalLimit}. 
Note in particular that the convergence preserves the class of displacements that correspond to unstrained configurations. Indeed, the same class of infinitesimal rigid motions $\bu=\mathbb{Q}\bx + {\bf b}$ with $\mathbb{Q}^{T} = -\mathbb{Q}$ solve the differential equation
$
\text{Sym}(\nabla \bu) = { \bf 0}$ in $\Omega
$
as well as the difference equation $\cD\bu(\by, \bx)\cdot \cD\bi(\by, \bx) = 0$ for almost all $\bx, \by \in \Omega$.

The situation in the case of nonlinear peridynamics is quite different. To demonstrate this, let us consider the sequence of microelastic energy densities 
\[
w_\delta(\bsxi, s) = \rho_\delta(|\bsxi |)(s^2-1)^2
\]
introduced in \cite{Silling2001} for an isotropic homogeneous microelastic peridynamic material, where $\rho_\delta$ is as above.
The corresponding macroscopic strain energy associated with the deformation $\bv$ is
\begin{equation}\label{nonlinear-StV}
\int_\Omega\int_{\Omega} \rho_{\delta}(\by-\bx)(|{\mathcal{D} \bv}(\by, \bx)|^2 - 1)^2 \, \rmd \by \, \rmd \bx\,,
\end{equation}
which vanishes when the deformation $\bv$ is distance-preserving, i.e., 
\begin{equation}\label{eq:Intro:DistPreserve}
    |\bv(\bx)-\bv(\by)| = |\bx-\by| \text{ for all } \bx, \by \in \Omega\,.
\end{equation} 
It is well-known that such maps are necessarily affine maps whose gradient is a rotation (see Theorem \ref{lma:MeasFxnsAreRigid} for a proof).
If we fix a twice-continuously differentiable map $\bv$ and let $\delta$ tend to $0$ in the sequence of macroscopic strain energies, we obtain
\begin{equation}\label{eq:Intro:LocalLimit}
    \lim_{\delta \to 0} \int_\Omega\int_{\Omega} \rho_{\delta}(\by-\bx)(|{\mathcal{D} \bv}(\by, \bx)|^2 - 1)^2 \, \rmd \by \, \rmd \bx = \int_{\Omega} \bar{w}(\nabla \bv (\bx))\, \rmd \bx
\end{equation}
where for any $d \times d$ matrix, $\mathbb{F}$, $\bar{w}(\mathbb{F}) = 2\mu\|\bbF^T \bbF-\bbI\|^2 + \mu(\|\bbF\|^2 -d)^2$ for some $\mu = \mu(d)$. This can be obtained via direct calculation, see also \cite{Bellido2015,ponce2004new}. The energy density on the right-hand side is the St.\ Venant-Kirchhoff strain energy density function of a hyperelastic material with bulk modulus $\mu$. The twice-continuously differentiable deformations corresponding to unstrained configurations for the St.\ Venant-Kirchhoff material are those $\bv : \Omega \to \bbR^d$ such that 
\begin{equation}\label{eq:Intro:DiffInc}
    \nabla \bv(\bx)^{T} \nabla \bv(\bx)=\bbI \quad \text{ for all } \bx \in \Omega\,.
\end{equation}
Any smooth vector field $\bv$ that satisfies \eqref{eq:Intro:DiffInc} has gradient matrix $\nabla \bv(\bx)$ equal to a constant orthogonal matrix for all $\bx\in \Omega$, and so necessarily $\bv$ is affine; see Theorem \ref{thm:LocalRigiditySmoothFxns}. So if the class of deformations is restricted to smooth functions, the zero sets of \eqref{nonlinear-StV} and \eqref{eq:Intro:LocalLimit} coincide.

However, the presence of external body forces and general boundary conditions can introduce discontinuities in the deformation gradient and its higher derivatives. Indeed, since the peridynamic functionals depend only on first-order difference quotients,
it is natural to consider the localization limit in a topology permitting discontinuous gradients, such as the uniform topology of continuous functions.
These discontinuities can interact with the non-convexity of the functionals to produce effective behavior of minimizers that is quite different than the above uniform limit would suggest. Let us illustrate with the following one-dimensional example. Take $\Omega = (0,1)$, and for a number $N \in \{1,2,\ldots\}$ define $v_N : [0,1] \to \bbR$ as the following function that has a ``sawtooth'' profile:
\begin{equation*}
    v_N(x) :=
    \begin{cases}
        x - \frac{2k}{2N}\,, &\text{ for } x \in [\frac{2k}{2N}, \frac{2k+1}{2N}) \\
        -x + \frac{2k+2}{2N}\,, &\text{ for } x \in [\frac{2k+1}{2N}, \frac{2k+2}{2N}) 
    \end{cases}
    \qquad \text{ for } k \in \{0,1,\ldots,N-1\}\,.
\end{equation*}
For each $N$, $v_N$ is a Lipschitz continuous function, with weak derivative defined almost everywhere taking the value of either $-1$ or $1$.
Then, for the kernel $\rho$ defined as $\rho(z) = \frac{1}{2}$ if $|z| < 1$ and $0$ otherwise, it is straightforward to calculate
\begin{equation*}
    \int_0^1 \int_{ \{ |z| < \delta \} } \rho_{\delta}(y-x)(|{\mathcal{D} v_N}(y, x)|^2 - 1)^2 \, \rmd y \, \rmd x = \frac{8}{15} N \delta\,. 
\end{equation*}
For the choice of  $\delta = \delta(N) = \frac{1}{N^2}$ (or similar), the sequence of energies tends to $0$ as $N \to \infty$, i.e. we expect $v_N$ to be a minimizing sequence of the localized limit. However, $v_N \to 0$ uniformly in $[0,1]$, which clearly does not minimize $ \int_{0}^1 \bar{w}(v'(x))\, \rmd x$. Thus, this local functional does not capture the effective behavior of the nonlocal functionals in the localization limit taken in this topology.

More generally, the relations \eqref{eq:Intro:DistPreserve} and \eqref{eq:Intro:DiffInc} interact with uniform convergence in different ways. On one hand, if a sequence of Lipschitz continuous deformations $\{ \bv_k \}$ satisfying \eqref{eq:Intro:DistPreserve} has a uniform limit $\bv$, that continuous deformation $\bv$ must necessarily satisfy \eqref{eq:Intro:DistPreserve}; see Theorem \ref{thm:ReshetnyakVersion1} for a more general statement. That is, the relation \eqref{eq:Intro:DistPreserve} is \textit{rigid}. On the other hand, the relation \eqref{eq:Intro:DiffInc} is \textit{not} rigid, since there exist sequences of Lipschitz continuous deformations $\{ \bv_k \}$ satisfying \eqref{eq:Intro:DiffInc} that converge uniformly to $\bv = {\bf 0}$, which does not satisfy \eqref{eq:Intro:DiffInc}.

To summarize, in the regime of variational convergence the behavior of the nonlocal functionals is not retained in the localization limit.
As a special case of the result we obtain in this paper, it turns out that the $\Gamma$-limit of \eqref{nonlinear-StV} has an integral representation with quasiconvex density $w_\infty$ of the form $\int_{\Omega} w_{\infty}(\nabla \bv) \, \rmd \bx$ for $\bv \in W^{1, 4}(\Omega;\mathbb{R}^{d})$, 
an integral functional that depends only on 
the gradient of the vector field. Moreover, we will show that 
\[
w_{\infty}(\bbF) = 0 \,\text{ if and only if }\, \bbF^{T}\bbF \leq \bbI\text{ in the sense of quadratic forms}. 
\]
This implies that the zero set of the limiting functional consists of Sobolev maps whose gradient satisfies $\nabla \bv^{T}(\bx)\nabla \bv(\bx)\leq \bbI$ for almost all $\bx\in \Omega$, which is a larger class than the zero set of the St.\ Venant-Kirchhoff strain energy functional. This class of the zero sets also makes it clear that no classical hyperelastic material model can be obtained as a $\Gamma$-limit of the sequence of nonlocal energy functionals of the form \eqref{nonlinear-StV}.

The paper is organized as follows: In Section \ref{sec:SMR} we define notation and precisely state the main results of the paper. Section \ref{sec:Lin} contains the $\Gamma$-convergence result for the linearization of the nonlocal functionals.
In Section \ref{sec:Loc} we prove the integral representation result for the $\Gamma$-limit of the nonlocal functionals in the localization limit.
We then derive further properties of this integral representation in Section \ref{sec:LocProp}.

\section{Statement of main results}\label{sec:SMR}
Before we state the main results of the paper, let us fix notation as well as assumptions. 
The open Euclidean ball centered at $\bx_0 \in \bbR^d$ with radius $r$ is denoted $B(\bx_0,r)$.
The set of $d \times d$ matrices with real entries is denoted $\bbR^{d \times d}$.
We denote the transpose of a matrix $\bbF \in \bbR^{d \times d}$ by $\bbF^T$. The set $\cO(d) \subset \bbR^{d \times d}$ consists of all $\bbF$ such that $\bbF^T \bbF = \bbI$, where $\bbI$ denotes the $d \times d$ identity matrix.
We denote $d$-dimensional Lebesgue measure as $\rmd \bx$, and denote $d-1$-dimensional surface measure as $\rmd \sigma$. For a measure $\nu$ and a $\nu$-measurable function $f$, the integral average is denoted by $\fint_A f \, \rmd \nu = \frac{1}{\nu(A)} \int_A f \, \rmd \nu$. 
For $1 \leq p \leq \infty$, standard Lebesgue and Sobolev spaces for general vector fields $\bu : \bbR^d \supset \Omega \to \bbR^N$ are denoted $L^p(\Omega;\bbR^N)$ and $W^{1,p}(\Omega;\bbR^N)$ respectively, with the codomain omitted when $N = 1$.

We assume that the microelastic potential $w : \bbR^d \times [-1,\infty) \to [0,\infty)$ satisfies the following:
\begin{enumerate}
    \item[i)] $w(\bsxi,s) = k(\bsxi) \Psi(\bsxi,s)$, where $\Psi$ is a Carath\'eodory  function with $s \mapsto \Psi(\bsxi,s)$ locally Lipschitz on $[-1,\infty)$.
    \item[ii)] There exists $\delta_0>0$ such that for each $\bsxi$, $\Psi(\bsxi,\cdot) \in C^2([0,\delta_0])$,  $\Psi(\bsxi,0) = 0$,$\frac{\p \Psi}{\p s}(\bsxi,0) = 0$, $ \frac{\p^2 \Psi}{\p s^2}(\bsxi,0) >0$
and 
    \begin{equation*}
        \inf_{|s| \geq s_0} \inf_{\xi \in \bbR^d} \Psi(\bsxi,s) > 0 \text{ for all } s_0 > 0\,.
    \end{equation*}
    \item[iii)]
    There exist constants $c_1 > 0$ and $\delta_0 > 0$ such that for all $\bsxi$
    \begin{equation*}
        k(\bsxi)\Psi(\bsxi,s) \geq c_1 k(\bsxi)s^2 \text{ for } |s| < \delta_0\,.
    \end{equation*}
    \item[iv)] There exists $c_2 >0$ such that
    \begin{equation*}
       k(\bsxi) \left| \frac{\p^2 \Psi}{\p s^2}(\bsxi,s)  \right| \leq c_2 k(\bsxi) \quad \text{ for } |s| < \delta_0\, \text{and $\bsxi\in \bbR^d$}.
    \end{equation*}
\end{enumerate}
It follows from the above assumptions that that for some $c>0,$ and all $\bsxi\in \bbR^d$, 
\begin{equation*}
     k(\bsxi)  \frac{\p^2 \Psi}{\p s^2}(\bsxi,0)s^2   \geq c  k(\bsxi)  s^2 \text{ for all } s \in \bbR\,.
\end{equation*}
Define $\rho :\bbR^{d}\to [0, \infty)$ by 
\[
\rho(\bsxi) =  k(\bsxi)  \frac{\p^2 \Psi}{\p s^2}(\bsxi,0)
\] and assume it to be in $L^{1}_{loc}(\bbR^d)$. The kernel will be used to define  
 the function space
\begin{equation}\label{dot-space}
\mathcal{X}_\rho(\Omega) = \left\{\bu\in L^{2}(\Omega;\bbR^d): \int_{\Omega}\int_{\Omega} \rho(\by-\bx)(\mathcal{D}\bu(\by,\bx)\cdot \mathcal{D}\bi (\by, \bx))^2 \, \rmd \by \, \rmd \bx < \infty\right\}. 
\end{equation} 
We note that for any $\bl$ and $\bu\in L^{2}(\Omega,\bbR^d)$ the functional $ E_0(\bu)$ defined in \eqref{quad-energy}
    is finite if and only if $\bu\in \mathcal{X}_\rho(\Omega)$. We are now ready to state the $\Gamma$-convergence result for the linearization regime.
\begin{theorem}\label{thm:Linearization}
Suppose the microelastic energy density function $w(\bsxi, s)$ satisfies the above conditions \textrm{i})-\textrm{iv}). 
    For a given external load $\bl \in L^2(\Omega;\bbR^d)$,
    define the functional
    $\overline{E}_{\veps} : L^2(\Omega;\bbR^d) \to \overline{\bbR}$ by
    \begin{equation*}
        \overline{E}_{\veps}(\bu) = 
        \begin{cases}
            E_\veps(\bu)\,, & \quad \text{if $E_\veps(\bu)<\infty$},\\
            +\infty\,, & \quad \text{otherwise.}
        \end{cases}
    \end{equation*}
    where $E_\veps$ is the sequence of functionals introduced in \eqref{nl-total-energy}. 
    Then, with respect to the strong topology on $L^2(\Omega)$, we have 
    \begin{equation*}
        \GammalimL2_{\veps \to 0^+}
        \overline{E}_{\veps}(\bu) = \overline{E}_0(\bu) := 
        \begin{cases}
            E_0(\bu)\,, & \quad \bu \in \mathcal{X}_\rho(\Omega)\,, \\
            +\infty\,, & \quad \bu \in L^2(\Omega;\bbR^d) \setminus \mathcal{X}_\rho(\Omega).
        \end{cases}
    \end{equation*}
\end{theorem}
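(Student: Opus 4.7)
The plan is to prove the two defining conditions of $\Gamma$-convergence in $L^2(\Omega;\bbR^d)$: a liminf inequality for every sequence $\bu_\veps \to \bu$ in $L^2$, and existence of a recovery sequence $\bu_\veps \to \bu$ with $\limsup \overline{E}_\veps(\bu_\veps) \leq \overline{E}_0(\bu)$. Since the linear forcing $\int_\Omega \bl\cdot\bu$ is continuous under strong $L^2$ convergence, both reduce to statements about the nonlocal quadratic part alone; throughout I write $s_\veps(\by,\bx) := |\cD\bi(\by,\bx) + \veps\, \cD\bu_\veps(\by,\bx)| - 1$ and $T_\veps := s_\veps/\veps$.

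For the liminf inequality I assume $\sup_\veps \overline{E}_\veps(\bu_\veps) \leq M < \infty$ (passing to a subsequence) and partition $\Omega\times\Omega$ into the \emph{good set} $G_\veps := \{|s_\veps| < \delta_0\}$ and the \emph{bad set} $A_\veps := (\Omega\times\Omega)\setminus G_\veps$. On $A_\veps$, the infimum bound in assumption (ii) yields $\Psi(\bsxi,s_\veps) \geq \gamma > 0$, so that $\iint_{A_\veps} k\,\rmd\by\,\rmd\bx \leq C\veps^2$; combined with the pointwise inequality $\rho \leq c_2 k$ (a direct consequence of (iv) at $s=0$), the bad set is negligible in the $\rho$-weighted sense. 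On $G_\veps$, coercivity (iii) gives $\iint_{G_\veps} k\, T_\veps^2 \,\rmd\by\,\rmd\bx \leq C/c_1$, whence $T_\veps \mathbf{1}_{G_\veps}$ is bounded in the weighted space $L^2(\Omega\times\Omega;\rho\,\rmd\by\,\rmd\bx)$, and so along a subsequence it converges weakly to some $T$. The crucial step is the identification $T(\by,\bx) = \cD\bu(\by,\bx)\cdot\cD\bi(\by,\bx)$. I would establish this using the algebraic identity
\[
T_\veps = \frac{2\,\cD\bi\cdot\cD\bu_\veps + \veps\, |\cD\bu_\veps|^2}{|\cD\bi+\veps\cD\bu_\veps|+1},
\]
together with the observation that $L^2$ boundedness of $\{\bu_\veps\}$ forces $\cD\bu_\veps$ to be bounded in $L^2$ on every compact subset of $\Omega\times\Omega$ away from the diagonal; this in turn yields $\veps|\cD\bu_\veps|^2 \to 0$ and $\cD\bu_\veps \to \cD\bu$ both in $L^2_{\mathrm{loc}}$ away from the diagonal, from which the identification follows by testing the weak convergence against $\varphi \in C_c^\infty(\Omega\times\Omega\setminus\{\bx=\by\})$. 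A uniform Taylor estimate
\[
\Psi(\bsxi,s_\veps) \geq \tfrac{1}{2}\,\tfrac{\p^2\Psi}{\p s^2}(\bsxi,0)\, s_\veps^2 - \omega(\veps)\, s_\veps^2 \quad \text{on } G_\veps,
\]
with a modulus $\omega(\veps)\to 0$ uniform in $\bsxi$ (available from (iv) and continuity of $\tfrac{\p^2\Psi}{\p s^2}$ at $s=0$), combined with weak lower semicontinuity of the convex quadratic functional $\iint \rho\, |\cdot|^2$, then delivers the desired inequality.

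For the recovery sequence I proceed in two stages. If $\bu$ is Lipschitz I take the constant sequence $\bu_\veps \equiv \bu$: since $|s_\veps| = O(\veps\|\bu\|_{\mathrm{Lip}})$ uniformly, the Taylor expansion holds globally for $\veps$ small and yields the pointwise limit $\veps^{-2}\, w(\bsxi,s_\veps) \to \tfrac{1}{2}\rho(\bsxi)(\cD\bu\cdot\cD\bi)^2$, while the bound in (iv), combined with the derived inequality $\rho \geq c\, k$, supplies an integrable dominating function of the form $C\rho(\bsxi)\|\bu\|_{\mathrm{Lip}}^2$, so dominated convergence closes the argument. For arbitrary $\bu \in \mathcal{X}_\rho(\Omega)$, I would approximate by Lipschitz fields $\bu_n$ obtained via mollification with a smooth kernel $\eta_n$, exploiting the identity
\[
(\cD\bu_n \cdot \cD\bi)(\by,\bx) = \int_{\bbR^d} (\cD\bu \cdot \cD\bi)(\by - \bz,\, \bx - \bz)\, \eta_n(\bz)\, \rmd\bz,
\]
which is valid because $\cD\bi$ depends only on $\by-\bx$ and is therefore invariant under simultaneous translation of both arguments. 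This identity reduces the energy convergence $E_0(\bu_n) \to E_0(\bu)$ to continuity of translation in the weighted space $L^2(\Omega\times\Omega; \rho\,\rmd\by\,\rmd\bx)$, after which a standard $\Gamma$-convergence diagonal extraction produces the recovery sequence for $\bu$.

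The main obstacle is the identification $T = \cD\bu \cdot \cD\bi$ in the liminf step: the nonlinearity of the map $\cD\bu_\veps \mapsto T_\veps$ combined with the restriction to the good set necessitates the distributional argument outlined above, together with careful use of the local $L^2$ compactness of difference quotients. A secondary technicality is ensuring that the mollification-based approximation respects the weighted quadratic form near the singularity of $\rho$ at the origin and near $\partial\Omega$; the latter is typically handled by an intermediate extension step for fields in $\mathcal{X}_\rho(\Omega)$.
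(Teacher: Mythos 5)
Your overall architecture (liminf inequality plus recovery sequence, with the forcing term handled by $L^2$-continuity) matches the paper's, and your Lipschitz-case recovery sequence is exactly the paper's. However, both halves of your argument contain genuine gaps, and the liminf half is far heavier than it needs to be. For the liminf, the paper does not decompose into good and bad sets or pass to weak limits at all: it extracts an a.e.-convergent subsequence of $\bu_\veps$, notes that for a.e.\ fixed $(\bx,\by)$ the vectors $\cD\bu_\veps(\by,\bx)$ converge (hence lie in a bounded set $K$), applies the pointwise Taylor limit $\veps^{-2}w(\bsxi,\wt{s}_m(\bsxi/|\bsxi|,\veps\bszeta_\veps))\to \rho(\bsxi)(\bszeta\cdot\bsxi/|\bsxi|)^2$ for each fixed $\bsxi$, and concludes by Fatou using only $w\ge 0$. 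Your route, by contrast, hinges on the uniform Taylor lower bound $\Psi(\bsxi,s_\veps)\ge\bigl(\tfrac12\tfrac{\p^2\Psi}{\p s^2}(\bsxi,0)-\omega(\veps)\bigr)s_\veps^2$ \emph{on all of} $G_\veps=\{|s_\veps|<\delta_0\}$ with $\omega(\veps)\to0$. This does not follow from hypotheses i)--iv): on $G_\veps$ the strain $s_\veps$ is only known to be smaller than $\delta_0$, not small, and the hypotheses give $\Psi(\bsxi,\cdot)\in C^2([0,\delta_0])$ for each $\bsxi$ with a uniform \emph{bound} $c_2$ on the second derivative, but no uniform-in-$\bsxi$ modulus of continuity of $\tfrac{\p^2\Psi}{\p s^2}(\bsxi,\cdot)$ at $s=0$. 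To salvage your estimate you would need a further decomposition of $G_\veps$ into $\{|s_\veps|<\eta\}$ and $\{\eta\le|s_\veps|<\delta_0\}$ \emph{and} an additional equicontinuity assumption on the second derivative; the Fatou argument needs neither, because pointwise convergence in $(\bx,\by)$ suffices there.

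The second gap is in the recovery sequence for general $\bu\in\cX_\rho(\Omega)$. Your mollification identity requires evaluating $\bu$ at $\bx-\bz$ with $\bx$ near $\p\Omega$, i.e.\ outside $\Omega$, and you defer this to "an intermediate extension step for fields in $\cX_\rho(\Omega)$." No such extension theorem is available off the shelf: $\cX_\rho(\Omega)$ is defined by a degenerate, $\rho$-dependent quadratic form (the kernel is only $L^1_{loc}$ and the seminorm only controls the projected difference quotient $\cD\bu\cdot\cD\bi$), and constructing a bounded extension is precisely the nontrivial content you are skipping. The paper circumvents extension entirely (Theorem \ref{global-boundary-approximation}): near a Lipschitz boundary chart it translates $\bu$ \emph{inward} before mollifying, so that all evaluations stay inside $\Omega$, and then patches with a partition of unity; the convergence $[\bu_\veps-\bu]_{\cX_\rho}\to0$ is obtained not from translation continuity in the weighted space (which is itself not automatic for general $\rho$) but by splitting the double integral near and far from the diagonal, controlling the near part by absolute continuity of the integral and the far part by the $L^2$ norm. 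Either supply the extension operator and the weighted translation-continuity statement with proofs, or replace this step by the boundary-translation argument.
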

Throughout the literature on bond-based peridynamics, a number of microelastic energy density functions are proposed that satisfy some of the above conditions. One example is a peridynamic formulation of a constitutive law for microelastic brittle materials  (MBM), in which the microelastic force density ${\partial w\over \partial s}(\bsxi,s)$ is zero if the strain $s$ exceeds a given threshold $s_0$. In this case, the  density is taken to be $w(\bsxi,s) = k(\bsxi)\psi(s)$ where 
$\psi(s) = 
 c {s^{2}\over 2}$ if $s\leq s_0$ and 
$ c {s_0^{2}\over 2}$ if $s>s_0$.
One can think of the spring connecting the material particles as linear for small strain, but the spring breaks when the strain exceeds $s_0$, see \cite[Section 14]{Silling2001}. 
A second example is a modified MBM model, which introduces a strain zone where the force density weakens and eventually vanishes instead of vanishing immediately after vanishing a threshold strain, see \cite{C-N,Q-T-T} for more. 
 In \cite{Lipton-Brittle, Lipton-Cohesive}, still another version is studied: 
 \begin{equation*}
 w(\bsxi,s) = k(\bsxi) f(\bsxi s^2)
 \end{equation*}
where $f:[0, \infty)\to \mathbb{R}$ is a nonnegative, smooth and convex-concave function with the properties
 \[
f(0)=0, \quad \lim_{r\to 0+} {f(r)\over r} = f'(0) >0, \quad\lim_{r\to \infty} f(r) = f_{\infty} < \infty.
 \]
 As discussed in \cite{Lipton-Cohesive}, this choice of energy density gives a microelastic force density function that is linear for small strains and that weakens indefinitely after a certain strain threshold. For a fixed $\bsxi$, all of the above mentioned microelastic energy densities are bounded as a function of $s$. 
 In \cite{Silling2001}, Silling argued that classical energy density functions for some hyperelastic materials can be computed from  given microelastic potential functions such as 
 \begin{equation*}
 w(\bsxi,s) = k(|\bsxi|)(s^2-1)^2\,.
 \end{equation*}
One can also construct other examples such as $w(\bsxi,s) = k(\bsxi)(g(s + 1)-1)^2$ where $g$ is differentiable, $g(1)=1$ and $g'(1)\neq 0$. 
In connection with modeling phase transition via peridynamics, authors in \cite{Kaushik} and recently in\cite{aguiar} have also studied microelastic potentials of the form $w(\bsxi,s) = \rho(\bsxi)\min\{s^2, (s-s_0)^{2}\}$ for some nonnegative $s_0$.

Our second result holds for a specialized class of microelastic potentials. Here we assume that $w= w(\bsxi,s): \bbR^d \times (-1, \infty) \to \mathbb{R}$ is of the form
 \begin{equation*}
 	w_n(\bsxi,s) = \rho_n(\bsxi) \Phi(|s|) 
 \end{equation*}
 where $\Phi 
 		: [0,\infty) \to [0,\infty)$ is a nondecreasing and convex function with $p$-growth:
 \begin{equation}\label{eq:PDEnergy:ConditionsOnG}
 	\begin{gathered}
 		C_0(|a|^p - 1) \leq \Phi(a) \leq C_1(1+|a|^p)\,, \qquad \text{ for all } a \in [0,\infty)\,, \\
 		\Phi(a) = 0 \text{ if and only if } a =0\,,
 	\end{gathered}
 \end{equation}
where $C_0$, $C_1$ are given positive constants and $p \in (1,\infty)$. The sequence of  nonnegative radial kernels $\{ \rho_n \}_{n \in \bbN}$ are assumed to converge to the Dirac-Delta measure at 0, i.e.
\begin{gather}
	\int_{\bbR^d} \rho_n(\bx) \, \rmd \bx = 1\,, \qquad \lim_{n \to \infty} \intdm{\bbR^d \setminus B(\boldsymbol{0},\delta)}{\rho_n (\bsxi)}{\bsxi} = 0 \qquad \forall \, \delta > 0\,.
 \label{eq:AssumptionsOnKernels1} \tag{A}
\end{gather}
For each $n$, we now consider the sequence of functionals
\begin{equation}\label{p-growing-nl-energy}
F_{n}(\bv) = \iintdm{\Omega}{\Omega}{\rho_n(\bx-\by) \Phi\left( \left|\frac{|\bv(\bx)-\bv(\by)|}{|\bx-\by|} - 1 \right|\right) }{\by}{\bx}.
\end{equation}
From the growth conditions \eqref{eq:PDEnergy:ConditionsOnG} for $\Phi$, we see that for each $n$, $F_{n}(\bv) < \infty$ if and only if $\bv\in \mathfrak{W}^{\rho_n,p}(\Omega;\bbR^d)$ where for any $\rho\in L^{1}_{loc}(\mathbb{R}^d)$, the function space $\mathfrak{W}^{\rho,p}(\Omega;\bbR^d)$ is defined as 
\[
\mathfrak{W}^{\rho,p}(\Omega;\bbR^d) :=\left\{\bv\in L^{p}(\Omega;\mathbb{R}^d): [\bv]_{\mathfrak{W}^{\rho,p}(\Omega)}^{p}:=\iintdm{\Omega}{\Omega}{\rho(\bx-\by)  \frac{|\bv(\bx)-\bv(\by)|^p}{|\bx-\by|^p} }{\by}{\bx} < \infty\right\}. 
\]
This space has a natural norm $\|\cdot\|_{\mathfrak{W}^{\rho,p}(\Omega)}  := (\|\cdot\|^{p}_{L^{p}(\Omega)} + [\cdot]_{\mathfrak{W}^{\rho,p}(\Omega)}^{p})^{\frac{1}{p}}$ and is a reflexive Banach space with respect to this norm. 
Moreover, a well known result of Bourgain, Brezis, and Mironescu \cite{BBM} (see also \cite{ponce2004new}) states that the sequence of spaces $\mathfrak{W}^{\rho_n,p}(\Omega;\bbR^d)$ can be used to characterize the Sobolev space $W^{1,p}(\Omega;\mathbb{R}^{d})$ in the following sense: for any $\bv\in W^{1,p}(\Omega;\mathbb{R}^d)$, $\liminf_{n\to \infty}[\bv]^{p}_{\mathfrak{W}^{\rho_n,p}(\Omega)} = \int_{\Omega}\fint_{\mathbb{S}^{d-1}}|\nabla \bv(\bx)\bseta|^{p} \, \rmd \sigma(\bseta) \, \rmd \bx$, which is a seminorm equivalent to the $W^{1,p}(\Omega;\mathbb{R}^d)$-seminorm, and conversely, if $\bv\in L^{p}(\Omega;\mathbb{R}^d)$ and $\liminf_{n\to \infty}[\bv]^{p}_{\frak{W}^{\rho_n,p}(\Omega)}<\infty$, then $\bv\in W^{1,p}(\Omega;\mathbb{R}^d)$. 

The variational limit of the nonlinear peridynamic functionals \eqref{p-growing-nl-energy} in the localization limit is described in the following theorem:
\begin{theorem}\label{thm:Intro:GammaConvergence}
	Define the functionals $\overline{F}_n : L^p(\Omega;\bbR^d) \to \overline{\bbR}$ to be extensions of $F_n$ in \eqref{p-growing-nl-energy} to $L^p(\Omega;\bbR^d)$; that is,
	\begin{equation*}
		\overline{F}_n(\bv) := \begin{cases}
			F_n(\bv)\,, & \bv \in \frak{W}^{\rho_n,p}(\Omega;\bbR^d) \\
			+\infty\,, & \text{ otherwise. }
		\end{cases}
	\end{equation*}
	Then there exists a quasiconvex Carath\'eodory function $f_\infty : \bbR^{d \times d} \to \bbR$ satisfying the growth condition
	\begin{equation*}
		C(|\bbF|^p - 1) \leq f_\infty(\bbF) \leq C (|\bbF|^p + 1)
	\end{equation*}
	such that $\overline{F}_n$ $\Gamma$-converges in the strong topology on $L^p(\Omega;\bbR^d)$ to $\overline{F}_\infty$, where the 
 limiting functional $\overline{F}_{\infty} : L^p(\Omega;\bbR^d) \to \overline{\bbR}$ is defined as
	\begin{equation*}
		\overline{F}_{\infty}(\bv) = \begin{cases}
			\intdm{\Omega}{f_\infty(\grad \bv)}{\bx}\,, & \bv \in W^{1,p}(\Omega;\bbR^d)\\
			+\infty\,, & \text{ otherwise. }
		\end{cases}
	\end{equation*}
	Moreover, the effective strain energy density $f_\infty$ is frame-indifferent, i.e.\ $f_\infty(\bbU \bbF) = f_\infty(\bbF)$ for all $\bbU \in \cO(d)$, and $f_\infty(\bbF) = 0$ if and only if $\bbF^T \bbF \leq \bbI$ in the sense of quadratic forms.
\end{theorem}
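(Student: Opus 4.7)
The plan is to extract a $\Gamma$-convergent subsequence of $\{\overline{F}_n\}$ via the abstract compactness theorem for $\Gamma$-convergence, identify the limit as an integral functional via a classical integral representation theorem, and then deduce the structural properties of $f_\infty$ from the invariances of the original functionals $F_n$.

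First I would establish equi-coercivity: from the lower bound $\Phi(a) \geq C_0(|a|^p - 1)$ one obtains $[\bv_n]_{\mathfrak{W}^{\rho_n,p}(\Omega)}^p \leq C(1 + \overline{F}_n(\bv_n))$, and the Bourgain--Brezis--Mironescu characterization quoted in the excerpt ensures that any $L^p$-limit of such a sequence lies in $W^{1,p}(\Omega;\bbR^d)$. To obtain the integral representation of the $\Gamma$-limit I would localize by introducing set-functionals $\overline{F}_n(\bv, A)$ for open $A \subset \Omega$ and verify for the $\Gamma$-limit the standard hypotheses for integral representation: $L^p$-lower semicontinuity, $p$-growth, locality, translation invariance in $\bv$, translation invariance in the spatial variable (inherited from $\rho_n$ depending only on $\bx - \by$), and measure subadditivity/superadditivity. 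This yields $\overline{F}_\infty(\bv, A) = \int_A f_\infty(\nabla \bv) \, \rmd \bx$ on $W^{1,p}$, with $f_\infty$ quasiconvex (automatic from $L^p$-lower semicontinuity combined with Morrey's theorem via Rellich compactness) and satisfying the claimed $p$-growth. Uniqueness of $f_\infty$, and hence $\Gamma$-convergence of the full sequence, follows by evaluating the limit on affine boundary data in small cubes.

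Frame-indifference is direct: the map $\bv \mapsto \bbU\bv$ preserves $|\bv(\bx) - \bv(\by)|$ for $\bbU \in \cO(d)$, so $F_n(\bbU\bv) = F_n(\bv)$, and evaluating on $\bv(\bx) = \bbF\bx$ yields $f_\infty(\bbU\bbF) = f_\infty(\bbF)$. Radiality of $\rho_n$ additionally gives invariance under right-multiplication of $\bbF$ by orthogonal matrices, so $f_\infty$ depends only on the singular values of $\bbF$. For the zero set, the easy direction is $f_\infty(\bbF) = 0$ for $\bbF \in \cO(d)$, since $\bv(\bx) = \bbF\bx$ is an isometry and $F_n(\bbF\bx) = 0$ for all $n$. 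To see that $f_\infty(\bbF) > 0$ whenever some singular value of $\bbF$ strictly exceeds $1$, I would apply Jensen's inequality to the convex function $\Phi$ together with the radial symmetry of $\rho_n$ to obtain a local lower bound that is strictly positive exactly when $|\bbF\bseta| > 1$ for some unit vector $\bseta$.

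The main obstacle is the remaining direction: given $\bbF$ with $\bbF^T\bbF \leq \bbI$ but with some singular value strictly less than $1$, construct a recovery sequence $\bv_n \to \bbF\bx$ in $L^p$ with $F_n(\bv_n) \to 0$. The strategy is to produce piecewise-affine approximants whose gradients lie almost everywhere in $\cO(d)$, the local zero set of the integrand of each $F_n$, via an iterated lamination/origami construction modeled on the one-dimensional sawtooth example of the introduction. The delicate point is that such piecewise-isometric maps still carry nonzero $F_n$-energy concentrated near their folding interfaces, of order (density of folds) $\times$ (horizon width of $\rho_n$); one must diagonalize, choosing a sequence of folding scales $N_n \to \infty$ growing slowly enough that the interfacial contribution vanishes while $\bv_n \to \bbF\bx$ uniformly on $\Omega$. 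The bi-orthogonal invariance of $f_\infty$ established above reduces the construction to the case of diagonal $\bbF$ with singular values in $[0,1]$, for which a coordinate-wise sawtooth construction suffices.
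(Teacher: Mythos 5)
Your overall architecture coincides with the paper's: equi-coercivity from the $p$-growth of $\Phi$ plus the Bourgain--Brezis--Mironescu characterization, subsequence extraction by $\Gamma$-compactness, localization to set functionals, verification of the De Giorgi--Letta and Alicandro--Cicalese hypotheses (locality, measure property via sub/superadditivity and inner regularity, $p$-growth, translation invariance in $\bv$ and in $\bx$) to get the integral representation with quasiconvex $f_\infty$, and frame-indifference from $F_n(\bbU\bv)=F_n(\bv)$. The lower bound you obtain by Jensen/mollification is exactly the paper's $F_{\mathrm{LB}}(\bv)=\int_\Omega\fint_{\bbS^{d-1}}\Phi((|\nabla\bv\,\bsomega|-1)_+)\,\rmd\sigma\,\rmd\bx$, and it yields the ``only if'' direction of the zero-set claim just as in the paper.

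The one place you genuinely diverge is the ``if'' direction of the zero-set characterization. You propose to build the sawtooth/lamination recovery sequence directly at the nonlocal level and diagonalize the folding scale $N_n$ against the ``horizon width'' of $\rho_n$. The paper instead never touches the nonlocal functionals here: it first proves the upper bound $\overline{F}_\infty\le\int_\Omega Q\wt{\Phi}(\nabla\bv)$ (the constant sequence is a recovery sequence giving $\wt{\Phi}$, and quasiconvexification comes for free from weak $W^{1,p}$ lower semicontinuity of the $\Gamma$-limsup), and then shows $Q\wt{\Phi}(\bbF)=0$ for $\bbF^T\bbF\le\bbI$ by the Le Dret--Raoult lamination at the purely local level, using weak-$*$ $W^{1,\infty}$ lower semicontinuity of quasiconvex integrands. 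Both routes work, but note that your quantitative estimate ``(fold density) $\times$ (horizon width)'' is not available for general kernels satisfying only \eqref{eq:AssumptionsOnKernels1}, which have no explicit concentration rate; the clean way to close your argument is soft: for each \emph{fixed} sawtooth $\bw_N$ (whose gradient lies in $\cO(d)$ a.e.) the BBM-type limit already gives $\lim_n F_n(\bw_N)=\int_\Omega\wt{\Phi}(\nabla\bw_N)=0$, and an abstract diagonal extraction then produces $\bw_{N(n)}\to\bbF\bx$ in $L^p$ with $F_n(\bw_{N(n)})\to 0$. The paper's detour through the explicit two-sided bounds $F_{\mathrm{LB}}\le\overline F_\infty\le\int Q\wt\Phi(\nabla\bv)$ buys more than your construction: it is what powers the coercivity estimate $f_\infty(\bbF)\ge C(|\bbF|^p-1)$ and the later observation that the bounds do not coincide in general. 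Finally, be aware that your one-line claim that ``uniqueness of $f_\infty$\dots follows by evaluating the limit on affine boundary data'' is not actually justified by anything you (or the paper) establish, since $f_\infty$ is never computed explicitly and the upper and lower bounds differ; the paper quietly works along a fixed subsequence throughout, and you inherit the same subtlety.
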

Some remarks are in order. First, as discussed in the introduction, even though the energies $F_n$ vanish only for affine functions with gradient in $\cO(d)$, the limiting functional $\overline{F}_{\infty}$ vanishes for a much wider class of nonaffine functions. 
As a direct consequence, one notes that \textit{no stored energy for a classical hyperelastic material can be recovered as a $\Gamma$-limit of bond-based peridynamic functionals,} since the zero-set of the limiting functional $\overline{F}_{\infty}$ is always strictly larger than that of any stored energy functional of classical theory.  This observation is a constructive counterpart to 
a negative result in \cite{bellido2020bond} in which some functionals with polyconvex integrand are shown not to be obtainable via $\Gamma$-convergence of bond-based peridynamic functionals.

Second, the integral representation of $\Gamma$-limits of nonlocal functionals are available in the literature. For example,
Theorem \ref{thm:LocalGammaConvergence} is closely related to the results contained in \cite{alicandro2004general} for discrete energies. Our work is in fact inspired by the result and the approach used in \cite{alicandro2004general}. Similar results are also proved in \cite{alicandro2020variational} where energy densities of a very general form with symmetric kernels are studied.  However, the sequences of kernels considered there are exclusively of convolution type and have weaker singularity conditions than \eqref{eq:AssumptionsOnKernels1}, namely that $|\bsxi|^{-p}\rho(\bsxi) \in L^1(\bbR^d)$, and that $\rho_n(\bsxi) = n^d \rho(n \bsxi)$. Thus, for the theory developed in \cite{alicandro2020variational} the space $\frak{W}^{\rho_n,p}(\Omega;\bbR^d)$ is always $L^p(\Omega;\bbR^d)$, whereas in the present work the space may be strictly smaller; see \cite[Remark 2.5]{mengesha2015VariationalLimit}. In addition, the present work covers localizing kernels such as $\rho_s(|\bsxi|) = C(1-s)|\bsxi|^{d+sp-p} \chi_{B(\boldsymbol{0},1)}(\bsxi)$ for $s \in (0,1)$ and some normalizing constant $C$. The recent work \cite{Braides-DalMaso} has a closely-related result, in which the $\Gamma$-convergence in the weak topology on Sobolev spaces of a class of integral functionals expressed as a sum of a local and a non-local terms are studied. The sequence of microenergy densities we study indeed satisfy the growth and integrability conditions of the term that comprises the nonlocal part of \cite{Braides-DalMaso}. However, the additional convexity structure on $w$ with respect to the strain $s$ 
is important for our work as it allows us to obtain estimates for the limiting integral representation. Integral representations of nonlocal functionals are also obtained in \cite{ponce2004new} and \cite{Bellido2015}. In both of those works, sufficient conditions are provided that force the $\Gamma$-limit to be the integral of a quasiconvexification of the point-wise limit, i.e. 
$
\bar{w}(\mathbb{F}) = \fint_{\mathbb{S}^{d-1}} \Phi(|\mathbb{F} \bsnu| )d\sigma(\bsnu). 
$
The conditions effectively amount to requiring the convexity of $\Phi$ in $\mathbb{F}$, which is 
outside the setting of the present work. In fact, our work partially addresses one of the open problems posed in \cite{ponce2004new} that -- even in the absence of the sufficient condition -- an integral representation of the $\Gamma$-limit is possible.  Using the same techniques in \cite{ponce2004new}, upper and lower bounds for the $\Gamma$-limit can be obtained and we will present an example that shows these bounds may not be equal.

\section{Justification of linearization}\label{sec:Lin}
In this section we will present the proof of Theorem \ref{thm:Linearization} which states the proper justification of  bond-based peridynamics as a linearized model of nonlinear peridynamics.  The proof we present below also covers energy functionals whose corresponding microelastic energy density is a function of the more general nonlocal nonlinear strain $s_m[\bv] (\by,\bx) = \frac{1}{m} \left( |\cD\bv(\by,\bx)|^m - 1 \right)\,$ for $m\geq 1$. To that end, let us begin with a preliminary result related to the linearization of $s_m$.
\begin{lemma}\label{lma:NonlinearStrain:Taylor}
Let $m\geq 1$. For a given $(\bsnu, \bszeta)\in \bbS^{d-1}\times \bbR^{d}$ and $\veps > 0$, define the function 
\[
\wt{s}_m(\bsnu,\veps\bszeta) := \frac{1}{m} \left( |\bsnu + \veps \bszeta|^m - 1\right). 
\]
Then for any $\veps>0$, there exists a function $\psi$ such that 
\[\wt{s}_m(\bsnu,\veps\bszeta) = \veps \bsnu\cdot \bszeta + \veps^2 \psi(\veps, \bsnu,\bszeta).\]
Moreover, if $\veps_0>0$, and $ M>0$ are given  then there exists a constant $C>0$ that depends only on $d, m, \veps_0,$ and $M$ so that for all $(\veps, \bsnu, \bszeta)\in [0, \veps_0]\times \bbS^{d-1}\times B(\boldsymbol{0},M)$
\[
|\psi(\veps, \bsnu,\bszeta)| \leq C.
\]
\end{lemma}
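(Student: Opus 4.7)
My plan is to define $\psi$ from the identity itself: for $\veps > 0$, set
\[
\psi(\veps, \bsnu, \bszeta) := \veps^{-2}\bigl(\wt{s}_m(\bsnu, \veps\bszeta) - \veps \,\bsnu\cdot \bszeta\bigr),
\]
and extend by any bounded value at $\veps = 0$ (the identity in the lemma holds trivially there since both sides vanish). The entire lemma then reduces to proving a uniform bound on this quotient. Fixing $\bsnu \in \bbS^{d-1}$ and setting $\boldsymbol{\alpha} := \veps \bszeta$, I introduce
\[
f(\boldsymbol{\alpha}) := \frac{1}{m}(|\bsnu + \boldsymbol{\alpha}|^m - 1) - \bsnu \cdot \boldsymbol{\alpha},
\]
and aim to prove $|f(\boldsymbol{\alpha})| \leq C |\boldsymbol{\alpha}|^2$ uniformly for $|\boldsymbol{\alpha}| \leq R := \veps_0 M$ and $\bsnu \in \bbS^{d-1}$, with $C = C(d, m, R)$. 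Plugging in $\boldsymbol{\alpha} = \veps \bszeta$ immediately gives $|\psi(\veps, \bsnu, \bszeta)| \leq C |\bszeta|^2 \leq C M^2$.

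To prove this bound I split into two regimes. First, when $|\boldsymbol{\alpha}| \leq 1/2$, we have $|\bsnu + \boldsymbol{\alpha}| \in [1/2, 3/2]$, so the map $\boldsymbol{\beta} \mapsto |\bsnu + \boldsymbol{\beta}|^m$ is $C^2$ on $\overline{B(\boldsymbol{0}, 1/2)}$ with Hessian
\[
m|\bsnu + \boldsymbol{\beta}|^{m-2} \bbI + m(m-2) |\bsnu + \boldsymbol{\beta}|^{m-4}(\bsnu + \boldsymbol{\beta}) \otimes (\bsnu + \boldsymbol{\beta}),
\]
whose operator norm is bounded by a constant depending only on $d$ and $m$, uniformly in $\bsnu \in \bbS^{d-1}$. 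A direct computation gives $f(\boldsymbol{0}) = 0$ and $\nabla_{\boldsymbol{\alpha}} f(\boldsymbol{0}) = |\bsnu|^{m-2}\bsnu - \bsnu = 0$, so Taylor's theorem with integral remainder yields $|f(\boldsymbol{\alpha})| \leq C |\boldsymbol{\alpha}|^2$ in this regime. Second, when $|\boldsymbol{\alpha}| \in (1/2, R]$ (a regime encountered only if $\veps_0 M > 1/2$), I use the crude bounds $|f(\boldsymbol{\alpha})| \leq \frac{1}{m}((1 + R)^m + 1) + R$ together with $|\boldsymbol{\alpha}|^2 > 1/4$ to conclude the same estimate with a possibly larger constant depending on $m$ and $R$.

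The main obstacle motivating this case split is that for $m \in [1, 2)$ the map $\boldsymbol{\alpha} \mapsto |\bsnu + \boldsymbol{\alpha}|^m$ fails to be $C^2$ at the zero locus $\boldsymbol{\alpha} = -\bsnu$; when $\veps_0 M \geq 1$ this singularity sits inside the relevant parameter region and precludes any single global Taylor expansion with a Hessian bound depending only on $d, m, \veps_0, M$. The two-regime argument circumvents this: the large-$|\boldsymbol{\alpha}|$ regime surrounds the singularity and is handled by crude bounds exploiting $|\boldsymbol{\alpha}|^2 \gtrsim 1$, while the small-$|\boldsymbol{\alpha}|$ regime stays clear of the singularity and admits a clean Taylor estimate.
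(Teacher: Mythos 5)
Your proof is correct, and it takes a genuinely different route from the paper's. The paper splits on the exponent: for $m \geq 2$ it applies Taylor's theorem directly to $\bszeta \mapsto \frac{1}{m}|\bsnu + \veps\bszeta|^m$ and bounds the Lagrange-remainder Hessian by $C|\bsnu + t\veps\bszeta|^{m-2} \leq C(d,m,\veps_0,M)$ (harmless since $m-2 \geq 0$), while for $1 \leq m < 2$ it circumvents the failure of $C^2$ regularity at $\bsnu + \veps\bszeta = \boldsymbol{0}$ by the algebraic identity $\wt{s}_m = \frac{1}{2m}\left(|\bsnu+\veps\bszeta|^{2m}-1\right)\cdot\frac{2}{|\bsnu+\veps\bszeta|^m+1}$, reducing to the exponent $2m \geq 2$ and expanding the second factor separately. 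You instead keep a single formula for all $m \geq 1$ and split on the size of $\boldsymbol{\alpha} = \veps\bszeta$: a clean Taylor estimate with vanishing zeroth- and first-order terms on $|\boldsymbol{\alpha}| \leq 1/2$, where the Hessian is uniformly controlled, and a crude absolute bound on $1/2 < |\boldsymbol{\alpha}| \leq \veps_0 M$, where $|\boldsymbol{\alpha}|^2 \gtrsim 1$ absorbs everything. Your diagnosis of why some split is unavoidable (the non-$C^2$ locus $\boldsymbol{\alpha} = -\bsnu$ lies in the parameter region once $\veps_0 M \geq 1$) is exactly right. What each approach buys: the paper's yields an explicit second-order coefficient (the Hessian evaluated at an intermediate point), which is aesthetically closer to the formal linearization in the introduction but requires the two-exponent reduction; yours is more elementary, treats all $m \geq 1$ uniformly, and avoids the slightly delicate expansion of the factor $\frac{2}{|\bsnu+\veps\bszeta|^m+1}$. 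Since the downstream use (Corollary \ref{pointwise-limit-density}) only needs the expansion identity plus the uniform bound, your argument serves equally well.
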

\begin{proof}
Let us first assume that $m\geq 2$. For a fixed $(\bsnu, \bszeta)\in \bbS^{d-1}\times \bbR^{d}$, using Taylor expansion gives 
\begin{equation}\label{eq:Linearization:Liminf:Pf1}
    \wt{s}_m(\bsnu,\veps \bszeta) = \frac{1}{m} \left(|\bsnu + \veps \bszeta|^m - 1\right) = \veps \bszeta \cdot \bsnu + \frac{\veps^2}{2} \frac{\p^2 \wt{s}_m}{\p \bszeta^2} (\bsnu,t \veps \bszeta) [\bszeta, \bszeta]
\end{equation}
for $t=t(\bsnu, \bszeta, \veps) \in (0,1)$, where
\begin{equation}
\label{eq:Linearization:Liminf:Pf2}
    \frac{\p^2 \wt{s}_m}{\p \bszeta^2 }(\bsnu,\bszeta) =  |\bszeta + \bsnu|^{m-2} \left( \bbI + (m-2) \frac{(\bszeta + \bsnu) \otimes (\bszeta + \bsnu)}{|\bszeta + \bsnu|^2} \right)\,.
\end{equation}
The first part of the lemma follows now by taking $\psi(\veps, \bsnu,\bszeta)=\frac{\p^2 \wt{s}_m}{\p \bszeta^2} (\bsnu,t \veps \bszeta) [\bszeta, \bszeta].$
Notice from \eqref{eq:Linearization:Liminf:Pf2} that for any unit vector $\bsnu$ and any vector $\bszeta$ 
\[
\left|\frac{\p^2 \wt{s}_m}{\p \bszeta^2} (\bsnu,t \veps \bszeta) [\bszeta, \bszeta]\right| \leq C(d,m)|\bsnu+ t\veps \bszeta|^{m-2}. 
\]
It then follows that if $|\bszeta|\leq M,$ and $0\leq\veps \leq \veps_0$, then $|\bsnu + t\veps \bszeta|^{m-2} \leq C(d, m, \veps_0, M)$. That completes the proof in the event that $m\geq2$. 

For the case $1\leq m<2$, we write, after multiplying and dividing by $|\bsnu + \veps \bszeta|^{m} +1$
\[
 \wt{s}_m(\bsnu,\veps \bszeta) = \frac{1}{m} \frac{|\bsnu + \veps \bszeta|^{2m} - 1}{|\bsnu + \veps \bszeta|^{m} + 1} = {1\over {2m}} (|\bsnu + \veps \bszeta|^{2m} - 1) {2\over |\bsnu + \veps \bszeta|^{m} + 1}
\]
Now since $2m \geq 2$, we may apply the first case to write as
\[
\wt{s}_m(\bsnu,\veps \bszeta) = (\veps \bsnu\cdot \bszeta + \veps^2 \wt{\psi}(\veps, \bsnu,\bszeta)) {2\over |\bsnu + \veps \bszeta|^{m} + 1}. 
\]
Using Taylor's expansion, we can also write
\[{2\over |\bsnu + \veps \bszeta|^{m} + 1} = 1 + \veps \phi(\veps, \bsnu, \bszeta), \quad \text{where $|\phi(\veps, \bsnu, \bszeta)| \leq 2m|\bsnu+ t\veps \bszeta|^{m-1}|\bszeta|$ and $t\in (0, 1)$.}\]
 Combining the above equalities we conclude the proof of the lemma. 
\end{proof}

\begin{corollary}\label{pointwise-limit-density}
    Assume that $w:\bbR^{d}\times (-1, \infty)\to [0, \infty)$ satisfies all the conditions stated in Theorem \ref{thm:Linearization}. Then, given a bounded subset $K\subset \bbR^d$, sufficiently small $\veps_0 >0$, and $\bsxi\neq 0$, we have
    \[
    \sup_{0<\veps\leq \veps_0} \sup_{\bszeta \in K} \left| {1\over \veps^2} w(\bsxi, \wt{s}_m({\bsxi/|\bsxi|},\veps \bszeta)) \right| \leq C \, k(\bsxi)\,,
    \]
    \[
    \lim_{\veps \to 0} {1\over \veps^2} w(\bsxi, \wt{s}_m({\bsxi/|\bsxi|},\veps \bszeta)) = k(\bsxi)\frac{\p^2 \Psi}{\p s^2}( \bsxi, 0) \left(\bszeta \cdot\frac{ \bsxi}{|\bsxi|}\right)^2\,, \quad \forall \bszeta \in K \text{ and } \veps \in (0,\veps_0)
    \]
\end{corollary}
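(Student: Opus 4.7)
The plan is to combine Lemma~\ref{lma:NonlinearStrain:Taylor} with a second-order Taylor expansion of $\Psi(\bsxi, \cdot)$ at $s = 0$. Setting $\bsnu := \bsxi / |\bsxi| \in \bbS^{d-1}$ and $M := \sup_{\bszeta \in K} |\bszeta|$, the lemma produces a function $\psi(\veps, \bsnu, \bszeta)$, bounded by a constant $C_0 = C_0(d, m, \veps_0, M)$, such that
$$ s_\veps := \wt{s}_m(\bsnu, \veps \bszeta) \;=\; \veps \, \bsnu \cdot \bszeta \;+\; \veps^2 \, \psi(\veps, \bsnu, \bszeta). $$
In particular $|s_\veps| \leq \veps (M + \veps_0 C_0)$, so after shrinking $\veps_0$ once and for all we may assume $|s_\veps| < \delta_0$ for all $\veps \in (0, \veps_0]$ and all $\bszeta \in K$, placing $s_\veps$ inside the regularity zone of $\Psi(\bsxi, \cdot)$.

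Next I would use the factorization $w = k \Psi$ together with the properties $\Psi(\bsxi, 0) = 0$ and $\p_s \Psi(\bsxi, 0) = 0$ from condition~ii), combined with Taylor's theorem in integral remainder form, to obtain the working identity
$$ \frac{1}{\veps^2} \, w(\bsxi, s_\veps) \;=\; k(\bsxi) \, (\bsnu \cdot \bszeta + \veps \psi)^2 \int_0^1 (1-t) \, \frac{\p^2 \Psi}{\p s^2}(\bsxi, t \, s_\veps) \, \rmd t. $$
The uniform bound follows immediately from condition~iv), which gives $k(\bsxi) |\p_{ss} \Psi(\bsxi, s)| \leq c_2 k(\bsxi)$ for $|s| < \delta_0$, together with the fact that the prefactor $(\bsnu \cdot \bszeta + \veps \psi)^2$ is controlled by $(M + \veps_0 C_0)^2$ uniformly in $\veps$ and $\bszeta$; integrating $(1-t)$ against this over $[0,1]$ produces only a harmless numerical constant, and the entire expression is dominated by a constant multiple of $k(\bsxi)$.

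For the pointwise limit, I would send $\veps \to 0^+$ inside the $t$-integral by dominated convergence, with the dominating function again provided by condition~iv). The continuity of $\p_{ss} \Psi(\bsxi, \cdot)$ at $s = 0$ (guaranteed by condition~ii)) yields $\p_{ss} \Psi(\bsxi, t s_\veps) \to \p_{ss} \Psi(\bsxi, 0)$ uniformly in $t \in [0,1]$, while $(\bsnu \cdot \bszeta + \veps \psi)^2 \to (\bsnu \cdot \bszeta)^2$ follows from the uniform boundedness of $\psi$. I do not anticipate a serious obstacle here; the main technical subtlety is to pin down $\veps_0$ small enough that $|s_\veps|$ remains within the smoothness interval of $\Psi$ so that the Taylor expansion and condition~iv) apply simultaneously, and to track that the two expansions (of $\wt{s}_m$ from Lemma~\ref{lma:NonlinearStrain:Taylor} and of $\Psi$) combine cleanly so that the leading $\veps^2$ terms isolate $(\bsnu \cdot \bszeta)^2$ while the subleading remainders carried by $\psi$ vanish in the limit.
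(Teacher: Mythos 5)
Your proof is correct and takes essentially the same route as the paper's: the paper also combines Lemma~\ref{lma:NonlinearStrain:Taylor} with a second-order Taylor expansion of $\Psi(\bsxi,\cdot)$ at $s=0$ (in Lagrange rather than integral-remainder form) and then invokes conditions ii) and iv). One remark worth making: carried out exactly, your computation gives $\lim_{\veps\to 0}\veps^{-2}w(\bsxi,s_\veps)=\bigl(\int_0^1(1-t)\,\rmd t\bigr)\,k(\bsxi)\,\frac{\p^2\Psi}{\p s^2}(\bsxi,0)(\bsnu\cdot\bszeta)^2=\tfrac12\,k(\bsxi)\,\frac{\p^2\Psi}{\p s^2}(\bsxi,0)(\bsnu\cdot\bszeta)^2$, so the factor $\tfrac12$ is not a ``harmless constant'' in the limit statement; the corollary as printed (and the paper's own display \eqref{loc-limit-fixed-xieta}) drop this $\tfrac12$, which is an imprecision of the paper rather than a gap in your argument.
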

\begin{proof}
    The proof follows directly from Lemma \ref{lma:NonlinearStrain:Taylor}, the assumptions on $w$, and the Taylor expansion 
    \begin{equation}\label{loc-limit-fixed-xieta}
\frac{1}{\veps^2} w(\bsxi, \wt{s}_m(\bsxi/|\bsxi|,\veps \bszeta)) = \frac{1}{\veps^2} k(\bsxi)\Psi(\bsxi, \wt{s}_m(\bsxi/|\bsxi|,\veps \bszeta) )=k(\bsxi)\frac{\p^2 \Psi}{\p s^2}( \bsxi, \veps^*) \left(\bszeta \cdot\frac{ \bsxi}{|\bsxi|} + O(\veps)\right)^2
\end{equation}
for some $\veps^* > 0$ that depends on $\veps$, $\bsxi$, and $\bszeta$. Moreover, for a fixed $\bsxi \neq 0$, $\displaystyle \lim_{\veps \to 0} \sup_{\bszeta \in K } \veps^* = 0$. 
\end{proof}
We also need the following result on the density of smooth functions in the nonlocal function space $\mathcal{X}_\rho(\Omega)$. The proof follows an argument that establishes the corresponding result for Sobolev spaces, 
see for instance \cite{Evans}. We include the proof here to ensure its validity on a space equipped with a ``non-standard'' norm.

\begin{theorem}\label{global-boundary-approximation}
Assume that $\rho$ satisfies \eqref{eq:AssumptionsOnKernels1} and that $\Omega$ is a bounded Lipschitz domain. Then $C^{\infty}(\overline{\Omega}; \mathbb{R}^{d})$ is dense in $\cX_{\rho}(\Omega)$.
\end{theorem}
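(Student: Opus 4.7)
The plan is to adapt the classical two-step proof for density of smooth functions in $W^{1,p}(\Omega)$ on Lipschitz domains to the nonlocal seminorm defining $\mathcal{X}_\rho(\Omega)$. The essential feature enabling this adaptation is that both the kernel $\rho(|\by-\bx|)$ and the direction $\mathcal{D}\bi(\by,\bx) = (\by-\bx)/|\by-\bx|$ depend only on the difference $\by-\bx$, hence are invariant under the simultaneous translation $(\by,\bx) \mapsto (\by-\bz,\bx-\bz)$. Consequently, Jensen's inequality applied to a mollifier $\phi_\veps(\bz)\,\rmd\bz$ reduces the nonlocal seminorm of $\phi_\veps * \bu$ to a spatial average of translates of the seminorm of $\bu$ itself.

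\textbf{Step 1 (Interior approximation).} First I would show that $C^\infty(\Omega;\bbR^d) \cap \mathcal{X}_\rho(\Omega)$ is dense in $\mathcal{X}_\rho(\Omega)$. Following the Meyers--Serrin construction, let $\Omega_j := \{\bx \in \Omega : \operatorname{dist}(\bx,\partial\Omega) > 1/j\}$ and let $\{\eta_j\}$ be a smooth partition of unity subordinate to the covering $\{\Omega_{j+2}\setminus\overline{\Omega}_j\}$ of $\Omega$. Each $\eta_j \bu$ is compactly supported in a thin annular shell, and I mollify it at a scale $\veps_j$ small enough that both the $L^2$ error and the seminorm error are at most $\delta 2^{-j}$. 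The seminorm estimate on $\phi_{\veps_j} * (\eta_j \bu) - \eta_j \bu$ is controlled by the Jensen/translation argument above followed by the Lebesgue dominated convergence theorem.

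\textbf{Step 2 (Boundary approximation).} Next I would upgrade to $C^\infty(\overline{\Omega};\bbR^d)$ using the Lipschitz structure of $\partial \Omega$. The cone condition furnishes a finite open cover $\{U_i\}_{i=1}^N$ of $\partial\Omega$ and vectors $\{\boldsymbol{\beta}_i\}$ such that $(U_i \cap \Omega) + t\boldsymbol{\beta}_i \Subset \Omega$ for all sufficiently small $t > 0$. Choose $U_0 \Subset \Omega$ so that $\{U_i\}_{i=0}^N$ covers $\overline{\Omega}$ and let $\{\eta_i\}$ be a subordinate partition of unity. The piece $\eta_0 \bu$ is handled by Step 1. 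For each $i \geq 1$, extend $\bv_i := \eta_i \bu$ by zero to $\bbR^d$, define the translate $\bv_i^{(t)}(\bx) := \bv_i(\bx - t\boldsymbol{\beta}_i)$, and then mollify at scale $\veps \ll t$; the resulting function $\phi_\veps * \bv_i^{(t)}$ is smooth on $\bbR^d$, so its restriction to $\Omega$ belongs to $C^\infty(\overline{\Omega};\bbR^d)$. A diagonal choice of $(t_i,\veps_i)$ combined with Step 1 produces the required global smooth approximation.

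\textbf{Main obstacle.} The technical crux, and the main obstacle, is proving continuity of the translation $\bv_i^{(t)} \to \bv_i$ in the $\mathcal{X}_\rho(\Omega)$-seminorm as $t \to 0^+$. The difficulty is that the integration domain $\Omega \times \Omega$ is not translation invariant, so a direct change of variables leaves residual contributions from the boundary layer where $\bv_i^{(t)}$ and $\bv_i$ genuinely disagree. My plan is a two-step density/dominated convergence argument: first, use Jensen's inequality on the translate to obtain a bound on the seminorm of $\bv_i^{(t)}$ uniform in small $t$; second, approximate $\bv_i$ (in the $\mathcal{X}_\rho(\bbR^d)$-seminorm, where translations act trivially) by a smooth compactly supported function on $\bbR^d$, for which translation continuity is elementary, and then pass to the limit using the uniform bound. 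A completely analogous Jensen estimate handles the mollification step $\phi_\veps * \bv_i^{(t)} \to \bv_i^{(t)}$ in $\mathcal{X}_\rho(\Omega)$ as $\veps \to 0^+$ with $t$ fixed.
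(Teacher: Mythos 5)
Your overall architecture---inward translation along a cone/graph direction, mollification, a partition of unity subordinate to a boundary cover of $\p\Omega$ plus one interior piece, with the technical crux being continuity of translation in the nonlocal seminorm---is the same as the paper's; your Meyers--Serrin interior step is a harmless variant of the paper's single interior set $V_0$. The problem lies in your proposed resolution of the obstacle you correctly identify as the crux.

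The gap is this: you extend $\bv_i=\eta_i\bu$ by zero to $\bbR^d$ and run a three-epsilon argument in the $\cX_\rho(\bbR^d)$-seminorm, but the zero extension of $\eta_i\bu$ across $\p\Omega\cap U_i$ need not belong to $\cX_\rho(\bbR^d)$. The hypothesis is only $\rho\in L^1_{loc}$ (equivalently $\int_0^1\rho(t)\,t^{d-1}\,\rmd t<\infty$ near the origin), while the jump of $\eta_i\bu$ across the boundary contributes to the $\bbR^d$-seminorm a quantity of order $\int_0^1\rho(t)\,t^{d-2}\,\rmd t$, which can diverge. Concretely, take $d=2$, $\bu\equiv \be_1$ near a flat piece of $\p\Omega=\{x_2=0\}$ on which $\eta_i=1$, and $\rho(\bz)=|\bz|^{-2+\epsilon}\chi_{B(\boldsymbol{0},1)}(\bz)$ with $0<\epsilon<1$, which is in $L^1(\bbR^2)$. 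For $\bx$ above and $\by$ below the boundary one has $|\cD\bv_i(\bx,\by)\cdot\cD\bi(\bx,\by)|^2=(y_1-x_1)^2|\by-\bx|^{-4}$, and for each fixed $\bz=\by-\bx$ the admissible $\bx$ form a strip of measure comparable to $|z_2|$; integrating in polar coordinates gives a lower bound comparable to $\int_0^1\rho(t)\,\rmd t=\int_0^1 t^{-2+\epsilon}\,\rmd t=\infty$. (This is the familiar failure of extension by zero for $H^s$ with $s>1/2$, and such kernels are admissible here.) So the quantity you propose to approximate may be $+\infty$ and the argument never starts. The repair---which is what the paper does---is to never leave $\Omega$: bound the seminorm of the translate/mollification on the patch $V$ by that of $\bu$ on $\Omega$ via Jensen and a change of variables (legitimate because the inward shift maps $V$ into $\Omega$), and then prove convergence by splitting $V\times V$ into the near-diagonal set $\{|\bx-\by|<\delta\}$, made uniformly small by absolute continuity of the integral of $\rho(\bx-\by)|\cD\bu\cdot\cD\bi|^2$, and the far set $\{|\bx-\by|\geq\delta\}$, where the integrand is dominated by $C(\delta)$ times the square of the $L^2$-distance since $\chi_{\{|\bz|\geq\delta\}}\rho\in L^1$. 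With that substitution for your ``main obstacle'' step, the rest of your outline goes through.
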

We begin with the following lemma. 
\begin{lemma}\label{lma:Density1}
Assume that $\rho$ satisfies \eqref{eq:AssumptionsOnKernels1}. Suppose that ${\bf u} \in \cX_{\rho}(\Omega)$ with $\supp({\bf u}) \Subset \Omega$. Then there exists a sequence $ \{ {\bf u}_{n} \} \in C_{c}^{\infty}(\Omega; \mathbb{R}^{d})$ such that
\[
{\bf u}_{n} \to \bu \quad \text{in $\cX_{\rho}(\Omega)$ as $n\to \infty.$}
\]
\end{lemma}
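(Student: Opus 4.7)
The plan is to use standard mollification. Let $\eta \in C_c^\infty(B(\boldsymbol{0},1))$ be a nonnegative smooth mollifier with $\int_{\bbR^d} \eta \, \rmd \bx = 1$, set $\eta_{1/n}(\bx) := n^d \eta(n\bx)$, and extend $\bu$ by zero to all of $\bbR^d$. Define $\bu_n := \eta_{1/n} * \bu$. Since $\supp(\bu) \Subset \Omega$, for all $n$ large enough that $1/n < \operatorname{dist}(\supp \bu, \p \Omega)$, the function $\bu_n$ lies in $C_c^\infty(\Omega;\bbR^d)$, and the classical $L^2$ theory of mollifiers immediately gives $\bu_n \to \bu$ in $L^2(\Omega;\bbR^d)$. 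It then remains to prove the seminorm convergence
\begin{equation*}
    \int_\Omega \int_\Omega \rho(\by - \bx)\big(\cD(\bu_n-\bu)(\by,\bx) \cdot \cD\bi(\by,\bx)\big)^2 \, \rmd \by\,\rmd \bx \longrightarrow 0.
\end{equation*}

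The key observation is a convolution identity for the bond-direction difference quotient. Define, for $\bx \in \bbR^d$ and $\bh \in \bbR^d \setminus \{\boldsymbol{0}\}$,
\begin{equation*}
    G_\bv(\bx, \bh) := \cD\bv(\bx+\bh, \bx) \cdot \cD \bi(\bx+\bh, \bx) = \frac{(\bv(\bx+\bh) - \bv(\bx))\cdot \bh}{|\bh|^2}\,.
\end{equation*}
A direct calculation using the linearity of convolution shows that, for each fixed $\bh \neq \boldsymbol{0}$,
\begin{equation*}
    G_{\bu_n}(\cdot, \bh) = \eta_{1/n} *_{\bx} G_\bu(\cdot, \bh)
\end{equation*}
as functions on $\bbR^d$. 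Changing variables $\bh = \by - \bx$ in the seminorm of $\bu_n - \bu$, swapping the order of integration, and enlarging the $\bx$-integration from $\Omega \cap (\Omega - \bh)$ to all of $\bbR^d$, one arrives at the upper bound
\begin{equation*}
    \int_{\bbR^d} \rho(\bh) \big\| \eta_{1/n} * G_\bu(\cdot, \bh) - G_\bu(\cdot, \bh) \big\|_{L^2(\bbR^d)}^2 \, \rmd \bh\,.
\end{equation*}

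To conclude I would apply dominated convergence with respect to the measure $\rho(\bh) \, \rmd \bh$. For each fixed $\bh \neq \boldsymbol{0}$ the function $G_\bu(\cdot, \bh)$ lies in $L^2(\bbR^d)$ with norm at most $2|\bh|^{-1}\|\bu\|_{L^2(\Omega)}$, so the classical $L^2$-theory of mollifiers gives the pointwise convergence $\eta_{1/n} * G_\bu(\cdot, \bh) \to G_\bu(\cdot, \bh)$ in $L^2(\bbR^d)$, while Young's convolution inequality provides the uniform bound
\begin{equation*}
    \big\| \eta_{1/n} * G_\bu(\cdot, \bh) - G_\bu(\cdot, \bh) \big\|_{L^2(\bbR^d)}^2 \leq 4 \| G_\bu(\cdot, \bh) \|_{L^2(\bbR^d)}^2\,.
\end{equation*}
The only remaining step, and the main (though modest) technical obstacle, is to verify that the dominating function $\rho(\bh)\|G_\bu(\cdot,\bh)\|_{L^2(\bbR^d)}^2$ is integrable in $\bh$. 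This reduces to bounding $\int_{\bbR^d}\int_{\bbR^d} \rho(\by-\bx) G_\bu(\bx, \by-\bx)^2 \, \rmd \bx \, \rmd \by$, which one splits according to whether $\bx,\by$ belong to $\Omega$ or not: the piece over $\Omega\times\Omega$ is exactly $[\bu]_{\cX_\rho(\Omega)}^2 < \infty$; the piece over $\Omega^c\times\Omega^c$ vanishes because $\bu$ is extended by zero; and the two mixed pieces are dominated by $\operatorname{dist}(\supp \bu, \p\Omega)^{-2}\|\bu\|_{L^2(\Omega)}^2 \int_{\{|\bh|>d_0\}}\rho(\bh)\,\rmd\bh$, which is finite since the compact support of $\bu$ inside $\Omega$ keeps the integration well away from the singularity of $\rho$ at the origin, and the assumptions on $\rho$ force its integrability away from the origin.
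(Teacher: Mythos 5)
Your proof is correct, and while it uses the same approximating sequence as the paper (mollification of the compactly supported $\bu$, with the $L^2$ convergence immediate), your argument for convergence of the $\cX_\rho$-seminorm is genuinely different. The paper splits the double integral into a near-diagonal piece $\{|\by-\bx|<\delta\}$, made small by absolute continuity of the integral of the $L^1$ integrand, and a far piece bounded by $C(\delta)\Vnorm{\bu_\veps-\bu}_{L^2(\Omega)}^2$. You instead pass to the bond variable $\bh=\by-\bx$, use the commutation of mollification with the map $\bv \mapsto \cD\bv\cdot\cD\bi$ for each fixed bond, and run dominated convergence in $\bh$ against the dominating function $\rho(\bh)\,\Vnorm{G_\bu(\cdot,\bh)}_{L^2(\bbR^d)}^2$ (in your notation), whose integrability you verify by extending by zero and exploiting $\dist(\supp\bu,\p\Omega)>0$. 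Your route is slightly longer but arguably tighter: the uniform-in-$n$ domination is explicit, whereas in the paper's argument the threshold $\delta$ is chosen after $\bu_\veps$ is fixed, and one must observe separately (e.g.\ via the Jensen bound $[\bu_\veps]_{\cX_\rho(\Omega)}\leq[\bu]_{\cX_\rho(\Omega)}$ applied near the diagonal) that it can be taken uniformly in $\veps$. One small remark: your integrability check uses $\int_{\{|\bh|\geq d_0\}}\rho(\bh)\,\rmd\bh<\infty$, which is immediate under \eqref{eq:AssumptionsOnKernels1} as literally invoked in the lemma; even under the weaker standing hypothesis $\rho\in L^1_{loc}(\bbR^d)$ it suffices to note that $|\by-\bx|\leq\diam(\Omega)$ throughout, so only $\int_{\{d_0\leq|\bh|\leq\diam(\Omega)\}}\rho(\bh)\,\rmd\bh$ is actually needed.
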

\begin{proof}
Let $\varphi$ be the standard mollifier, and let $\varphi_\veps$ be the dilation of $\varphi$ by a factor of $\veps$.
Let $\eta =\dist(\supp({\bf u}), \Omega) > 0. $  Then for any $\veps  < \frac{\eta}{4},$ the vector field $\bu_\veps := \bu \ast \varphi_\veps \in C^{\infty}_c(\Omega;\bbR^d)$ with $\supp (\bu_\veps) \Subset \Omega$, and furthermore $[\bu_{\veps}]_{\cX_\rho(\Omega)} \leq [\bu]_{\cX_\rho(\Omega)}$ by an application of Jensen's inequality.
We claim that ${\bf u}_{\varepsilon} \to {\bf u}$ in $\cX_{\rho}(\Omega)$. Clearly ${\bf u}_{\veps} \to {\bf u}$ strongly in $L^{2}(\Omega;\mathbb{R}^{d})$, so we just need to check that $[{\bf u}_{\veps} - {\bf u}]_{\cX_{\rho}(\Omega)} \to 0$.

Fix $\tau > 0$. Since $\rho(\bx-\by) |\cD(\bu_\veps -\bu) (\bx,\by) \cdot \cD \bi(\bx,\by)|^2 \in L^1(\Omega \times \Omega)$, by continuity of the integral there exists a $\delta > 0$ such that
\begin{equation*}
    \int_\Omega \int_\Omega \chi_{ \{ |\by-\bx| < \delta \} } \rho(\bx-\by) |\cD(\bu_\veps -\bu) (\bx,\by) \cdot \cD \bi(\bx,\by)|^2 \, \rmd \by  \, \rmd \bx < \tau\,.
\end{equation*}
Now by \eqref{eq:AssumptionsOnKernels1} the function $\chi_{ \{ |\bz| < \delta \} } \rho(\bz) \in L^1_{loc}(\bbR^d)$, so therefore
\begin{equation*}
    \int_\Omega \int_\Omega \chi_{ \{ |\by-\bx| \geq \delta \} } \rho(\bx-\by) |\cD(\bu_\veps -\bu) (\bx,\by) \cdot \cD \bi(\bx,\by)|^2 \, \rmd \by  \, \rmd \bx \leq C(\delta) \Vnorm{\bu_\veps - \bu}_{L^2(\Omega)}^2\,.
\end{equation*}
We thus have that 
\begin{equation*}
    [\bu_\veps - \bu]_{\cX_\rho(\Omega)}^2 \leq C(\delta) \Vnorm{\bu_\veps - \bu}_{L^2(\Omega)}^2 + \tau\,.
\end{equation*}
Taking $\veps \to 0$ gives $\limsup_{\veps \to 0} [\bu_\veps - \bu]_{\cX_\rho(\Omega)}^2 \leq \tau$ for any $\tau > 0$, and the result is proved.
\end{proof}

\begin{proof}[Proof of Theorem \ref{global-boundary-approximation}] 
\noindent \underline{Step 1} (Local approximation):  Let $\bx_0 \in \p \Omega$. Then there exists $\gamma \in W^{1,\infty}(\bbR^{d-1})$, a radius $r > 0$, and an index $\sigma \in \{1,\ldots, d\}$ such that
    \begin{equation*}
        \Omega \cap B(\bx_0,r) = \{ \bx \in B(\bx_0,r) \, : \, x_\sigma > \gamma(x_1,\ldots,x_{\sigma-1}, x_{\sigma+1},\ldots, x_{d}) \}\,. 
    \end{equation*}
    Define $V := \Omega \cap B(\bx_0,\frac{r}{2})$. Now define the shifted point $\bx^{\veps} := \bx + \lambda \veps \be_\sigma$ for $\veps > 0$, where $\{ \be_i \}_{i=1}^d$ denotes the standard basis for $\bbR^d$. Then there exists $\veps_0 >0$ small and $\lambda > 0$ sufficiently large such that $B(\bx^{\veps},\veps) \subset \Omega \cap B(\bx_0,r)$ for all $\bx \in V$ and for all $\veps < \veps_0$.
    Define the shifted function $\bu_{\veps}(\bx) := \bu(\bx^{\veps})$ for $\bx \in V$. By a coordinate change, $[\bu_{\veps}]_{\cX_\rho(V)} \leq [\bu]_{\cX_\rho(\Omega)}$. Next define $\bv_{\veps} := \varphi_{\veps} \ast \bu_{\veps}$, where $\varphi_{\veps}$ is the dilation of the standard mollifier. Clearly $\bv_{\veps} \in C^{\infty}(\overline{V};\bbR^d)$, and by Jensen's inequality $[\bv_{\veps}]_{\cX_\rho(V)} \leq [\bu]_{\cX_\rho(\Omega)}$. Now we claim the following:
    \begin{enumerate}
        \item[i)] $\bv_{\veps} \to \bu \text{ in } \cX_\rho(V)$, and
        \item[ii)] for any $\zeta \in C^{\infty}_c(B(\bx_0,\frac{r}{2}))$ with $0 \leq \zeta \leq 1$, there exists $C = C(V,\zeta) > 0$ such that
        \begin{equation*}
            \Vnorm{\zeta \bv_{\veps} - \zeta \bu}_{\cX_\rho(\Omega)} \leq C \Vnorm{ \bv_{\veps} - \bu }_{\cX_\rho(V)}\,.
        \end{equation*}
    \end{enumerate}
    First we prove i). Write
    \begin{equation*}
        \Vnorm{\bv_{\veps} - \bu}_{\cX_\rho(V)} \leq \Vnorm{\bv_{\veps} - \bu_{\veps}}_{\cX_\rho(V)} + \Vnorm{\bu_{\veps} - \bu}_{\cX_\rho(V)}\,.
    \end{equation*}
    The first term converges to $0$ as $\veps \to 0$ by the same reasoning as in the proof of Lemma \ref{lma:Density1}. As for the second term, by continuity of translations in the $L^2$-norm we have $\Vnorm{\bu_{\veps} - \bu}_{L^2(V)} \to 0$ as $\veps \to 0$, so to prove i) we need to show that
    \begin{equation}\label{eq:Density2:Pf2}
        \limsup_{\veps \to 0} [\bu_{\veps} - \bu]_{\cX_\rho(V)} = 0\,.
    \end{equation}
    Let $\tau > 0$ be arbitrary. Then since $\rho(\bx-\by) |\cD(\bu_\veps -\bu) (\bx,\by) \cdot \cD \bi(\bx,\by)|^2 \in L^1(V \times V)$, by continuity of the integral there exists a $\delta > 0$ such that
    \begin{equation*}
    \int_V \int_V \chi_{ \{ |\by-\bx| < \delta \} } \rho(\bx-\by) |\cD(\bu_\veps -\bu) (\bx,\by) \cdot \cD \bi(\bx,\by)|^2 \, \rmd \by  \, \rmd \bx < \tau\,.
    \end{equation*}
    Now $\chi_{ \{ |\bz| < \delta \} } \rho(\bz) \in L^1_{loc}(\bbR^d)$ by \eqref{eq:AssumptionsOnKernels1}, so therefore
    \begin{equation*}
    \int_V \int_V \chi_{ \{ |\by-\bx| \geq \delta \} } \rho(\bx-\by) |\cD(\bu_\veps -\bu) (\bx,\by) \cdot \cD \bi(\bx,\by)|^2 \, \rmd \by  \, \rmd \bx \leq C(\delta) \Vnorm{\bu_\veps - \bu}_{L^2(\Omega)}^2\,.
    \end{equation*}
    We thus have that 
\begin{equation*}
    [\bu_\veps - \bu]_{\cX_\rho(V)}^2 \leq C(\delta) \Vnorm{\bu_\veps - \bu}_{L^2(\Omega)}^2 + \tau\,.
\end{equation*}
Taking $\veps \to 0$ gives $\limsup_{\veps \to 0} [\bu_\veps - \bu]_{\cX_\rho(V)}^2 \leq \tau$ for any $\tau > 0$, and \eqref{eq:Density2:Pf2} is proved.

Now we prove ii). Note that we use the convention $\zeta \bv_\veps(\bx) = \zeta \bu(\bx) = 0$ whenever $\bx \in \Omega \setminus V$. Since $\zeta \leq 1$ we have $\|\zeta (\bv_{\veps} - \bu) \|_{L^{2}(\Omega)} \leq  \|{\bf v}_{\veps} -{\bf u}\|_{L^{2}(V)}$.
Next we have
\[
\begin{split}
 [\zeta \bv_\veps - \zeta \bu]^{2}_{\cX_{\rho}(\Omega)}&= \int_{\Omega}\int_{\Omega} \rho(\bx-\by) |\cD(\zeta\bv_\veps -\zeta \bu) (\bx,\by) \cdot \cD \bi(\bx,\by)|^2 \, \rmd \by \, \rmd \bx\\
    &= \int_{V}\int_{V} \rho(\bx-\by) |\cD(\zeta \bv_\veps - \zeta \bu) (\bx,\by) \cdot \cD \bi(\bx,\by)|^2 \, \rmd \by \, \rmd \bx\\
    &+ 2 \int_{\Omega \setminus V} \int_{V} \rho(\bx-\by) \zeta^2(\by) | (\bv_\veps(\by) - \bu(\by) ) \cdot \cD \bi(\bx,\by)|^2 \, \rmd \by \, \rmd \bx \\
    & := J_{1}^{\veps} +J_{2}^{\veps}.
\end{split}
\]
Writing $ \cD(\zeta \bv_\veps - \zeta \bu) (\bx,\by) = \zeta(\bx) \cD(\bv_\veps -\bu) (\bx,\by) + (\bv_\veps(\by)-\bu(\by)) \cD\zeta(\bx,\by) $, we have
\[
J_{1}^{\veps} \leq \|\zeta\|^{2}_{L^{\infty}(V)}[\bv_\veps - \bu]_{\cX_{\rho}(V)}^2 + C(V) \|\nabla \zeta\|^{2}_{L^{\infty}(V)}\|{\bf v}_\veps - {\bf u}\|^{2}_{L^{2}(V)}\,.
\]
Moreover, since $R := \dist( \supp(\zeta), (\Omega\setminus V)) > 0$,
\[
\begin{split}
J_{2}^{\veps} &\leq 2 \int_{\Omega\setminus V}\int_{V\cap \supp(\zeta)} \rho(\bx-\by) \zeta^2(\by) | \bv_\veps(\by) - \bu(\by)|^2 \, \rmd \by \, \rmd \bx\\
&\leq 2\|\zeta\|^{2}_{L^{\infty}} \left(\int_{\{|\bsxi| > R\}}\rho(\bsxi) \, \rmd \bsxi \right) \left(\int_{V} |\bv_\veps(\by) - \bu(\by)|^2 \, \rmd \by\right)\,,
\end{split}
\]
concluding the proof of ii).

\noindent\underline{Step 2} (Global approximation): 
Let $\delta > 0$. Since $\p \Omega$ is compact, we can choose finitely many points $\bx_i \in \p \Omega$, radii $r_i = r(\bx_i) > 0$, and corresponding sets $V_i = \Omega \cap B(\bx_i,\frac{r_i}{2})$ so that $\p \Omega \subset \cup_{i=1}^N B(\bx_i,\frac{r_i}{2})$. By Step 1, we can find corresponding functions $\bv_i \in C^{\infty}(\overline{V}_i;\bbR^d)$ such that for $i = 1,\ldots,N$  
\begin{equation*}
    \Vnorm{\bv_i - \bu}_{\cX_\rho(V_i)} \leq \delta\,. 
\end{equation*}
Let $V_0 \Subset \Omega$ be an open set such that $\Omega \subset \cup_{i=0}^N V_i$ and choose a function $\bv_0 \in C^{\infty}(\overline{V}_0;\bbR^d)$ according to Lemma \ref{lma:Density1} that satisfies
\begin{equation*}
    \Vnorm{\bv_0 - \bu}_{\cX_\rho(V_0)} \leq \delta\,. 
\end{equation*}
Let $\{ \zeta_i \}_{i=0}^N$ be a $C^\infty$ partition of unity subordinate to the sets $\{ V_0, B(\bx_1,\frac{r_1}{2}), \ldots, B(\bx_N,\frac{r_N}{2}) \}$, and define $\bv := \sum_{i=0}^N \zeta_i \bv_i$.
Then $\bv \in C^{\infty}(\overline{\Omega};\bbR^d)$, and since $\bu = \sum_{i=0}^N \zeta_i  \bu $ we have
using Step 1, ii) as well as Lemma \ref{lma:Density1}
\begin{equation*}
    \Vnorm{ \bv - \bu }_{\cX_\rho(\Omega)} \leq \sum_{i = 0}^N C(V_i,\zeta_i) \Vnorm{\bv_i - \bu}_{\cX_\rho(V_i)} \leq C(N+1) \delta\,.
\end{equation*}
That concludes the proof of the theorem.
\end{proof}

\begin{proof}[Proof of Theorem \ref{thm:Linearization}]
Recall the functionals $E_\veps$ and $E_{0}$ defined in \eqref{nl-total-energy} and \eqref{quad-energy}, respectively, and their corresponding extensions $\overline{E}_\veps$ and $\overline{E}_0$. The result follows from the inequalities
\[ \overline{E}_{0} 
\leq \GammaliminfL2_{\veps \to 0} \overline{E}_{\veps}
\leq \GammalimsupL2_{\veps \to 0} \overline{E}_{\veps}\leq \overline{E}_{0}, 
\]
where $\GammaliminfL2_{\veps \to 0} \overline{E}_{\veps}$ and $\GammalimsupL2_{\veps \to 0} \overline{E}_{\veps}$ are the lower and upper $\Gamma$-limits, respectively, of $ \overline{E}_{\veps}$.
The second inequality is trivial, and the first and third inequalities are established in Lemma \ref{lma:Linearization:Liminf} and Lemma \ref{lma:Linearization:Limsup} respectively.
\end{proof}

\begin{lemma}\label{lma:Linearization:Liminf}
    Let $\bu \in L^{2}(\Omega;\bbR^d)$.   For every sequence $\bu_{\veps}$ converging to $\bu$ strongly in $L^2(\bbR^d)$, we have
    \begin{equation*}
        \overline{E}_{0}(\bu) \leq \liminf_{\veps \to 0} \overline{E}_{\veps}(\bu_{\veps})\,.
    \end{equation*}
\end{lemma}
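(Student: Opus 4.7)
The plan is to apply Fatou's lemma after extracting a pointwise a.e.\ convergent subsequence. Upon passing to a subsequence (not relabeled) we may assume $\liminf_\veps \overline{E}_\veps(\bu_\veps)$ is attained as a limit and $\bu_\veps \to \bu$ pointwise a.e.\ in $\Omega$; the conclusion is trivial if that limit is $+\infty$. Since $\bl \in L^2$ and $\bu_\veps \to \bu$ strongly in $L^2$, the linear term passes to the limit, so the claim reduces to
\begin{equation*}
\frac{1}{2}\iintdm{\Omega}{\Omega}{\rho(\by-\bx)(\cD\bu\cdot\cD\bi)^2}{\by}{\bx} \leq \liminf_{\veps \to 0}\frac{1}{\veps^2}\iintdm{\Omega}{\Omega}{w(\by-\bx,\wt{s}_m(\cD\bi,\veps\cD\bu_\veps))}{\by}{\bx}.
\end{equation*}

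The a.e.\ convergence of $\bu_\veps$ on $\Omega$ yields $\cD\bu_\veps(\by,\bx) \to \cD\bu(\by,\bx)$ at a.e.\ $(\bx,\by) \in \Omega \times \Omega$, so at such a point $\{\cD\bu_\veps(\by,\bx)\}$ is eventually contained in some bounded set $K \subset \bbR^d$. Writing $\bsxi = \by-\bx$, $\bszeta_\veps = \cD\bu_\veps(\by,\bx)$, and $\bszeta = \cD\bu(\by,\bx)$, I would combine the uniform-in-$\bszeta$ Lagrange remainder from Lemma~\ref{lma:NonlinearStrain:Taylor} with the computation in the proof of Corollary~\ref{pointwise-limit-density} and the continuity of $\bszeta \mapsto (\bszeta \cdot \bsxi/|\bsxi|)^2$ to establish the pointwise limit
\begin{equation*}
\frac{1}{\veps^2} w(\by-\bx,\wt{s}_m(\cD\bi,\veps\cD\bu_\veps)) \longrightarrow \frac{1}{2}\rho(\by-\bx)(\cD\bu \cdot \cD\bi)^2 \quad \text{as } \veps \to 0,\ \text{a.e.\ in } \Omega \times \Omega.
\end{equation*}
Since the integrands on the left are nonnegative, Fatou's lemma yields the required inequality. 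In the case $\bu \notin \cX_\rho(\Omega)$ the right-hand side is $+\infty$, and the same Fatou argument forces $\liminf_\veps \overline{E}_\veps(\bu_\veps) = +\infty = \overline{E}_0(\bu)$.

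I expect the main technical subtlety to be establishing the pointwise limit above when both $\veps$ and $\bszeta_\veps$ vary simultaneously: Corollary~\ref{pointwise-limit-density} is stated for fixed $\bszeta$, so one must use the uniform control of the intermediate point $\veps^* = \veps^*(\veps,\bsxi,\bszeta)$ over bounded $\bszeta$ from Lemma~\ref{lma:NonlinearStrain:Taylor} together with continuity of $\partial^2\Psi/\partial s^2(\bsxi,\cdot)$ at $0$ to handle the joint limit $(\veps,\bszeta_\veps) \to (0,\bszeta)$ at almost every $(\bx,\by)$. Once this step is in place, the application of Fatou and the passage in the linear term are routine.
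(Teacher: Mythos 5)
Your proposal is correct and follows essentially the same route as the paper's proof: extract a.e.\ convergence of the difference quotients from the strong $L^2$ convergence, use Lemma \ref{lma:NonlinearStrain:Taylor} and the computation behind Corollary \ref{pointwise-limit-density} to get the pointwise limit of the rescaled integrand, apply Fatou's lemma, and pass to the limit in the linear load term by $L^2$-continuity, with the case $\bu \notin \cX_\rho(\Omega)$ handled exactly as you describe. Your explicit treatment of the joint limit $(\veps,\bszeta_\veps)\to(0,\bszeta)$ is a point the paper leaves implicit, but it is the same argument, not a different one.
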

\begin{proof}
There exists a measure zero set $N\subset \Omega\times\Omega$ such that for all $(\bx, \by)\in (\Omega\times\Omega)\setminus N$
\[
\cD\bu_{\veps}(\by, \bx)=\frac{\bu_\veps(\by)-\bu_\veps(\bx)}{|\by-\bx|} \to \frac{\bu(\by)-\bu(\bx)}{|\by-\bx|}=\cD\bu(\by, \bx) 
\]
as $\veps\to 0.$ This means that for a fixed $(\bx,\by)\in (\Omega\times\Omega)\setminus N$, the sequence of vectors $\cD\bu_\veps(\by, \bx)$ remains a bounded set in $\bbR^d.$
After noting that 
\[s_m[\bi + \veps \bu_{\veps}](\by,\bx) = \wt{s}_m(\cD\bi(\by, \bx),\veps\,\cD\bu_\veps(\by,\bx)),\]
 by Corollary \ref{pointwise-limit-density} (or directly \eqref{loc-limit-fixed-xieta}),  we have that for a fixed $(\bx,\by)\in (\Omega\times \Omega)\setminus N$,  
 \[
\lim_{\veps\to0}\frac{1}{\veps^2} w(\by-\bx, s_m[\bi + \veps \bu_{\veps}](\by,\bx)) = \rho(\by-\bx) \left(\cD\bu(\by, \bx) \cdot \cD \bi(\by,\bx)\right)^2. 
 \]
Since $w$ is nonnegative, integrating and applying Fatou's lemma we have 
\[
\liminf_{\veps\to 0}{1\over \veps^2}\int_{\Omega}\int_{\Omega}w(\by-\bx, s_m[\bi + \veps \bu_{\veps}](\by,\bx))\, \rmd \by \, \rmd \bx \geq \int_{\Omega}\int_{\Omega} \rho(\by-\bx) \left(\cD\bu(\by, \bx) \cdot \cD \bi(\by,\bx)\right)^2\, \rmd \by \, \rmd \bx. 
\]
Notice that the right hand side is finite only when $\bu\in \cX_\rho(\Omega)$ by definition \eqref{dot-space}. When $\bu\in L^{2}(\Omega;\bbR^d)\setminus \cX_\rho(\Omega)$, the right hand side becomes $\infty$ and so is the left hand side of the inequality.  
Finally, we use the $L^2$-continuity of the linear map $\bu \mapsto \int_{\Omega}\bl(\bx)\cdot \bu(\bx)\, \rmd \bx$ to conclude 
\[\liminf_{\veps\to 0} \overline{E}_{\veps}(\bu_\veps) \geq \overline{E}_0(\bu). \]
\end{proof}
\begin{lemma}\label{lma:Linearization:Limsup}
    For any $\bu\in L^2(\Omega;\bbR^d)$, 
    \begin{equation*}
        \GammalimsupL2_{\veps \to 0} \overline{E}_\veps(\bu) \leq \overline{E}_0(\bu)\,.
    \end{equation*}
\end{lemma}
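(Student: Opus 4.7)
The plan is to construct a recovery sequence first for smooth $\bu$ via the constant sequence $\bu_\veps\equiv\bu$ and dominated convergence, then to extend to general $\bu\in\cX_\rho(\Omega)$ using the density result Theorem~\ref{global-boundary-approximation} together with the automatic $L^2$-lower semicontinuity of the $\Gamma$-limsup. If $\bu\in L^2(\Omega;\bbR^d)\setminus\cX_\rho(\Omega)$ there is nothing to prove, so I restrict attention to $\bu\in\cX_\rho(\Omega)$.

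First I would treat $\bu\in C^\infty(\overline{\Omega};\bbR^d)$ with the constant recovery sequence $\bu_\veps\equiv\bu$, which trivially satisfies $\bu_\veps\to\bu$ in $L^2$. Since $\bu$ is Lipschitz, $\cD\bu(\by,\bx)$ stays in a bounded set $K\subset\bbR^d$ uniformly in $(\by,\bx)\in\Omega\times\Omega$. Writing $s_m[\bi+\veps\bu](\by,\bx)=\wt{s}_m(\cD\bi(\by,\bx),\veps\,\cD\bu(\by,\bx))$ and applying Corollary~\ref{pointwise-limit-density} with this $K$ simultaneously yields the a.e.\ pointwise limit
\[
\frac{1}{\veps^2} w(\by-\bx,s_m[\bi+\veps\bu](\by,\bx)) \;\to\; \rho(\by-\bx)(\cD\bu(\by,\bx)\cdot\cD\bi(\by,\bx))^2
\]
and an $\veps$-uniform envelope $C\,k(\by-\bx)$. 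Because assumptions (iii)–(iv) force $k$ to be comparable to $\rho$ (two-sided), and $\rho\in L^1_{loc}(\bbR^d)$ with $\Omega$ bounded, Fubini gives $(\by,\bx)\mapsto k(\by-\bx)\in L^1(\Omega\times\Omega)$. Dominated convergence then produces $E_\veps(\bu)\to E_0(\bu)$, establishing the upper bound on smooth fields.

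For general $\bu\in\cX_\rho(\Omega)$ I would invoke Theorem~\ref{global-boundary-approximation} to pick $\{\bu_n\}\subset C^\infty(\overline{\Omega};\bbR^d)$ with $\bu_n\to\bu$ in the $\cX_\rho(\Omega)$-norm; in particular $\bu_n\to\bu$ in $L^2$ and $[\bu_n-\bu]_{\cX_\rho(\Omega)}\to 0$. Since the quadratic part of $E_0$ is generated by a symmetric bilinear form, polarization gives $[\bu_n]_{\cX_\rho(\Omega)}^2\to[\bu]_{\cX_\rho(\Omega)}^2$, and $L^2$-continuity of the loading term then yields $\overline{E}_0(\bu_n)\to\overline{E}_0(\bu)$. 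Appealing to the general property that $\GammalimsupL2_{\veps\to 0}\overline{E}_\veps$ is $L^2$-lower semicontinuous and applying the smooth case to each $\bu_n$:
\[
\GammalimsupL2_{\veps\to 0}\overline{E}_\veps(\bu) \;\leq\; \liminf_{n\to\infty}\GammalimsupL2_{\veps\to 0}\overline{E}_\veps(\bu_n) \;\leq\; \liminf_{n\to\infty}\overline{E}_0(\bu_n) \;=\; \overline{E}_0(\bu).
\]

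The main obstacle I expect is the dominated convergence step for smooth $\bu$: one must verify carefully that the envelope supplied by Corollary~\ref{pointwise-limit-density} is truly an $L^1(\Omega\times\Omega)$ majorant (which reduces, via the comparability of $k$ and $\rho$ and Fubini, to $\rho\in L^1_{loc}$), and that the smallness regime $|s|<\delta_0$ required by assumption (iv) is activated for all sufficiently small $\veps$. Both are genuine but ultimately routine consequences of the Lipschitz character of smooth test fields, and they are the essential hypothesis-checking step underpinning the whole argument.
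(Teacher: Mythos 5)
Your proposal is correct and follows essentially the same route as the paper: a constant recovery sequence plus dominated convergence (justified via Corollary \ref{pointwise-limit-density} and the comparability of $k$ and $\rho$) for Lipschitz/smooth fields, followed by the density result of Theorem \ref{global-boundary-approximation}, continuity of $E_0$ under strong $\cX_\rho$-convergence, and lower semicontinuity of the $\Gamma$-limsup. The only differences are cosmetic — you make explicit the $L^1$ majorant and the polarization argument that the paper leaves implicit.
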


\begin{proof}
The inequality is trivial if $\bu\in L^{2}(\Omega;\bbR^d)\setminus \cX_\rho(\Omega)$, so we assume that $\bu \in \cX_\rho(\Omega)$. We first assume additionally that $\bu \in W^{1,\infty}(\Omega;\bbR^d)$. Set $M := \Vnorm{\bu}_{W^{1,\infty}(\Omega)}$. Then $|\cD[\bu](\bx,\by)| \leq M$ for almost every $(\bx,\by) \in \Omega \times \Omega$.
    Then, by Corollary \ref{pointwise-limit-density}, $\overline{E}_{\veps}(\bu) < \infty$ 
    for $\veps > 0$ small enough and 
    by the dominated convergence theorem
    \begin{equation*}
        \lim\limits_{\veps \to 0} E_\veps(\bu) = E_0(\bu)\,.
    \end{equation*}
    For the general case $\bu \in \cX_\rho(\Omega)$, by Theorem \ref{global-boundary-approximation}, there exists a sequence $\{ \bu_k \} \subset W^{1,\infty}(\Omega;\bbR^d)$ such that $\bu_k \to \bu$ strongly in $\cX_\rho (\Omega)$, and so by the lower semicontinuity of the $\Gamma$-$\limsup$ and the continuity of $E_0$ with respect to strong convergence in $\cX_\rho(\Omega;\bbR^d)$
    \begin{equation*}
        \GammalimsupL2_{\veps \to 0} E_\veps(\bu) \leq \liminf_{k \to \infty} \left( \GammalimsupL2_{\veps \to 0} E_\veps(\bu_k) \right) \leq \liminf_{k \to \infty} E_0(\bu_k) = E_0(\bu)\,.
    \end{equation*}
\end{proof}

\section{The localization limit}\label{sec:Loc}

In this section, we prove the first part of Theorem \ref{thm:Intro:GammaConvergence}. That is, we prove that the $\Gamma$-limit of the sequence of functionals $\overline{F}_n$ has an integral representation.  Throughout the section we assume that $\rho_n$ is a sequence satisfying \eqref{eq:AssumptionsOnKernels1} and that $\Phi$ 
satisfies \eqref{eq:PDEnergy:ConditionsOnG}.

Let $\Omega \subset \bbR^d$ be a bounded domain and define $\cA_0(\Omega) := \{\Omega' \subset \Omega \, : \, \Omega' \text{ is open}\}$. Given $A \in \cA_0(\Omega)$, we introduce the localized energy
\begin{equation*}
	F_n(\bv,A) := \int_A \int_{A} \rho_n(\bx-\by) \Phi(|s[\bv](\bx,\by)|) \, \rmd \by \, \rmd \bx\,. 
\end{equation*}
Notice that $F_{n}(\bv, \Omega) = F_{n}(\bv)$, as defined in \eqref{p-growing-nl-energy}.  
Extend the energies $F_n(\cdot,A)$ to all of $L^{p}(\Omega;\bbR^d)$ by defining 
$\overline{F}_n : L^{p}(\Omega; \bbR^d) \times \cA_0(\Omega) \to \overline{\bbR}$ as
\begin{equation*}
	\overline{F}_n(\bv,A) = 
	\begin{cases}
		F_n(\bv,A)\,, & \text{ if } \bv \in \mathfrak{W}^{\rho_n,p}(\Omega;\bbR^d); \\
		+\infty\,, & \text{ otherwise. }
	\end{cases}
\end{equation*}
Note that this is a natural definition of the extended energies on $L^{p}(\Omega;\bbR^d)$, since by definition of $s[\bv](\by,\bx)$
and by \eqref{eq:PDEnergy:ConditionsOnG}
\begin{equation}\label{eq:CoercivityOfEnergy}
	\begin{split}
		[\bv]_{\mathfrak{W}^{\rho_n,p}(A)}^{p} \leq C(p) ( F_n(\bv,A) + |A| ).
	\end{split}
\end{equation}
By the compactness property of $\Gamma$-convergence {\cite[Chapter 7]{braides1998homogenization}}, there exists a subsequence, not relabeled, and a lower semicontinuous functional  $\overline{F}_{\infty} : L^{p}(\Omega;\bbR^d) \to \overline{\bbR}$ such that $\overline{F}_n$ $\Gamma$-converges in the strong $L^{p}(\Omega;\bbR^{d})$ topology to $\overline{F}_\infty$. 
We do not know how to explicitly compute $\overline{F}_\infty$, but we can prove that this limiting functional has an integral representation. Inspired by \cite[Theorem 3.1 and 3.3]{alicandro2004general}, we prove the following preliminary ``local version'' of Theorem \ref{thm:Intro:GammaConvergence}, which gives the $\Gamma$-limit representation for any set $A \in \cA_0(\Omega)$. 
\begin{theorem}[Local $\Gamma$-convergence]\label{thm:LocalGammaConvergence}
	There exists a quasiconvex Carath\'eodory function $f_\infty : \bbR^{d \times d} \to \bbR$ satisfying the growth condition
	\begin{equation*}
		0 \leq f_\infty(\bbF) \leq c_2 (|\bbF|^{p} + 1)
	\end{equation*}
	such that with respect to the strong topology on $L^{p}(\Omega;\bbR^d)$
	\begin{equation*}
		\GammalimLp_{n \to \infty} \overline{F}_n(\bv,A) =  \overline{F}_\infty(\bv,A)
	\end{equation*}
    holds for any $A \in \cA_0(\Omega)$, where $\overline{F}_\infty(\cdot,A) : L^{p}(\Omega;\bbR^d) \to \overline{\bbR}$ is defined as
	\begin{equation*}
		\overline{F}_\infty(\bv,A) := \begin{cases}
			\intdm{A}{f_\infty(\grad \bv(\bx))}{\bx}\,, & \bv \in W^{1,p}(\Omega;\bbR^d)\\
			+\infty\,, & \text{ otherwise. }
		\end{cases}
	\end{equation*}
\end{theorem}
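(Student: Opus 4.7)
The plan is to follow the classical localization method for deriving integral representations of $\Gamma$-limits, in the spirit of \cite{alicandro2004general} and the Buttazzo--Dal Maso integral representation theorem. I begin by invoking the compactness property of $\Gamma$-convergence to extract a (not relabeled) subsequence and a candidate limit set function $\overline{F}_\infty(\bv,A)$ defined on $L^p(\Omega;\bbR^d)\times \cA_0(\Omega)$. The global growth bound \eqref{eq:PDEnergy:ConditionsOnG} together with \eqref{eq:CoercivityOfEnergy} and the Bourgain--Brezis--Mironescu characterization of $W^{1,p}$ immediately gives the domain inclusion $\{\overline{F}_\infty(\cdot,A)<\infty\}\subseteq W^{1,p}(A;\bbR^d)$, together with the upper estimate $\overline{F}_\infty(\bv,A)\le C\int_A(|\grad \bv|^p+1)\,\rmd \bx$ obtained by testing against the constant sequence $\bv_n=\bv$ for smooth $\bv$ and using the density of smooth maps.

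Next, I would prove that $A\mapsto \overline{F}_\infty(\bv,A)$ is the restriction to $\cA_0(\Omega)$ of a Borel measure, via the De Giorgi--Letta criterion. Locality and inner regularity are standard; the crux is the \textbf{fundamental estimate}: for every $A'\Subset A$, $B\in \cA_0(\Omega)$, and every pair of recovery sequences $\bv_n\to \bv$ on $A$ and $\bw_n\to \bv$ on $B$, there exists a joined sequence $\bz_n\to \bv$ on $A'\cup B$ with
\begin{equation*}
\limsup_{n\to\infty} F_n(\bz_n,A'\cup B)\le \limsup_{n\to\infty}F_n(\bv_n,A)+\limsup_{n\to\infty}F_n(\bw_n,B)+\eta\,\|\bv\|_{W^{1,p}}^p\,.
\end{equation*}
To construct $\bz_n$ I would interpolate via a smooth cutoff $\varphi$ supported in a thin annular collar between $A'$ and $A$, setting $\bz_n:=\varphi \bv_n+(1-\varphi)\bw_n$. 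The cross terms and the interaction terms across $\p A'$ are controlled by convexity of $\Phi$ combined with the concentration property \eqref{eq:AssumptionsOnKernels1} of $\rho_n$: interactions at range $\geq \delta$ are asymptotically negligible, and for short-range interactions Jensen-type bounds on $\Phi$ give a contribution proportional to $\|\bv_n-\bw_n\|_{L^p(A\cap B)}^p$, which vanishes as $n\to\infty$. This will simultaneously yield subadditivity and, combined with the obvious superadditivity on disjoint sets, measure-theoretic additivity.

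Once $\overline{F}_\infty(\bv,\cdot)$ is known to be a Borel measure absolutely continuous with respect to Lebesgue measure (by the upper bound) and bounded below by $0$, I would apply the Buttazzo--Dal Maso integral representation theorem. The hypotheses are: measure structure in $A$, locality, lower semicontinuity in $\bv$ with respect to $L^p$, and translation invariance in the target (the latter from the invariance of $s[\bv]$ under $\bv\mapsto \bv+\mathrm{const}$). The theorem produces a Carath\'eodory density $g(\bx,\bv,\bbF)$; one then verifies that $g$ does not depend on $\bx$ or $\bv$ using the translation and frame invariance of the energies $F_n$ to obtain $f_\infty(\bbF)$. Quasiconvexity follows from the standard observation that $\bv\mapsto \int_A f_\infty(\grad \bv)\rmd \bx$ must be $L^p$-lower semicontinuous on $W^{1,p}$ (as a $\Gamma$-limit), which by Acerbi--Fusco forces $f_\infty$ to be quasiconvex. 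The growth bound $f_\infty(\bbF)\le c_2(|\bbF|^p+1)$ is read off from testing $\overline{F}_\infty(\bb{F}\bx,Q)$ on a unit cube $Q$ using $\bv_n=\bbF\bx$.

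The main obstacle I anticipate is the \emph{fundamental estimate}: the nonlocal interactions of the energy force one to handle bond pairs $(\bx,\by)$ where one endpoint lies in $A'$ and the other outside, so the cutoff construction must be compatible with the vanishing range $\delta$ of $\rho_n$. The convexity of $\Phi$ and the bond-decomposition $s[\varphi \bv+(1-\varphi)\bw]\le \varphi s[\bv]+(1-\varphi)s[\bw]+C|\bv-\bw|/|\bx-\by|$ together with \eqref{eq:AssumptionsOnKernels1} will be the analytic tools; the remaining argument, removing the subsequence via the Urysohn property for $\Gamma$-convergence, is routine since the limit is uniquely determined by $f_\infty$.
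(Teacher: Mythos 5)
Your overall strategy coincides with the paper's: extract a $\Gamma$-convergent subsequence by compactness, establish the upper and lower growth bounds via the Bourgain--Brezis--Mironescu characterization, verify the De Giorgi--Letta criterion (locality, inner regularity, sub/superadditivity) through a cutoff-based fundamental estimate, and then invoke an integral representation theorem of Buttazzo--Dal Maso/Alicandro--Cicalese type, with quasiconvexity and $\bx$-independence read off from lower semicontinuity and translation invariance. So the architecture is sound and matches the paper.

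There is, however, a genuine gap in your treatment of the fundamental estimate, which is exactly the step you flag as the main obstacle. After writing $\bz_n = \varphi \bv_n + (1-\varphi)\bw_n$ and expanding
$\cD \bz_n(\bx,\by) = \varphi(\by)\cD\bv_n(\bx,\by) + (1-\varphi(\by))\cD\bw_n(\bx,\by) + (\bv_n(\bx)-\bw_n(\bx))\cD\varphi(\bx,\by)$,
the excess energy over the transition region splits into \emph{three} kinds of contributions, and you only account for two of them. The long-range bonds ($|\bx-\by|\geq\delta$) are indeed killed by \eqref{eq:AssumptionsOnKernels1}, and the cutoff term is indeed controlled by $\Vnorm{\bv_n-\bw_n}_{L^p}^p$ times the Lipschitz constant of $\varphi$, which vanishes. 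But the short-range bonds with both endpoints near the collar also produce the term
\begin{equation*}
\iint_{\text{collar}\times\text{collar}} \rho_n(\bx-\by)\bigl(1 + |\cD\bv_n(\bx,\by)|^{p} + |\cD\bw_n(\bx,\by)|^{p}\bigr)\, \rmd \by\, \rmd \bx\,,
\end{equation*}
which is \emph{not} proportional to $\Vnorm{\bv_n-\bw_n}_{L^p}^p$ and does not vanish as $n\to\infty$: it is only uniformly bounded by the (equibounded) seminorms $[\bv_n]_{\frak{W}^{\rho_n,p}}^p$ and $[\bw_n]_{\frak{W}^{\rho_n,p}}^p$. With a single fixed cutoff this term is $O(1)$, and the resulting inequality $F''(\bv,A'\cup B')\leq F''(\bv,A)+F''(\bv,B)+O(1)$ is useless for De Giorgi--Letta. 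The paper closes this by De Giorgi slicing: it builds $M$ nested layers $A_0=A'\Subset A_1\Subset\cdots\Subset A_M\subset A$ with cutoffs $\varphi_k$, observes that the near-diagonal transition sets $S_{2,k}$ have bounded overlap (each point lies in at most $8$ of them), and uses a pigeonhole/averaging argument to select an $n$-dependent layer $k(n)$ for which this troublesome term is at most $C_0/(M-4)$; sending $n\to\infty$ and then $M\to\infty$ gives exact subadditivity. Your sketch should be amended to include this layer-selection argument (or an equivalent averaging device); without it the measure property, and hence the integral representation, does not follow. The rest of your plan --- superadditivity on disjoint sets via the tail condition on $\rho_n$, locality, the application of the representation theorem, and the removal of the subsequence by uniqueness of the limit density --- is consistent with what the paper does.
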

The proof of the theorem follows a general framework wherein we establish sufficient conditions that allow for the application of the De Georgi-Letta measure criterion, and thus conclude that $\overline{F}_\infty(\bv,\cdot)$ is a restriction of a Borel measure on $\cA_0(\Omega)$ for any fixed $\bv\in W^{1,p}(\Omega;\bbR^d)$. The general integral representation theorem that contains the required sufficient conditions to be proven is the following:
\begin{theorem}{\cite[Theorem 2.2]{alicandro2004general}}\label{thm:ACCriterion}
	Let $1 \leq p < \infty$, and let $\cF : W^{1,p}(\Omega;\bbR^d) \times \cA_0(\Omega) \to [0,\infty]$ be a functional that satisfies the following:
	\begin{enumerate}[i)]
		\item $\cF(\bv,A) = \cF(\bw,A)$ if $\bv = \bw$ on $A \in \cA_0(\Omega)$.
		\item For every $\bv \in W^{1,p}(\Omega;\bbR^d)$ the function $\cF(\bv,\cdot)$ is the restriction of a Borel measure to $\cA_0(\Omega)$.
		\item There exist $C > 0$ and $a \in L^1(\Omega)$ such that
		\begin{equation*}
			\cF(\bv,A) \leq C \intdm{A}{(a(\bx)+|\grad \bv|^p)}{\bx}
		\end{equation*}
		for every $\bv \in W^{1,p}(\Omega;\bbR^d)$ and every $A \in \cA_0(\Omega)$.
		\item $\cF(\bv,A) = \cF(\bv+\bz,A)$ for every $\bz \in \bbR^d$, $\bv \in W^{1,p}(\Omega;\bbR^d)$ and $A \in \cA_0(\Omega)$
		\item $\cF(\cdot,A)$ is lower semicontinuous with respect to weak convergence in $W^{1,p}(\Omega;\bbR^d)$, for every $A\in \cA_0(\Omega)$.
	\end{enumerate}
	Then there exists a Carath\'eodory function $f_0 : \Omega \times \bbR^{d \times d} \to [0,\infty)$ satisfying the growth condition
	\begin{equation*}
		0 \leq f_0(\bx,\bbF) \leq C(a(\bx) + |\bbF|^p)
	\end{equation*}
	for all $\bx \in \Omega$ and $\bbF \in \bbR^{d \times d}$ such that $f(x, \cdot)$ is quasiconvex and 
	\begin{equation*}
		\cF(\bv,A) = \intdm{A}{f_0(\bx,\grad \bv(\bx))}{\bx}
	\end{equation*}
	for all $\bv \in W^{1,p}(\Omega;\bbR^d)$ and $A \in \cA(\Omega)$.
	If in addition 
	\begin{enumerate}[i), resume]
		\item $\cF(\bbF \bx,B(\bx_0,r)) = \cF(\bbF \bx,B(\by_0,r))$
		for every $\bbF \in \bbR^{d \times d}$ and $\bx_0$, $\by_0 \in \Omega$ and $r > 0$ such that $B(\bx_0,r) \cup B(\by_0,r) \subset \Omega$
	\end{enumerate}
	then $f_0$ does not depend on $\bx$.
\end{theorem}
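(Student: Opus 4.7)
The plan is to apply the integral-representation criterion of Theorem \ref{thm:ACCriterion} to the functional $\cF(\bv, A) := \overline{F}_\infty(\bv, A)$, defined as the strong-$L^p$ $\Gamma$-limit of $\overline{F}_n(\bv, A)$. A preliminary diagonal argument (using the Kuratowski compactness of $\Gamma$-convergence on a countable base for $\cA_0(\Omega)$) lets us pass to a single subsequence, not relabeled, for which the $\Gamma$-limit exists simultaneously on every $A \in \cA_0(\Omega)$. Once conditions (i)--(vi) are verified, the theorem yields a quasiconvex Carath\'eodory density $f_0(\bx, \bbF)$ obeying the stated growth bound; condition (vi) then upgrades $f_0$ to a density $f_\infty(\bbF)$ independent of $\bx$, while nonnegativity is automatic because each $F_n \geq 0$.

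Several hypotheses follow directly from the structure of $F_n$. Locality (i) and target-translation invariance (iv) are immediate from $F_n(\bv, A) = \iintdm{A}{A}{\rho_n(\bx-\by)\Phi(|s[\bv]|)}{\by}{\bx}$ and the identity $s[\bv + \bz] = s[\bv]$. Weak $W^{1,p}$-lower-semicontinuity (v) follows from the strong-$L^p$-lower-semicontinuity of the $\Gamma$-limit combined with Rellich--Kondrachov compactness. Spatial homogeneity on affine maps (vi) follows because $\rho_n(\bx-\by)$ and $s[\bbF \bx](\by,\bx) = \left|\bbF (\by-\bx)/|\by-\bx|\right| - 1$ are both translation-invariant in $\bx$. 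For the upper bound (iii) with $a\equiv 1$, we apply the $\Gamma$-$\limsup$ inequality to the constant recovery sequence $\bv_n \equiv \bv$ for smooth $\bv$; the growth $\Phi(a) \leq C_1(1+a^p)$ together with the Bourgain--Brezis--Mironescu limit $\limsup_n [\bv]_{\mathfrak{W}^{\rho_n,p}(A)}^p \leq C \intdm{A}{|\grad \bv|^p}{\bx}$ yield $\overline{F}_\infty(\bv, A) \leq C(|A| + \intdm{A}{|\grad \bv|^p}{\bx})$, which extends to all of $W^{1,p}(\Omega;\bbR^d)$ by strong $W^{1,p}$-density of smooth functions and the lower semicontinuity of the $\Gamma$-$\limsup$.

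The main obstacle is condition (ii), that $\overline{F}_\infty(\bv, \cdot)$ extends to a Borel measure on $\cA_0(\Omega)$. By the De Giorgi--Letta criterion it suffices to prove superadditivity on disjoint open sets, subadditivity, and inner regularity. Superadditivity is immediate: $F_n(\bv, A\cup B) \geq F_n(\bv, A) + F_n(\bv, B)$ for disjoint $A, B$, and combining the elementary inequality $\liminf(a_n+b_n) \geq \liminf a_n + \liminf b_n$ with the $\Gamma$-$\liminf$ bound gives $\overline{F}_\infty(\bv, A\cup B) \geq \overline{F}_\infty(\bv, A) + \overline{F}_\infty(\bv, B)$. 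Subadditivity and inner regularity both rest on a \emph{fundamental estimate} of De Giorgi type: given $U' \Subset U$ and $V$ open together with recovery sequences $\bv_n \to \bv$, $\bw_n \to \bv$ in $L^p$ attaining $\overline{F}_\infty(\bv, U)$ and $\overline{F}_\infty(\bv, V)$, one must build, for every $\eta > 0$, a Lipschitz cut-off $\varphi$ with $\varphi \equiv 1$ on $U'$ and $\supp \varphi \Subset U$ so that the glued sequence $\bz_n := \varphi \bv_n + (1-\varphi)\bw_n$ satisfies
\begin{equation*}
    \limsup_n F_n(\bz_n, U'\cup V) \leq \limsup_n F_n(\bv_n, U) + \limsup_n F_n(\bw_n, V) + \eta\,.
\end{equation*}
This is the hard step. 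One partitions the integration over $(U'\cup V)^2$ into pairs with both points in $U'$, both in $V\setminus \overline{U'}$, and pairs that straddle the transition zone $U\setminus U'$. On the first two parts $\bz_n$ agrees with $\bv_n$ or $\bw_n$; for straddling pairs the convexity and monotonicity of $\Phi$, together with the triangle estimate
\begin{equation*}
    |\bz_n(\by) - \bz_n(\bx)| \leq \varphi(\bx)|\bv_n(\by) - \bv_n(\bx)| + (1-\varphi(\bx))|\bw_n(\by) - \bw_n(\bx)| + \mathrm{Lip}(\varphi)\, |\by-\bx|\, |\bv_n(\by) - \bw_n(\by)|\,,
\end{equation*}
dominate the integrand by a convex combination of the strains of $\bv_n, \bw_n$ plus a remainder controlled by $\Vnorm{\bv_n - \bw_n}_{L^p(U\setminus U')}^p$, which vanishes because $\bv_n, \bw_n \to \bv$. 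Long-range straddling contributions are absorbed by the tail bound $\int_{|\bsxi| > \dist(U', U^c)} \rho_n \, \rmd \bsxi \to 0$ from \eqref{eq:AssumptionsOnKernels1}, and a De Giorgi-type averaging over a nested family of transition annuli absorbs the residual short-range error into $\eta$. From the fundamental estimate, subadditivity $\overline{F}_\infty(\bv, U\cup V) \leq \overline{F}_\infty(\bv, U) + \overline{F}_\infty(\bv, V)$ follows by letting $U' \nearrow U$, and inner regularity follows by exhausting an arbitrary $A \in \cA_0(\Omega)$ from within by sets $U' \Subset U \Subset A$. Theorem \ref{thm:ACCriterion} then delivers the claimed representation.
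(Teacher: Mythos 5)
The statement you were asked to prove is the abstract integral-representation criterion itself, quoted from \cite[Theorem 2.2]{alicandro2004general}: for \emph{any} functional $\cF : W^{1,p}(\Omega;\bbR^d) \times \cA_0(\Omega) \to [0,\infty]$ satisfying hypotheses i)--v) there exists a quasiconvex Carath\'eodory density $f_0$ with the stated $p$-growth representing $\cF$, and hypothesis vi) forces $f_0$ to be independent of $\bx$. The paper does not prove this result; it cites it and uses it as a black box in the proof of Theorem \ref{thm:LocalGammaConvergence}. Your proposal does not prove it either: everything you describe --- the diagonal subsequence, locality, translation invariance in the target, the upper bound via the Bourgain--Brezis--Mironescu limit, superadditivity, and the De Giorgi-type fundamental estimate yielding subadditivity and inner regularity --- is the verification that the specific $\Gamma$-limit $\overline{F}_\infty$ satisfies hypotheses i)--vi). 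That is, you have reconstructed (quite faithfully, in fact, matching Lemmas \ref{lma:BoundFromAbove}--\ref{locality} of the paper) the paper's proof of Theorem \ref{thm:LocalGammaConvergence}, not of Theorem \ref{thm:ACCriterion}. Your closing sentence, ``Theorem \ref{thm:ACCriterion} then delivers the claimed representation,'' invokes exactly the statement under examination, so as a proof of that statement the argument is circular and contributes nothing toward it.

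A proof of Theorem \ref{thm:ACCriterion} is different in kind from what you wrote. From ii) and iii) one shows the measure $\cF(\bv,\cdot)$ is absolutely continuous with respect to Lebesgue measure with density dominated by $C(a(\bx)+|\grad \bv(\bx)|^p)$; one then defines $f_0(\bx_0,\bbF)$ as a Radon--Nikodym derivative, $f_0(\bx_0,\bbF) = \limsup_{r \to 0} \cF(\bbF\,\cdot\,,B(\bx_0,r))/|B(\bx_0,r)|$ (equivalently via the blow-up or global method for relaxation in the spirit of Buttazzo--Dal Maso and Bouchitt\'e--Fonseca--Mascarenhas), proves the representation first for affine and piecewise affine functions using locality i) and translation invariance iv), extends it to all of $W^{1,p}(\Omega;\bbR^d)$ by density together with the growth bound iii) and the lower semicontinuity v), deduces quasiconvexity of $f_0(\bx,\cdot)$ from v) combined with the $p$-growth, verifies the Carath\'eodory measurability in $\bx$, and finally uses vi) to compare balls of equal radius centered at different points and conclude that $f_0$ does not depend on $\bx$. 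None of these steps appears in your write-up, so there is a genuine gap: you have outlined the theorem that \emph{applies} the criterion rather than the criterion itself. (If the intended target had been Theorem \ref{thm:LocalGammaConvergence}, your outline would be essentially the paper's argument, with the fundamental estimate playing the role of Lemma \ref{lma:Subadditivity}.)
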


To prepare for the proof of Theorem \ref{thm:LocalGammaConvergence}, we define the lower and upper $\Gamma$ limits of the functionals. Thoughout this section, we work on the subsequence (not relabeled) for which $\overline{F}_n$ $\Gamma$-converges to $\overline{F}_{\infty}$ for $A = \Omega$. By selecting a diagonal subsequence, we may even choose the subsequence in such a way that the convergence holds over any subset of $\Omega$ that belongs to a countable dense subcollection $\mathfrak{D}(\Omega)$ of $\cA_0(\Omega)$. \footnote{By a dense subcollection $\mathfrak{D}(\Omega)$ of $\cA_0(\Omega)$ we mean for any $A, B\in \cA_0(\Omega)$, such that $A\Subset B$ there exists $D_0\in \mathfrak{D}(\Omega)$ so that $A\Subset D\Subset B$. }
Now fix  
$A \in \cA_0(\Omega)$ and define the lower $\Gamma$-limit $F'$ and upper $\Gamma$-limit $F''$ as
\begin{equation*}
	\begin{split}
		F'(\bv,A) := \GammaliminfLp_{n \to \infty} \overline{F}_n(\bv,A) &= \inf \left\{ \liminf_{n \to \infty} \overline{F}_n(\bv_n,A) \, : \, \bv_n \to \bv \text{ in } L^{p}(\Omega;\bbR^d) \right\}\,, \\
		F''(\bv,A) := \GammalimsupLp_{n \to \infty} \overline{F}_n(\bv,A) &= \inf \left\{ \limsup_{n \to \infty} \overline{F}_n(\bv_n,A) \, : \, \bv_n \to \bv \text{ in } L^{p}(\Omega;\bbR^d) \right\}\,.
	\end{split}
\end{equation*}
Then $\overline{F}_\infty(\bv) = F'(\bv,\Omega)= F''(\bv,\Omega)$. Moreover, for any $A\in \mathfrak{D}_0(\Omega)$, we have
\begin{equation*}
 \overline{F}_\infty(\bv,A) = F'(\bv,A)=F''(\bv,A). 
\end{equation*}
The next lemma gives a lower bound for $F'$ for any $A\in \cA_0(\Omega)$ and $\bv\in W^{1,p}(\Omega;\bbR^{d})$  such that $F'(\bv, A) < \infty$. In particular, it implies  that $\overline{F}_\infty(\bv) = + \infty$ whenever $\bv \notin W^{1,p}(\Omega;\bbR^d)$. 
\begin{lemma}\label{lma:BoundFromBelow}
	There exists a positive constant $C(d,p)$ such that for any $\bv \in L^{p}(\Omega;\bbR^d)$ and $A\in \cA_0(\Omega)$ with $F'(\bv,A) < \infty$ 
	\begin{equation*}
		F'(\bv,A) \geq C ( \Vnorm{\grad \bv}_{L^{p}(A;\bbR^d)}^p - |A| )\,.
	\end{equation*}
\end{lemma}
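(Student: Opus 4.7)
The plan is to leverage the $p$-growth of $\Phi$ to bound the nonlocal Gagliardo-type seminorm $[\cdot]_{\mathfrak{W}^{\rho_n,p}(A)}$ from above by the energy $F_n(\cdot,A)$, and then apply a Bourgain--Brezis--Mironescu type lower semicontinuity result to convert this uniform nonlocal bound into the desired $L^p$-bound on $\grad\bv$.

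First I fix a recovery-type sequence $\{\bv_n\}\subset \mathfrak{W}^{\rho_n,p}(\Omega;\bbR^d)$ with $\bv_n\to\bv$ in $L^p(\Omega;\bbR^d)$ and $\lim_n F_n(\bv_n,A)=F'(\bv,A)$; a diagonal argument allows me to arrange that each $\bv_n$ has finite energy. From the reverse triangle inequality $|\cD\bv_n(\bx,\by)|\leq |s[\bv_n](\bx,\by)|+1$ and convexity of $t\mapsto t^p$ I obtain the pointwise bound
\[
|\cD\bv_n(\bx,\by)|^p \leq 2^{p-1}\bigl(|s[\bv_n](\bx,\by)|^p+1\bigr).
\]
Combining with $\Phi(|s|)\geq C_0(|s|^p-1)$ from \eqref{eq:PDEnergy:ConditionsOnG}, multiplying by $\rho_n(\bx-\by)$, integrating over $A\times A$, and using $\int_{\bbR^d}\rho_n=1$ yields
\[
[\bv_n]^p_{\mathfrak{W}^{\rho_n,p}(A)} \leq C(p)\bigl(F_n(\bv_n,A) + |A|\bigr),
\]
which is a refinement of the coercivity estimate \eqref{eq:CoercivityOfEnergy}. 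Passing $n\to\infty$ gives $\liminf_n [\bv_n]^p_{\mathfrak{W}^{\rho_n,p}(A)} \leq C(p)(F'(\bv,A)+|A|)<\infty$.

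To translate this bound into one on $\|\grad\bv\|_{L^p(A)}^p$, I invoke the joint BBM lower semicontinuity theorem, in the form proved in \cite{ponce2004new}: whenever $\bv_n\to\bv$ in $L^p(A;\bbR^d)$ and $\liminf_n [\bv_n]^p_{\mathfrak{W}^{\rho_n,p}(A)}<\infty$ with the $\rho_n$ satisfying \eqref{eq:AssumptionsOnKernels1}, then $\bv\in W^{1,p}(A;\bbR^d)$ and
\[
\int_A\fint_{\mathbb{S}^{d-1}}|\grad\bv(\bx)\bseta|^p\,\rmd\sigma(\bseta)\,\rmd\bx \leq K(d,p)\liminf_{n\to\infty}[\bv_n]^p_{\mathfrak{W}^{\rho_n,p}(A)}.
\]
Since $\bbF\mapsto(\fint_{\mathbb{S}^{d-1}}|\bbF\bseta|^p\,\rmd\sigma)^{1/p}$ is a norm on $\bbR^{d\times d}$ and is therefore equivalent to the Frobenius norm, this produces $\|\grad\bv\|_{L^p(A)}^p\leq C(d,p)(F'(\bv,A)+|A|)$, which rearranges to the claimed inequality after absorbing constants.

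The main obstacle is justifying the joint liminf estimate in the second step: the BBM statement recalled in Section~\ref{sec:SMR} concerns only a \emph{fixed} $\bv$ tested against the varying kernels $\rho_n$, whereas here both the function $\bv_n$ and the kernel $\rho_n$ vary with $n$. This strengthened version is however standard under \eqref{eq:AssumptionsOnKernels1} and can be obtained either by a mollification-and-Fatou argument or by invoking directly the compactness/liminf results of \cite{ponce2004new} in the BBM framework. Once this ingredient is in place, the lemma follows from the two elementary estimates above.
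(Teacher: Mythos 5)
Your proposal is correct and follows essentially the same route as the paper: use the $p$-growth of $\Phi$ (i.e.\ the coercivity estimate \eqref{eq:CoercivityOfEnergy}) to bound $[\bv_n]^p_{\mathfrak{W}^{\rho_n,p}(A)}$ by the energies along a recovery sequence, then invoke the Bourgain--Brezis--Mironescu compactness/liminf result for varying functions and kernels to conclude $\bv\in W^{1,p}(A;\bbR^d)$ with the stated bound, finishing with equivalence of the norms $\bbF\mapsto(\fint_{\bbS^{d-1}}|\bbF\bsomega|^p\,\rmd\sigma)^{1/p}$ and $|\bbF|$. Your explicit acknowledgement that the ``joint'' liminf statement (both $\bv_n$ and $\rho_n$ varying) is what is actually needed is a point the paper passes over silently with its citation to \cite{BBM}, and your proposed justification is the standard one.
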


\begin{proof}
	Let $\bv_n$ be a sequence converging to $\bv$ strongly in $L^p(\Omega;\bbR^d)$. Then  using the $p$-growth condition on $\Phi,$ we have 
	\begin{equation*}
		\liminf_{n \to \infty} [\bv_n]_{\frak{W}^{\rho_n,p}(A)}^{p} \leq C  \liminf_{n \to \infty} F_n(\bv_n,A) + C|A| \,.
	\end{equation*}
	Since the sequence $\bv_n$ was arbitrary,
	\begin{equation*}
		\liminf_{n \to \infty} [\bv_n]_{\frak{W}^{\rho_n,p}(A)}^p \leq C  F'(\bv,A) + C |A| \,.
	\end{equation*}
 Since $F'(\bv,A) < \infty$ by assumption, we have that (up to subsequences) 
 \[
\sup_{n \in \bbN}[\bv_n]_{\frak{W}^{\rho_n,p}(A)}^p < \infty.
 \]
This implies that the sequence of vector fields $\{\bv_n\}$ is compact in $L^{p}(A;\bbR^d)$ with a limit vector field $\bv \in W^{1,p}(A;\bbR^d)$ (see \cite{BBM}) with the estimate
		\begin{equation*}
			\intdm{\Omega}{ \fint_{\bbS^{d-1}}  |\grad \bv(\bx) \bsomega|^{p}  \, \rmd \sigma(\bsomega)}{\bx} \leq C  I'(\bv,A) + C|A| \,.
	\end{equation*}
The conclusion of the lemma follows from the equivalence of the norms $ \left( \fint_{\bbS^{d-1}}  |\bbF \bsomega|^p  \, \rmd \sigma(\bsomega) \right)^{1/p}$ and $|\bbF|$ on $\bbR^{d \times d}$.
\end{proof}
Next, we will obtain a lower bound for $F''$.
\begin{lemma}\label{lma:BoundFromAbove}
	There exists a positive constant $C(d,p)$ such that for any $\bv \in W^{1,p}(A;\bbR^d)$ and $A\in \cA_0(\Omega)$ 
	\begin{equation*}
		F''(\bv,A) \leq C ( \Vnorm{\grad \bv}_{L^{p}(A)}^{p} + |A| )\,.
	\end{equation*}
\end{lemma}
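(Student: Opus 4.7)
The plan is to use the constant sequence $\bv_n := \bv$ as a recovery sequence; since it trivially converges to $\bv$ in $L^p(\Omega;\bbR^d)$, this gives $F''(\bv, A) \leq \limsup_{n \to \infty} F_n(\bv, A)$, and we are left to bound this limit superior. The first step is to reduce to a bound on the Bourgain–Brezis–Mironescu seminorm $[\bv]_{\frak{W}^{\rho_n,p}(A)}^{p}$. Using the growth estimate $\Phi(a) \leq C_1(1+|a|^p)$ from \eqref{eq:PDEnergy:ConditionsOnG}, the elementary inequality $||a|-1|^p \leq 2^{p-1}(|a|^p + 1)$, and the normalization $\int_{\bbR^d} \rho_n \, \rmd \bh = 1$, we obtain
\begin{equation*}
F_n(\bv, A) \leq C[\bv]_{\frak{W}^{\rho_n, p}(A)}^{p} + C|A|\,.
\end{equation*}

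For smooth $\bv \in C^\infty(\bbR^d;\bbR^d)$, the chord estimate $|\bv(\by)-\bv(\bx)|/|\by-\bx| \leq \int_0^1 |\grad \bv((1-t)\bx + t\by)| \, \rmd t$, combined with Jensen's inequality and the unit-Jacobian change of variables $(\bx, \by) \mapsto (\bz, \bh) = ((1-t)\bx + t\by, \by - \bx)$, yields
\begin{equation*}
[\bv]_{\frak{W}^{\rho_n, p}(A)}^{p} \leq \int_0^1 \intdm{\bbR^d}{\rho_n(\bh) \Vnorm{\grad \bv}_{L^p( A_{t,\bh})}^{p}}{\bh}\,\rmd t\,,
\end{equation*}
where $A_{t,\bh} := (A-t\bh) \cap (A+(1-t)\bh) \subset A + B(\boldsymbol{0},|\bh|)$. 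Splitting the $\bh$-integral at $|\bh| = \delta$ and using \eqref{eq:AssumptionsOnKernels1} to send the tail to zero, then letting $\delta \to 0$, gives the smooth estimate $\limsup_{n}[\bv]_{\frak{W}^{\rho_n,p}(A)}^{p} \leq \Vnorm{\grad \bv}_{L^p(A)}^p$, and hence $F''(\bv,A) \leq C(\Vnorm{\grad \bv}_{L^p(A)}^p + |A|)$.

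To pass to general $\bv \in W^{1,p}(A;\bbR^d)$, approximate by smooth $\bv_k \to \bv$ in $W^{1,p}(A;\bbR^d)$, obtained by Sobolev extension and mollification. Then $\bv_k \to \bv$ in $L^p$, and since $F''(\cdot, A)$, being a $\Gamma$-$\limsup$, is automatically sequentially lower semicontinuous with respect to strong $L^p$ convergence,
\begin{equation*}
F''(\bv, A) \leq \liminf_{k \to \infty} F''(\bv_k, A) \leq \liminf_{k \to \infty} C(\Vnorm{\grad \bv_k}_{L^p(A)}^p + |A|) = C(\Vnorm{\grad \bv}_{L^p(A)}^p + |A|)\,.
\end{equation*}

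The principal technical subtlety is that the chord argument requires $\bv$ (hence its gradient) to be defined on a small neighborhood of $A$, which is not automatic when $A$ is non-convex or fails to have a Lipschitz boundary. This is handled by working first with smooth functions defined on all of $\bbR^d$, for which the chord representation is immediate, and then invoking the lower-semicontinuity step above; if $A$ lacks Lipschitz regularity, one further approximates $A$ from within by Lipschitz subdomains on which Sobolev extensions exist, using monotonicity of the nonlocal integrals in the domain together with the concentration property \eqref{eq:AssumptionsOnKernels1} to conclude.
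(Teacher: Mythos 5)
Your proof is correct and follows essentially the same route as the paper's: the constant sequence $\bv_n=\bv$ as recovery sequence, the $p$-growth of $\Phi$ to reduce to the seminorm $[\bv]_{\frak{W}^{\rho_n,p}(A)}^{p}$, and the Bourgain--Brezis--Mironescu limit to bound that seminorm by $\Vnorm{\grad\bv}_{L^p(A)}^p$; the only difference is that you write out the chord/Jensen/change-of-variables computation and the smooth-approximation-plus-lower-semicontinuity step that the paper compresses into a citation of \cite{BBM}. One caution on your final remark about non-Lipschitz $A$: the exhaustion by Lipschitz subdomains must not secretly rely on inner regularity of $F''$, since Lemma \ref{lma:InnerMeasure} is itself proved using the present bound.
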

\begin{proof}
	By the $p$-growth condition \eqref{eq:PDEnergy:ConditionsOnG} on $\Phi$ and H\"older's inequality
	\begin{equation*}
		\begin{split}
		F_n(\bv,A) 
		&\leq  C_1 \int_A \int_{\Omega} \rho_n(\bx-\by) ( 1 + |s[\bv](\bx,\by)|^{p} ) \, \rmd \by \, \rmd \bx \\
		&\leq C \iintdm{A}{\Omega}{\rho_n(\bx-\by) \left| \cD[\bv](\bx,\by) \right|^{p} }{\by}{\bx} + C\iintdm{A}{\Omega}{\rho_n(|\bx-\by|)}{\by}{\bx} \\
		&\leq C \int_A \int_{\Omega} \rho_n(\by-\bx) \frac{|\bv(\by)-\bv(\bx)|^{p}}{|\by-\bx|^{p}} \, \rmd \by \, \rmd \bx + C |A|\,,
		\end{split}
	\end{equation*}
	where the constant $C$ is independent of $n$ and $\bv$.
	Using the same methods as in \cite{BBM}
	\begin{equation*}
		\lim_{n \to \infty} \int_A \int_{\Omega} \rho_n(\by-\bx) \frac{|\bv(\by)-\bv(\bx)|^{p}}{|\by-\bx|^{p}} \, \rmd \by \, \rmd \bx
		= \int_A \fint_{\bbS^{d-1}}  \left| \grad \bv(\bx) \cdot \bsomega \right|^{p} \, \rmd \sigma(\bsomega) \, \rmd \bx \leq \Vnorm{\grad \bv}_{L^{p}(A)}^{p}\,.
	\end{equation*}
	Thus
	\begin{equation*}
		\limsup_{n \to \infty} F_n(\bv,A) \leq C ( \Vnorm{\grad \bv}_{L^{p}(A)}^{p} + |A| )\,,
	\end{equation*}
	and the result follows from the definition of $F''$.
\end{proof}
We next prove the subadditivity of the upper $\Gamma$-limit. 
\begin{lemma}\label{lma:Subadditivity}
	Let $A$, $A'$, $B$, $B' \in \cA_0(\Omega)$ with $A' \Subset A$ and $B' \Subset B$. Then for any $\bv \in W^{1,{p}}(\Omega;\bbR^d)$
	\begin{equation*}
		F''(\bv,A' \cup B') \leq F''(\bv,A) + F''(\bv,B)\,.
	\end{equation*}
\end{lemma}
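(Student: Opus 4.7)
The plan is a De Giorgi-style slicing argument adapted to the nonlocal setting, using the kernel concentration \eqref{eq:AssumptionsOnKernels1} to kill long-range interaction terms. First, choose recovery sequences $\bu_n \to \bv$ and $\bw_n \to \bv$ in $L^p(\Omega;\bbR^d)$ with $\limsup_n F_n(\bu_n,A) = F''(\bv,A)$ and $\limsup_n F_n(\bw_n,B) = F''(\bv,B)$, both of which we may assume finite. Fix an integer $N$ and nested open sets $A' = A_0 \Subset A_1 \Subset \cdots \Subset A_N \Subset A$ with layers $T_i := A_i \setminus \overline{A_{i-1}}$ of uniform thickness $\sim \delta_0/N$, where $\delta_0 = \dist(A', \Omega \setminus A)$. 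Pick cutoffs $\varphi_i \in C^\infty_c(A_i;[0,1])$ with $\varphi_i \equiv 1$ on $A_{i-1}$ and $\|\nabla \varphi_i\|_\infty \leq CN/\delta_0$, and set $\bz_n^i := \varphi_i \bu_n + (1-\varphi_i) \bw_n$. Since $\bu_n, \bw_n \to \bv$ in $L^p$, so does $\bz_n^i$ for each $i$.

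A direct expansion gives the pointwise estimate
\begin{equation*}
|\cD \bz_n^i(\bx,\by)| \leq |\cD \bu_n(\bx,\by)| + |\cD \bw_n(\bx,\by)| + \|\nabla \varphi_i\|_\infty \, |\bu_n(\bx) - \bw_n(\bx)|,
\end{equation*}
so the $p$-growth of $\Phi$ yields $\Phi(|s[\bz_n^i](\bx,\by)|) \leq C(|\cD \bz_n^i(\bx,\by)|^p + 1)$. I would then split $(A'\cup B')^2$ into four pieces. Since $A' \subset A_{i-1}$, we have $T_i \cap A' = \emptyset$, so $T_i \cap (A' \cup B') \subset B' \subset B$. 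The ``$\bu_n$-bulk'' contribution over $((A'\cup B')\cap A_{i-1})^2$ equals the integrand with $\bz_n^i = \bu_n$ and is bounded by $F_n(\bu_n, A)$; the ``$\bw_n$-bulk'' contribution over $(B' \setminus A_i)^2$ is bounded by $F_n(\bw_n, B)$. The ``far cross'' contribution, from pairs with $\bx \in A_{i-1}$ and $\by \in B' \setminus A_i$, is supported on $\{|\bx-\by| \geq \dist(A_{i-1}, \Omega \setminus A_i) > 0\}$ and vanishes as $n\to\infty$ for each fixed $i$ by \eqref{eq:AssumptionsOnKernels1}. The remaining ``transition'' piece $R_n^i$, involving pairs with at least one endpoint in $T_i \cap B'$, is bounded via the pointwise estimate.

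Summing $R_n^i$ over $i=1,\ldots,N$ and using disjointness of the $T_i$'s, the $|\cD \bu_n|^p$ and $|\cD \bw_n|^p$ contributions are bounded above by $C([\bu_n]^p_{\frak{W}^{\rho_n,p}(A)} + [\bw_n]^p_{\frak{W}^{\rho_n,p}(B)}) + o(1)$, and hence uniformly in $n$ by the coercivity estimate \eqref{eq:CoercivityOfEnergy}. Here $T_i \subset A_N \Subset A$ ensures that any ``leakage'' to $\by \notin A$ or $\by \notin B$ is suppressed by the tail $\int_{|\bsxi|\geq c}\rho_n \to 0$, and the containment $T_i \cap (A'\cup B') \subset B' \subset B$ lets us invoke the $\bw_n$-recovery bound. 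The $(N/\delta_0)^p |\bu_n(\bx) - \bw_n(\bx)|^p$ term, after integrating in $\by$ using $\int \rho_n = 1$, is bounded by $C(N/\delta_0)^p \|\bu_n - \bw_n\|_{L^p(\Omega)}^p$, which vanishes as $n\to\infty$ for $N$ fixed. A pigeonhole in $i$ then produces $i^* = i^*(n) \in \{1,\ldots,N\}$ with $R_n^{i^*} \leq C/N + o(1)$, so
\begin{equation*}
F''(\bv, A'\cup B') \leq \limsup_n F_n(\bz_n^{i^*}, A' \cup B') \leq F''(\bv, A) + F''(\bv, B) + C/N.
\end{equation*}
Letting $N\to\infty$ concludes.

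The principal obstacle is the $(N/\delta_0)^p$ blow-up coming from the cutoff gradients: one must exploit the strong $L^p$-convergence of $\bu_n, \bw_n$ to $\bv$ to defeat it for each fixed $N$ before the pigeonhole is applied, which is why the overall argument is two-limit in structure. A secondary subtlety is the geometric requirement that both recovery sequences have controlled $\frak{W}^{\rho_n,p}$-seminorms on the transition region; this is arranged by forcing $A_N \Subset A$ and observing $T_i \cap (A'\cup B') \subset B$, with the long-range pairings in each of the remaining pieces absorbed by the tail of $\rho_n$.
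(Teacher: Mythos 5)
Your proposal is correct and follows essentially the same route as the paper's proof: the same slicing of the transition region into $N$ layers with cutoffs of gradient $\sim N/\delta_0$, the same glued sequence $\varphi_i \bu_n + (1-\varphi_i)\bw_n$, the same four-way decomposition (two bulk terms, a far-field cross term killed by the kernel concentration \eqref{eq:AssumptionsOnKernels1}, and a transition term handled by pigeonholing over the layer index using the uniform bound from \eqref{eq:CoercivityOfEnergy}), and the same two-limit order ($n\to\infty$ then $N\to\infty$) so that strong $L^p$-convergence of $\bu_n-\bw_n$ absorbs the cutoff-gradient blow-up. The only cosmetic difference is that the paper quantifies the overlap of the near-diagonal transition sets explicitly (each point lies in at most $8$ of them), where you appeal to disjointness of the layers; this is the same bounded-overlap mechanism.
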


\begin{proof}
Without loss of generality, we assume that $F''(\bv,A)$ and $ F''(\bv,B)$ are finite. 
	Let $q = \dist(A',\p A)$. Fix $M \in \bbN$, and for $k \in \{ 1, \ldots, M\}$ define
	\begin{equation*}
		A_k := \left\{ \bx \in A \, : \, \dist(\bx,A') < \frac{kq}{M} \right\}\,, \qquad A_0 = A'\,.
	\end{equation*}
	So $A_M \subset A$. For each $k$ define the cutoff function $\varphi_k \in C^{\infty}_c(\Omega)$ such that $\varphi_k \equiv 1$ in $A_{k-1}$, $0 \leq \varphi_k \leq 1$ on $A_k \setminus A_{k-1}$ and $\varphi_k \equiv 0$ on $\Omega \setminus A_k$, so that $\Vnorm{\grad \varphi_k}_{L^{\infty}(\Omega)} \leq \frac{2M}{q}$. Let $\bv_n$ and $\bw_n$ be two sequences that converge to $\bv$ strongly in $L^{p}(\Omega;\bbR^d)$ with the property that
	\begin{equation*}
		\limsup_{n \to \infty} F_n(\bv_n,A) = F''(\bv,A)\,, \qquad 	\limsup_{n \to \infty} F_n(\bw_n,B) = F''(\bv,B)\,.
	\end{equation*}
	Note that by \eqref{eq:CoercivityOfEnergy} the sequences of seminorms $[\bv_n]_{\frak{W}^{\rho_n,p}(A)}$ and $[\bw_n]_{\frak{W}^{\rho_n,p}(B)}$ are uniformly bounded. Next,  define
	\begin{equation*}
		\bszeta_{k,n} := \varphi_k \bv_n + (1- \varphi_k) \bw_n\,.
	\end{equation*}
	For fixed $k$, $\bszeta_{k,n}$ converges to $\bv$ strongly in $L^{p}(\Omega;\bbR^d)$ as $n \to \infty$. Also, 
    by definition, we have
\begin{equation}\label{eq:Lemma5.6PfSplitUp}
		\cD \bszeta_{k,n}(\bx,\by) = \varphi_k(\by) \cD \bv_n (\bx,\by) + (1- \varphi_k(\by) ) \cD \bw_n (\bx,\by)  + (\bv_n(\bx) -\bw_n(\bx)) \cD \varphi_k (\bx,\by)\,.
	\end{equation}
It then follows by Jensen's inequality for finite sums and the $L^{\infty}$ bound on $\varphi_k$ and $\grad \varphi_k$ that 
	\begin{equation*}
		|\cD \bszeta_{k,n}(\bx,\by)|^{p} \leq 2^{p-1} (|\cD \bv_n(\bx,\by)|^{p} + |\cD \bw_n(\bx,\by)|^{p} + \frac{2M}{q} |\bv_n(\by) - \bw_n(\by)|^{p} )
	\end{equation*}
	for all $(\bx,\by) \in \Omega \times \Omega$.
Notice that, from \eqref{eq:Lemma5.6PfSplitUp}, 
	\begin{equation*}
		\begin{split}
			\cD \bszeta_{k,n}(\bx,\by) =
				\begin{cases}
					\cD\bv_n(\bx,\by)\,, & (\bx,\by) \in A_{k-1} \times A_{k-1}\,, \\
					\cD \bw_n(\bx,\by)\,, & (\bx,\by) \in (\Omega \setminus A_k ) \times (\Omega \setminus A_k)\,. \\
				\end{cases}
		\end{split}
	\end{equation*}
	Now, for all $k = 0 ,\ldots, M$, we have from $B' = (A_{k-1} \cap B') \cup ( (\Omega\setminus \overline{A}_k) \cap B' ) \cup ( (A_k \setminus \overline{A}_{k-1}) \cap B'  )$ that we can write $A'\cup B'$ as a disjoint union as 
	\begin{equation*}
		\begin{split}
			A' \cup B' 
			&= (A' \cup (A_{k-1} \cap B')) \cup ( (\Omega\setminus \overline{A}_k) \cap B' ) \cup ( (A_k \setminus \overline{A}_{k-1}) \cap B'  )
		\end{split}
	\end{equation*}
	Setting $T_1 = A' \cup (A_{k-1} \cap B')$, $T_2 = (\Omega\setminus \overline{A}_k) \cap B'$ and $T_3 = (A_k \setminus \overline{A}_{k-1}) \cap B'$, we have
	\begin{equation*}
		\begin{split}
			&F_n(\bszeta_{k,n}, A' \cup B') \\
			&= F_n( \bv_{n}, T_1 ) + F_n( \bw_n, T_2 ) \\
				& \quad + \iint_{ ((A' \cup B') \times (A' \cup B')) \setminus ((T_1 \times T_1) \cup (T_2 \times T_2)) } \rho_n(\bx-\by) \Phi( |s[\bszeta_{k,n}](\bx,\by)| ) \, \rmd \by \, \rmd \bx \\
			&\leq F_n(\bv_n,A) + F_n(\bw_n,B) \\
				&\quad + C\iint_{ ((A' \cup B') \times (A' \cup B')) \setminus ((T_1 \times T_1) \cup (T_2 \times T_2)) } \rho_n(\bx-\by) (1 + |\cD \bszeta_{k,n}(\bx,\by)|^{p}) \, \rmd \by \, \rmd \bx \\
			&\leq F_n(\bv_n,A) + F_n(\bw_n,B) \\
				&\quad + C\iint_{ ((A' \cup B') \times (A' \cup B')) \setminus ((T_1 \times T_1) \cup (T_2 \times T_2)) } \rho_n(\bx-\by) (1 + |\cD \bv_n(\bx,\by)|^{p} + |\cD \bw_n(\bx,\by)|^{p}) \, \rmd \by \, \rmd \bx \\
				&\quad + C\iint_{ ((A' \cup B') \times (A' \cup B')) \setminus ((T_1 \times T_1) \cup (T_2 \times T_2)) } \rho_n(\bx-\by) \frac{2M}{q} |\bv_n(\by) - \bw_n(\by)|^{p} \, \rmd \by \, \rmd \bx\,.
		\end{split}
	\end{equation*}
	The third integral is majorized by
	\begin{equation}\label{eq:Lemma5.6:MainEstimate1}
		\frac{C(m,p)M}{q} \Vnorm{\bv_n - \bw_n}_{L^{p}(\Omega)}^{p}\,.
	\end{equation}
	To estimate the second integral we decompose the set $ ((A' \cup B') \times (A' \cup B')) \setminus ((T_1 \times T_1) \cup (T_2 \times T_2)) = S_{1,k} \cup S_{2,k}$, where
	\begin{equation*}
		\begin{split}
		S_{1,k} &:= (T_1 \times T_2) \cup (T_2 \times T_1) \\
		&\qquad \cup (A' \times T_3) \cup (T_3 \times A') \\
		&\qquad 
		\cup [( (\Omega \setminus \overline{A}_{k+1})  \cap B' ) \times T_3] \cup [T_3 \times ( (\Omega \setminus \overline{A}_{k+1})  \cap B' )]  \\
		&\qquad \cup [(A_{k-2} \cap B')  \times T_3 ] \cup [T_3 \times (A_{k-2} \cap B')]
		\end{split}
	\end{equation*}
	and
	\begin{equation*}
		S_{2,k} = [((A_{k-1} \setminus \overline{A}_{k-2}) \cup (A_{k+1} \setminus \overline{A}_{k}) ) \cap B' \times T_3] \cup [T_3 \times ( ( A_{k+1} \setminus \overline{A}_{k-2} ) \cap B') ]
	\end{equation*}
	are disjoint sets.
	Observe first that $\dist( S_{1,k}, \diag ) > 0$ for $k \in \{1, \ldots, M-2\}$, where $\diag := \{ (\bx,\bx)  \, : \, \bx \in \Omega\}$. The distance between the sets depends on $M$, and at worst is comparable to $\frac{1}{M-2}$. So let $q_0(M)$ be a number that satisfies $q_0(M) \leq \dist( S_{1,k}, \diag )$ for $k \in \{1, \ldots, M-2\}$.
	We therefore have that	\begin{equation}\label{eq:Lemma5.6:MainEstimate2}
		\begin{split}
		\iint_{S_{1,k}} &\rho_n(\bx-\by) (1 + |\cD \bv_n(\bx,\by)|^{p} + |\cD \bw_n(\bx,\by)|^{p}) \, \rmd \by \, \rmd \bx \\
		&\leq C \left( |\Omega| + \frac{\Vnorm{\bv_n}_{L^{p}(\Omega)}^{p}}{q_0(M)^{p}} + \frac{\Vnorm{\bw_n}_{L^{p}(\Omega)}^{p}}{q_0(M)^{p}} \right) \int_{|\bsxi| \geq q_0(M)} \rho_n(\bsxi) \, \rmd \bsxi\,.
		\end{split}
	\end{equation}
	Now observe that any point in $\bigcup_{k=3}^{M-4} S_{2,k}$ is contained in no more than $8$ of the $S_{2,k}$. 	Since $A_{M-3} \cap B' \subset (A \cap B)$, we have that 
	\begin{equation*}
		\begin{split}
			\sum_{k=3}^{M-4} \iint_{S_{2,k}} &\rho_n(\bx-\by) (1 + |\cD \bv_n(\bx,\by)|^{p} + |\cD \bw_n(\bx,\by)|^{p}) \, \rmd \by \, \rmd \bx \\
			&\leq 8 \int_{ A_{M-3} \cap B' } \int_{ A_{M-3} \cap B' } \rho_n(\bx-\by) (1 + |\cD \bv_n(\bx,\by)|^{p} + |\cD \bw_n(\bx,\by)|^{p}) \, \rmd \by \, \rmd \bx \\
			&\leq 8 (|\Omega| + [\bv_n]_{\frak{W}^{\rho_n,p}(A)}^p + [\bw_n]_{\frak{W}^{\rho_n,p}(B)}^p) \leq C_0
		\end{split}
	\end{equation*}
	for some constant $C_0$ independent of $n$.
	Thus we can choose an index $k = k(n) \in \{ 3, \ldots, M-4 \}$ such that 
	\begin{equation}\label{eq:Lemma5.6:MainEstimate3}
		\iint_{S_{2,k(n)}} \rho_n(\bx-\by) (1 + |\cD \bv_n(\bx,\by)|^{p} + |\cD \bw_n(\bx,\by)|^{p}) \, \rmd \by \, \rmd \bx \leq \frac{C_0}{M-4}\,.
	\end{equation}
	Combining \eqref{eq:Lemma5.6:MainEstimate1}, \eqref{eq:Lemma5.6:MainEstimate2} and \eqref{eq:Lemma5.6:MainEstimate3} brings us to
	\begin{equation*}
		\begin{split}
			F_n( \bszeta_{k(n),n},A' \cup B') &\leq F_n(\bv_n,A) + F_n(\bw_n,B) \\
			&\qquad + \frac{CM}{q} \Vnorm{\bv_n - \bw_n}_{L^{p}(\Omega)}^{p} + \frac{C_0}{M-4} \\
			&\qquad + 
			C \left( |\Omega| + \frac{\Vnorm{\bv_n}_{L^{p}(\Omega)}^{p}}{q_0(M)^{p}} + \frac{\Vnorm{\bw_n}_{L^{p}(\Omega)}^{p}}{q_0(M)^{p}} \right) \int_{|\bsxi| \geq q_0(M)} \rho_n(\bsxi) \, \rmd \bsxi\,. 
		\end{split}
	\end{equation*}
	Note that $\bszeta_{k(n),n} \to \bv$ strongly in $L^{p}(\Omega;\bbR^d)$, and that $\lim\limits_{n \to \infty} \intdm{|\bsxi|> q_0(M) }{\rho_n(\bsxi)}{\bsxi} = 0$. Therefore taking the limit as $n \to \infty$ gives
	\begin{equation*}
		F''(\bv,A' \cup B') \leq F''(\bv,A) + F''(\bv,B) + \frac{C_0}{M-4}\,,
	\end{equation*}
	and the result follows by letting $M \to \infty$.
\end{proof}
For our first result, first notice that both the lower and upper $\Gamma$-limits $F'(\bv,\cdot)$ and $F''(\bv, \cdot)$ are increasing set functions. The next lemma shows that, in fact, $F''$ is inner regular. 
\begin{lemma}\label{lma:InnerMeasure}
	For $\bv \in W^{1,p}(\Omega;\bbR^d)$ and for $A \in \cA_0(\Omega)$,
	$
		\sup_{A' \Subset A} F''(\bv,A') = F''(\bv,A)\,.
	$
\end{lemma}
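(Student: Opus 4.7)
The direction $\sup_{A' \Subset A} F''(\bv, A') \leq F''(\bv, A)$ is immediate from the monotonicity of the set function $F''(\bv, \cdot)$, which is inherited from $\overline{F}_n(\bv,\cdot)$. The content lies entirely in the reverse inequality.

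The plan is to decompose $A$ into a compactly contained core and a thin boundary strip, apply the subadditivity from Lemma \ref{lma:Subadditivity}, and control the energy contribution of the strip via Lemma \ref{lma:BoundFromAbove} together with absolute continuity of the Lebesgue integral of $|\grad \bv|^p$. Fix $\eta > 0$ and choose three nested open sets $A_1 \Subset A_2 \Subset A_3 \Subset A$ in $\cA_0(\Omega)$ such that $|A \setminus \overline{A_1}| < \eta$; this can be done by taking suitable sub-level sets of $\bx \mapsto \dist(\bx, \partial A)$ intersected with a large ball. Set $A' := A_2$ and $B' := A \setminus \overline{A_1}$. Since $\overline{A_1} \subset A_2$, one has $A' \cup B' = A$.

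I would then apply Lemma \ref{lma:Subadditivity} with the inner pair $(A', B')$ and outer pair $(A_3, \widehat{B})$, where $\widehat{B} \in \cA_0(\Omega)$ is an open neighborhood of $\overline{B'}$ with $B' \Subset \widehat{B}$ and $|\widehat{B}| \leq 2\eta$. This yields
\begin{equation*}
F''(\bv, A) = F''(\bv, A' \cup B') \leq F''(\bv, A_3) + F''(\bv, \widehat{B}) \leq \sup_{U \Subset A} F''(\bv, U) + C \left( \Vnorm{\grad \bv}_{L^p(\widehat{B})}^p + |\widehat{B}| \right),
\end{equation*}
where the last step uses $A_3 \Subset A$ and Lemma \ref{lma:BoundFromAbove}. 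Letting $\eta \to 0$, the bracketed term vanishes by absolute continuity of the integral of $|\grad \bv|^p$, and the desired inequality follows.

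The main technical subtlety, and the principal obstacle, is the construction of $\widehat{B}$ when $A$ touches $\partial \Omega$: in that case $\overline{B'}$ may intersect $\partial \Omega$, so that no $\widehat{B} \in \cA_0(\Omega)$ can contain it compactly. When $A \Subset \Omega$ the choice of $\widehat{B}$ is a routine tubular neighborhood. In the general case one intersects $A$ with an exhaustion $\Omega_\tau = \{\bx \in \Omega : \dist(\bx, \partial \Omega) > \tau\}$ to reduce to the previous case, then exchanges the order of limits as $\tau, \eta \to 0$ using monotonicity of $F''(\bv,\cdot)$ and the absolute continuity bound.
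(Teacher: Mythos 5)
Your overall strategy---peel off a thin boundary strip, glue recovery sequences for the core and the strip, and control the strip's energy via Lemma \ref{lma:BoundFromAbove} plus absolute continuity of $\int |\grad\bv|^p$---is exactly the paper's strategy, and the choice of $A''\Subset A'\Subset A$ with $|A\setminus\overline{A''}|+\Vnorm{\grad\bv}^p_{L^p(A\setminus\overline{A''})}<\delta$ matches the paper's setup. The inequality $\sup_{A'\Subset A}F''(\bv,A')\leq F''(\bv,A)$ by monotonicity is also as in the paper.

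The gap is in the step you yourself flag as the ``main technical subtlety,'' and it is not a corner case: it is the main case. Applying Lemma \ref{lma:Subadditivity} as a black box forces you to produce $\widehat{B}\in\cA_0(\Omega)$ with $B'\Subset\widehat{B}$, i.e.\ an open subset of $\Omega$ whose interior compactly contains $\overline{B'}=\overline{A\setminus\overline{A_1}}$. Whenever $\overline{A}$ meets $\p\Omega$ --- in particular for $A=\Omega$, which is precisely the case needed to prove Theorem \ref{thm:LocalGammaConvergence} and to set up Remark \ref{identification-subaditivity} --- no such $\widehat{B}$ exists, since $\overline{B'}\supset\p\Omega$ is not a compact subset of $\Omega$. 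Your proposed repair via the exhaustion $\Omega_\tau$ does not escape this: the truncations $A\cap\Omega_\tau$ are themselves not compactly contained in $A$ (their closures still meet $\p A$ away from $\p\Omega$), so $\sup_\tau F''(\bv,A\cap\Omega_\tau)$ is not dominated by $\sup_{U\Subset A}F''(\bv,U)$ without a further inner-regularity argument, and passing from $F''(\bv,A\cap\Omega_\tau)$ back up to $F''(\bv,A)$ is again an instance of the very statement being proved. The way out --- and what the paper actually does --- is to observe that the compact containment $B'\Subset B$ in Lemma \ref{lma:Subadditivity} is never used in its proof: the cutoff construction only needs the gap $q=\dist(A',\p A)>0$ on the \emph{first} pair, while for the second pair $B'\subset B$ suffices. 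One therefore re-runs the gluing argument of Lemma \ref{lma:Subadditivity} with $B=B'=A\setminus\overline{A''}$ (no enlargement), obtaining $F_n(\bszeta_{k(n),n},A)\leq F_n(\bv_n,A')+F_n(\bw_n,A\setminus\overline{A''})+\mathrm{errors}$ directly, with the second term bounded by $C\delta+R_n$ via Lemma \ref{lma:BoundFromAbove} and \eqref{eq:Lemma5.7:Pf1}. A secondary, more minor point: even when $\overline{A}\subset\Omega$, your bound $|\widehat{B}|\leq 2\eta$ tacitly assumes $|\overline{B'}|$ is close to $|B'|$, which can fail if $\p A$ has positive Lebesgue measure; working with the strip itself rather than an open neighborhood of its closure also removes this issue.
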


\begin{proof}
	 It suffices to show that $
		\sup_{A' \Subset A} F''(\bv,A') \geq F''(\bv,A)\,.$  
	Let $\delta > 0$. Then there exists a set $A'' \Subset A$ such that
	\begin{equation}\label{eq:Lemma5.7:Pf1}
		|A\setminus \overline{A''}| + \Vnorm{\grad \bv}_{L^{p}( A \setminus \overline{A''} )}^{p} < \delta\,.
	\end{equation}
	Let $A' \in \cA_0(\Omega)$ with $A'' \Subset A' \Subset A$. Let $\bv_n$ and $\bw_n$ be two sequences in $L^{p}(\Omega;\bbR^d)$ that converge to $\bv$ strongly in $L^{p}(\Omega;\bbR^d)$ with the property that
	\begin{equation*}
		\lim_{n \to \infty} F_n(\bv_n,A') = F''(\bv,A') \quad \text{ and } \quad \lim_{n \to \infty} F_n(\bw_n, A \setminus \overline{A''}) = F''(\bv, A \setminus \overline{A''})\,.
	\end{equation*}
	Note that by Lemma \ref{lma:BoundFromAbove} and \eqref{eq:Lemma5.7:Pf1} there exists a quantity $R_n$ with $\lim\limits_{n \to \infty} R_n \to 0$ such that
	\begin{equation}\label{eq:Lemma5.7:Remainder}
		F_n(\bw_n, A \setminus \overline{A''}) \leq F''(\bv, A \setminus \overline{A''}) + R_n \leq C \delta + R_n\,.
	\end{equation}
Arguing in exactly the same manner as in the proof of Lemma \ref{lma:Subadditivity} (by taking $B=A \setminus \overline{A''}$ and $A=A'$), we can construct a sequence $\bszeta_{k(n),n}$ (as a combination of $\bv_n$ and $\bw_n$) that converges to $\bv$ strongly in $L^{p}(\Omega;\bbR^d)$ as $n \to \infty$ and 
\begin{equation*}
		\begin{split}
			F_n( \bszeta_{k(n),n},A) &\leq F_n(\bv_n,A') + F_n(\bw_n, A \setminus \overline{A''}) \\
			&\qquad + \frac{CM}{q} \Vnorm{\bv_n - \bw_n}_{L^{p}(\Omega)}^{p} + \frac{C_0}{M-4} \\
			&\qquad + 
			C \left( |\Omega| + \frac{\Vnorm{\bv_n}_{L^{p}(\Omega)}^{p}}{q_0(M)^{p}} + \frac{\Vnorm{\bw_n}_{L^{p}(\Omega)}^{p}}{q_0(M)^{p}} \right) \int_{|\bsxi| \geq q_0(M)} \rho_n(\bsxi) \, \rmd \bsxi\,. 
		\end{split}
	\end{equation*}
 Therefore using \eqref{eq:Lemma5.7:Remainder} and taking the limit as $n \to \infty$  we obtain 
	\begin{equation*}
		F''(\bv,A) \leq F''(\bv,A') + C\delta + \frac{C_0}{M-4}\,.
	\end{equation*}
Finally,  taking $M \to \infty$ gives
	\begin{equation*}
		F''(\bv,A) \leq F''(\bv,A') + C\delta \leq \sup_{A' \Subset A} I''(\bv,A') + C \delta\,,
	\end{equation*}
	and the result follows since $\delta > 0$ is arbitrary.

\end{proof}
\begin{remark}\label{identification-subaditivity}
As a corollary of the previous two lemmas we obtain the following. First, Lemma \ref{lma:InnerMeasure} allows us to extend $\overline{F}_{\infty}(\bu, \cdot)$ as an inner regular measure as follows. Fix $\bu\in W^{1,p}(\Omega;\bbR^d)$. Define the inner regular envelope $ {F}_{\infty}^{env}(\bu, \cdot): \mathcal{A}_0(\Omega) \to \mathbb{R}$ of $\overline{F}_{\infty} (\bu, \cdot)$ as follows
    \[
    {F}_{\infty}^{env}(\bu, A) = \sup_{A'\Subset A, A\in \frak{D}_0(\Omega)} \overline{F}_{\infty}(\bu, A')=\sup_{A'\Subset A, A\in \frak{D}_0(\Omega)} F'(\bu, A')=\sup_{A'\Subset A, A\in \frak{D}_0(\Omega)} F''(\bu, A').
    \]
    By density of the class $\frak{D}_0(\Omega)$ in $\cA_{0}(\Omega)$, we have that 
    \[
     {F}_{\infty}^{env}(\bu, A) = \sup_{A'\Subset A} F'(\bu, A')=\sup_{A'\Subset A} F''(\bu, A').
    \]
  Moreover,  since $\overline{F}_{\infty}(\bu, A') = F''(\bu, A')$ for all $A'\in \frak{D}_0(\Omega)$ we have 
    \[
       {F}_{\infty}^{env}(\bu, A) = \sup_{A'\Subset A, A\in \frak{D}_0(\Omega)} F''(\bu, A') = \sup_{A'\Subset A} F''(\bu, A') = F''(\bu, A)
    \]
    where in the last equality we used Lemma \ref{lma:InnerMeasure}. In addition, using the monotonocity of $F'(\bu, \cdot)$ as a set function we have 
    \[
    {F}_{\infty}^{env}(\bu, A) = \sup_{A'\Subset A} F'(\bu, A')\leq F'(\bu, A) \leq F''(\bu, A)=  {F}_{\infty}^{env}(\bu, A)\]
   As a consequence, from hereafter we assume that $\overline{F}_{\infty}$ is inner regular and identify $\overline{F}_{\infty}$, $F^{env}_{\infty}$, and $F''$ as the same at all $\bu$ and all $A\in \cA_0(\Omega).$ Additionally, from this identification and Lemma \ref{lma:Subadditivity} it is now clear that for any $\bu\in W^{1,p}(\Omega;\bbR^d)$ and $A, B\in \cA_0(\Omega)$
    \[
    \overline{F}_{\infty}(\bu, A\cup B) \leq \overline{F}_{\infty}(\bu, A) + \overline{F}_{\infty}(\bu, B).
    \]
    In the event the sets $A$ and $B$ are disjoint, we have equality. This is proved in the next lemma. 
\end{remark}

\begin{lemma}\label{superadditivity}
    Let $\bu \in W^{1,p}(\Omega;\bbR^{d})$. Suppose that $A, B\in \cA_0(\Omega)$ with $A\cap B = \emptyset$. Then 
    \[
    \overline{F}_{\infty}(\bu, A\cup B) \geq \overline{F}_{\infty}(\bu, A) + \overline{F}_{\infty}(\bu, B).
    \]
    
\end{lemma}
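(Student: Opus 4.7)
The plan is to exploit the fact that for \emph{disjoint} open sets $A, B \in \cA_0(\Omega)$, the nonlocal energy decomposes into a sum over $A \times A$ and $B \times B$ plus a nonnegative ``cross-interaction'' term over $A \times B$ and $B \times A$. Concretely, using the symmetry of the kernel and integrand, for every $n$ and every $\bv \in L^{p}(\Omega;\bbR^d)$,
\begin{equation*}
    F_n(\bv, A \cup B) = F_n(\bv,A) + F_n(\bv,B) + 2\int_A \int_B \rho_n(\bx-\by) \Phi(|s[\bv](\bx,\by)|) \, \rmd \by \, \rmd \bx \geq F_n(\bv,A) + F_n(\bv,B)\,,
\end{equation*}
since $\Phi \geq 0$. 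This pointwise (in $n$) superadditivity is the engine of the proof; the rest of the argument is devoted to passing to the limit correctly.

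Next, I would invoke the identification established in Remark \ref{identification-subaditivity}: for any $\bu \in W^{1,p}(\Omega;\bbR^d)$ and any $A \in \cA_0(\Omega)$,
\begin{equation*}
    \overline{F}_\infty(\bu,A) = F'(\bu,A) = F''(\bu,A)\,.
\end{equation*}
This is the key tool that lets me compare the recovery sequence for $A \cup B$ with the $\Gamma$-$\liminf$ values on $A$ and on $B$ separately; without it, the argument would yield only $F'$ on the right but $F''$ on the left.

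With these two ingredients in hand, the proof proceeds as follows. Choose a recovery sequence $\bv_n \to \bu$ strongly in $L^p(\Omega;\bbR^d)$ for $F''(\bu,A \cup B)$, so that
\begin{equation*}
    \lim_{n \to \infty} F_n(\bv_n, A \cup B) = F''(\bu, A \cup B) = \overline{F}_\infty(\bu, A \cup B)\,.
\end{equation*}
Applying the pointwise inequality above to $\bv_n$ and taking $\liminf$ yields
\begin{equation*}
    \overline{F}_\infty(\bu, A \cup B) \geq \liminf_{n \to \infty} \bigl[ F_n(\bv_n,A) + F_n(\bv_n,B) \bigr] \geq \liminf_{n \to \infty} F_n(\bv_n,A) + \liminf_{n \to \infty} F_n(\bv_n,B)\,,
\end{equation*}
where the second inequality is the standard subadditivity of $\liminf$. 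Since $\bv_n \to \bu$ strongly in $L^p(\Omega;\bbR^d)$, the definition of the lower $\Gamma$-limit gives $\liminf_{n \to \infty} F_n(\bv_n,A) \geq F'(\bu,A)$ and similarly for $B$. Combining this with the identification $F'(\bu,\cdot) = \overline{F}_\infty(\bu,\cdot)$ finishes the proof.

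No real obstacle is anticipated: the nontrivial content has already been assembled in the preceding lemmas and the identification remark. The only point requiring mild care is the bookkeeping between $F'$ and $F''$, which is why the recovery sequence is taken for $F''$ on $A \cup B$ (so that $\lim$, not just $\limsup$, holds on the left) while the right-hand side is estimated from below via $F'$ on each piece; the identification then collapses both quantities to $\overline{F}_\infty$.
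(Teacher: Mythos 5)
Your proof is correct, and it takes a genuinely shorter route than the paper's. Both arguments start from the decomposition of $F_n(\cdot, A\cup B)$ over $(A\times A)\cup(B\times B)\cup(A\times B)\cup(B\times A)$, but they treat the cross term differently. You simply discard it using $\Phi \geq 0$, which gives superadditivity at the level of $F_n$ for the actual sets $A$ and $B$, and then you pass to the limit along a single recovery sequence for $A\cup B$, invoking the identification $F' = F'' = \overline{F}_{\infty}$ on all of $\cA_0(\Omega)$ from Remark \ref{identification-subaditivity} to match the $\Gamma$-liminf lower bounds on $A$ and on $B$ with the left-hand side. The paper instead works with compactly contained subsets $A'\Subset A$ and $B'\Subset B$ so that $\dist(A',B')=\delta>0$, shows that the cross term actually \emph{vanishes} as $n\to\infty$ (using the $p$-growth of $\Phi$ and the concentration property \eqref{eq:AssumptionsOnKernels1} of the kernels), obtains $F'(\bu,A')+F'(\bu,B')\leq F'(\bu,A'\cup B')$, and only then takes suprema and appeals to inner regularity. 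The extra work in the paper --- proving the cross term vanishes rather than merely being nonnegative --- is not needed for the stated one-sided inequality (it would be needed to upgrade superadditivity to additivity), so your argument is a legitimate simplification. The only points requiring mild care, which you handle correctly or which are routine: a recovery sequence for $F''(\bu,A\cup B)$ exists (standard, and used elsewhere in the paper), and since $\bu\in W^{1,p}(\Omega;\bbR^d)$ Lemma \ref{lma:BoundFromAbove} gives $F''(\bu,A\cup B)<\infty$, so along the recovery sequence one eventually has $\bv_n\in\frak{W}^{\rho_n,p}(\Omega;\bbR^d)$ and hence $\overline{F}_n=F_n$ there.
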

\begin{proof}
Let $A', B'\in \cA_0(\Omega)$ with $A'\Subset A$, $B'\Subset B$. Set $\delta = \dist(A', B')>0$.  Choose a sequence $\bu_{n} \to \bu$ strongly in $L^p(\Omega;\bbR^d)$ such that 
\[
F'(\bu, A'\cup B') = \liminf_{n \to \infty} F_{n}(\bu_n, A'\cup B'). 
\]
Then for each $n$
    \begin{equation}\label{super-add-equation}
    F_{n}(\bu_n, A'\cup B') = F_{n}(\bu_n, A')+F_{n}(\bu_n, B') + 2\int_{A'}\int_{B'} \rho_{n}(\bx-\by)\Phi(|s[\bu_n](\bx, \by)|) \, \rmd \by \, \rmd \bx\,.
    \end{equation}
    Notice that using the fact that $|\bx-\by|>\delta$ for $\bx\in A'$ and $\by\in B'$, 
    \[
    \begin{split}
    2\int_{A'}\int_{B'} \rho_{n}(\bx-\by)\Phi(|s[\bu_n](\bx, \by)|) \, \rmd \by \, \rmd \bx &\leq C \int_{A'}\int_{B'} \rho_{n}(\bx-\by)(1 + |\cD  \bu_n (\bx,\by)|^p) \, \rmd \by \, \rmd \bx\\
    &\leq C (1 + \delta^{-p}\|\bu_n\|^p_{L^{p}}) \int_{\{|\bsxi|>\delta\}}\rho_{n}(|\bsxi|)\,. 
    \end{split}
    \]
    The latter goes to $0$ as $n\to \infty$ since $\rho_n$ satisfies \eqref{eq:AssumptionsOnKernels1}. Now taking the limit inferior as $n \to \infty$ on both sides of the equation \eqref{super-add-equation} we obtain by definition of the lower $\Gamma$-limit $F'$ that
    \[
    F'(\bu, A') + F'(\bu, B') \leq F'(\bu, A'\cup B'). 
    \]
    We now take the sup over $A' \Subset A$ and $B' \Subset B$ and use the inner regularity of $\overline{F}_{\infty}$ to complete the proof.
\end{proof}
The next lemma shows that the $\Gamma$-limit $\overline{F}_{\infty}$ is in fact local. To be precise, we have the following. 
\begin{lemma}\label{locality}
    Let $A\in \cA_0(\Omega)$. Suppose that $\bu, \bv \in W^{1,p}(\Omega;\bbR^d)$ such that $\bu=\bv$ for almost all $\bx\in A$. Then $\overline{F}_{\infty}(\bu, A) = \overline{F}_{\infty}(\bv, A)$.
\end{lemma}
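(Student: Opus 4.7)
The plan is to combine the symmetry of the hypothesis with the inner regularity of $\overline{F}_\infty$, and then exploit a cutoff construction analogous to that in the proof of Lemma \ref{lma:Subadditivity}. By symmetry in $\bu$ and $\bv$, it suffices to establish $\overline{F}_\infty(\bv, A) \leq \overline{F}_\infty(\bu, A)$. The identification from Remark \ref{identification-subaditivity} gives $\overline{F}_\infty(\bv, A) = \sup_{A' \Subset A} F''(\bv, A')$, which reduces the task to showing $F''(\bv, A') \leq \overline{F}_\infty(\bu, A)$ for every $A' \Subset A$.

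Fix $A' \Subset A$ and $\delta > 0$. I would select (i) a near-optimal test sequence $\bu_n \to \bu$ in $L^p(\Omega;\bbR^d)$ with $\bu_n \in \mathfrak{W}^{\rho_n,p}(\Omega;\bbR^d)$ and $\limsup_n F_n(\bu_n, A) \leq \overline{F}_\infty(\bu, A) + \delta$, and (ii) any auxiliary sequence $\bw_n \to \bv$ in $L^p(\Omega;\bbR^d)$ with $\bw_n \in \mathfrak{W}^{\rho_n,p}(\Omega;\bbR^d)$, for example smooth mollifications of $\bv$, which lie in $\mathfrak{W}^{\rho_n,p}(\Omega;\bbR^d)$ for every $n$ because $\rho_n \in L^1(\bbR^d)$. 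Pick a cutoff $\varphi \in C_c^\infty(\Omega)$ with $\varphi \equiv 1$ on $A'$ and $\supp(\varphi) \subset A$, and define
\[
\bv_n := \varphi \bu_n + (1-\varphi) \bw_n\,.
\]
Since $\bu = \bv$ on $A$ and $\supp(\varphi) \subset A$, we have $\varphi \bu = \varphi \bv$ pointwise on $\Omega$, so $\bv_n \to \varphi \bu + (1-\varphi)\bv = \bv$ strongly in $L^p(\Omega;\bbR^d)$. A decomposition of $\bv_n(\bx) - \bv_n(\by)$ analogous to \eqref{eq:Lemma5.6PfSplitUp}, combined with the Lipschitz bound on $\varphi$, shows that $\bv_n \in \mathfrak{W}^{\rho_n,p}(\Omega;\bbR^d)$, hence $\overline{F}_n(\bv_n, A') = F_n(\bv_n, A')$.

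The payoff comes from the locality of $F_n$: since $\varphi \equiv 1$ on $A'$, $\bv_n$ coincides with $\bu_n$ pointwise on $A'$, and therefore
\[
F_n(\bv_n, A') = \int_{A'}\int_{A'} \rho_n(\bx-\by) \Phi(|s[\bu_n](\bx, \by)|) \, \rmd \by \, \rmd \bx = F_n(\bu_n, A') \leq F_n(\bu_n, A)\,.
\]
Using $\bv_n$ as a test sequence in the definition of $F''(\bv, A')$, we obtain $F''(\bv, A') \leq \limsup_n F_n(\bu_n, A) \leq \overline{F}_\infty(\bu, A) + \delta$. Sending $\delta \to 0$ and then taking the supremum over $A' \Subset A$ yields $\overline{F}_\infty(\bv, A) \leq \overline{F}_\infty(\bu, A)$, and the reverse inequality follows by swapping $\bu$ and $\bv$. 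The only subtle point is ensuring the interpolation $\bv_n$ belongs to $\mathfrak{W}^{\rho_n,p}(\Omega;\bbR^d)$ so that the extended energy $\overline{F}_n(\bv_n, A')$ equals $F_n(\bv_n, A')$ rather than being $+\infty$; this is precisely why the auxiliary sequence $\bw_n$ for $\bv$ is used in place of $\bv$ itself as the outer part of the interpolation.
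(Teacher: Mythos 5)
Your proof is correct and follows essentially the same route as the paper: a cutoff interpolation that coincides with a near-optimal sequence on $A' \Subset A$, combined with the locality of $F_n(\cdot,A')$ and the inner regularity of $F''$. The only cosmetic difference is that the paper glues the recovery sequence directly with the fixed function ($\bu_n = \varphi \bv_n + (1-\varphi)\bu$), whereas you use an auxiliary approximating sequence $\bw_n$ for the outer part to guarantee membership in $\mathfrak{W}^{\rho_n,p}(\Omega;\bbR^d)$ — a harmless and slightly more careful variant.
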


\begin{proof}
    It suffices to show that $\overline{F}_{\infty}(\bu, A) \leq \overline{F}_{\infty}(\bv, A)$ (as we can switch the role of $\bu$ and $\bv$), for which, in turn, it is enough to prove that
    \[
    F''(\bu, A') \leq F''(\bv, A'), \; \text{for any $A'\Subset A$}.
    \] 
    To that end, fix $A', A''$ such that $A'\Subset A''\Subset A$. Set $\delta=\dist(A', \bbR^{d}\setminus A'')$. Choose $\bv_n \to \bv $ strongly in $L^{p}(\Omega;\bbR^d)$ such that 
    \[
    F''(\bv, A') = \limsup_{n\to \infty} F_{n}(\bv_n, A'). 
    \]
    Let $\varphi\in C_{c}^{\infty}(A)$ such that $\varphi = 1$ on $A''$. Construct the sequence $\bu_n = \varphi \bv_n + (1-\varphi) \bu$. Since $\bu =\bv$ almost everywhere in $A$, we have that $\bu_n\to \bu$ strongly in $L^{p}(\Omega;\bbR^d)$. Using this sequence, it is clear now that 
    \[
    F''(\bu, A') \leq \limsup_{n\to \infty} F_{n}(\bu_n, A')= \limsup_{n\to \infty} F_{n}(\bv_n, A') = F''(\bv, A'),
    \]
    where we used $\bu_n = \bv_n$ on $A'$. 
\end{proof}
We are now ready to give the proof the integral representation for the $\Gamma$-limit $\overline{F}_{\infty}$. 
\begin{proof}[Proof of Theorem \ref{thm:LocalGammaConvergence}]
	First, it follows from Lemma \ref{lma:BoundFromBelow} that $\overline{F}_{\infty}(\bv,A) = +\infty$ if $\bv \in L^{p}(A;\bbR^d) \setminus W^{1,p}(A;\bbR^d)$.
	Now we will check each of the sufficient conditions i) through vi) of Theorem \ref{thm:ACCriterion}. To that end, item $i)$ is precisely Lemma \ref{locality}. For item $ii)$, we apply the De Georgi-Letta Measure Criterion. For that, we have shown previously that the $\Gamma$-limit  $\overline{F}_{\infty}(\bv,\cdot)$ is an increasing set function,  inner regular,  subadditive and superadditive, see Remark \ref{identification-subaditivity} and Lemma \ref{superadditivity}. Item $iii)$ follows from the same upper bound for the $\Gamma$-$\limsup$ (Lemma \ref{lma:BoundFromAbove}). Item $iv)$ follows from the fact that $F_n$ depend only on difference quotients of $\bv$. Finally, item $v)$ follows from the fact that $\overline{F}_{\infty}$ is a $\Gamma$-limit and the fact that weak convergence in $W^{1,p}(\Omega;\bbR^d)$ implies strong convergence in $L^{p}(\Omega;\bbR^d)$ by Rellich's theorem. Thus Theorem \ref{thm:ACCriterion} guarantees the existence of a   Carath\'eodory function $f_\infty : \Omega \times \bbR^{d \times d} \to [0,\infty)$ satisfying the growth condition
	\begin{equation*}
		0 \leq f_\infty(\bx,\bbF) \leq C(a(\bx) + |\bbF|^p)
	\end{equation*}
	for all $\bx \in \Omega$ and $\bbF \in \bbR^{d \times d}$ such that $f_\infty(x, \cdot)$ is quasiconvex and 
	\begin{equation*}
		\overline{F}_{\infty}(\bv,A) = \intdm{A}{f_\infty(\bx,\grad \bv(\bx))}{\bx}
	\end{equation*}
	for all $\bv \in W^{1,p}(\Omega;\bbR^d)$ and $A \in \cA_0(\Omega)$. 
 What remains to show is item vi), and so conclude that the integrand $f_\infty$ does not depend on $\bx$.
To that end, let $\bx_0$, $\by_0 \in \Omega$ and $r > 0$ such that $B(\bx_0,r) \cup B(\by_0,r) \subset \Omega$ be given. We need to show that 
$\overline{F}_{\infty}(\ba,B(\bx_0,r)) = \overline{F}_{\infty}(\ba,B(\by_0,r))$
for every $\bbF \in \bbR^{d \times d}$, where the vector field $\ba$ is the affine map given by $\ba(\bx)=\bbF \bx$.
To prove this, we first show that
\begin{equation}\label{eq:TranslationInvariantProof0}
		\overline{F}_{\infty}(\ba, B(\bx_{0}, r')) \leq \overline{F}_{\infty}(\ba, B(\by_{0}, r) )
	\end{equation}
for any $0 < r' < r$. 
	Let ${\bf v}_{n}$ be a sequence of vector fields converging to the affine map $\ba$ strongly in $L^{p}(\Omega;\mathbb{R}^{d})$ such that 
	\[
	\liminf_{n \to \infty} F_{n}({\bf v}_{n}, B(\by_{0}, r)) = \overline{F}_{\infty}(\ba, B(\by_{0}, r))\,.
	\]
	Define a new sequence
	\begin{equation*}
		{\bf u}_{n}(\bx) := 
		\begin{cases}
			{\bf v}_{n}(\bx+\by_{0}-\bx_{0}) - \bbF (\by_{0} - {\bx_{0}})\,, &\quad \text{ when } \bx \in B(\bx_{0}, r)\,, \\
			\bbF\bx\,, &\quad \text{ otherwise. }
		\end{cases}
	\end{equation*}
	Observe that $\bx\in B(\bx_{0}, r)$ if and only if $\bx+\by_{0} - \bx_{0}\in B(\by_{0}, r)$.
	Then a straightforward calculation shows that ${\bf u}_{n} \to \ba$ strongly in $L^{p}(\Omega,\mathbb{R}^{d})$.  Moreover, for each $n$ we have by change of variables
	\[
	\begin{split}
		F_{n}({\bf u}_{n}, B(\bx_{0}, r')) &=  \int_{ B(\bx_{0}, r')}  \int_{B(\bx_{0}, r')} \rho_{n}(\by-\bx) \Phi( |s[\bu_n](\by,\bx)| )  \, \rmd \by \, \rmd \bx \\
		&= \int_{B(\bx_{0}, r')} \int_{B(\bx_0,r')}   \rho_{n}(\by-\bx) \Phi( |s[\bv_n](\by+\by_0-\bx_0,\bx+\by_0-\bx_0)| ) \, \rmd \by \, \rmd \bx \\
		&=\int_{B(\by_{0}, r')} \int_{B(\by_0,r')}   \rho_{n}(\by-\bx) \Phi (|s[\bv_n](\by,\bx)|)\, \rmd \by \, \rmd \bx\,.
	\end{split}
	\]
	So we we obtain by monotonicity of the double integral that 
	\begin{equation}\label{eq:TranslationInvariantProof1}
		F_{n}({\bf u}_{n} , B(\bx_{0}, r')) \leq F_{n}({\bf v}_{n}, B(\by_{0}, r))\,.
	\end{equation}
	Taking the liminf on both sides of \eqref{eq:TranslationInvariantProof1}, 
	\begin{equation*}
		\overline{F}_{\infty}(\ba, B(\bx_0,r')) \leq \liminf_{n \to \infty} F_n( \bv_n , B(\bx_0,r')) \leq \liminf_{n \to \infty} F_n( \bv_n , B(\by_0,r)) = \overline{F}_{\infty}(\ba, B(\by_0,r))\,,
	\end{equation*}
	which is \eqref{eq:TranslationInvariantProof0}.
	Now,  $r' < r$ was arbitrary, so an application of Lemma \ref{lma:InnerMeasure} results in
	\begin{equation*}
		\overline{F}_{\infty}(\ba, B(\bx_0,r)) \leq \overline{F}_{\infty}(\ba, B(\by_0,r))\,.
	\end{equation*}
	We obtain equality in the above inequality by noting that $\bx_{0}$ and $\by_{0}$ were arbitrary. 
\end{proof}

We close this section by commenting on the convergence of the sequence of minimizers to the nonlocal functions $F_n$ subject to Dirichlet boundary conditions.  
To that end, for a given function $\bg \in W^{1,p}(\Omega;\bbR^d)$, and for a given $A \in \cA_0(\Omega)$ and a fixed $r_0 \in (0,\frac{1}{2}\diam(A))$, define
\begin{equation*}
	\frak{W}^{\rho_n,p}_{\bg}(A;\bbR^d) := \{ \bv \in \frak{W}^{\rho_n,p}(\Omega;\bbR^d) \, ; \, \bv(\bx) = \bg(\bx) \text{ for } \bx \in \Omega \text{ with } \dist(\bx,\Omega \setminus A) < r_0 \}\,,
\end{equation*}
and define $F_{n,\bg} : L^{p}(\Omega;\bbR^d) \times \cA_0(\Omega) \to \overline{\bbR}$ by
\begin{equation*}
	F_{n,\bg}(\bv,A) := 
		\begin{cases}
			F_n(\bv,A)\,, & \bv \in \frak{W}^{\rho_n,p}_{\bg}(\Omega;\bbR^d) \,, \\
			+\infty\,, & \text{otherwise. }
		\end{cases}
\end{equation*}
We use the direct method of calculus of variations to show that under an additional condition on the sequence of kernels, for each $n$, the functional $F_{n,\bg}(\cdot, A)$ has a minimizer in $\mathfrak{W}^{\rho_n,p}_{\bg}(A)$. 
\begin{proposition}
    For a fixed $n \in \bbN$,
    suppose that $\rho_n$ satisfies
    \begin{equation}\label{eq:Assumption:DensityCondition}
        \lim\limits_{\delta \to 0} \int_{ \{ |\bz| > \delta \} } \, \frac{\rho_n(\bz)}{|\bz|^p} \rmd \bz  = \infty
   \end{equation}
    in addition to satisfying \eqref{eq:AssumptionsOnKernels1}. Then 
    there exists $\bv \in \frak{W}^{\rho_n,p}_{\bg}(A;\bbR^d)$ such that
    \begin{equation}\label{eq:ExistenceOfMin}
        F_{n,\bg}(\bv,A) = \min_{ \bw \in \frak{W}^{\rho_n,p}_{\bg}(A;\bbR^d) } F_{n,\bg}(\bw,A)\,.
    \end{equation}
\end{proposition}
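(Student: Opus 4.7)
I would apply the direct method of the calculus of variations. Since $\bg \in W^{1,p}(\Omega;\bbR^d)$ and $\rho_n \in L^1(\bbR^d)$, the same calculation as in the proof of Lemma \ref{lma:BoundFromAbove} yields $\bg \in \frak{W}^{\rho_n,p}_\bg(A;\bbR^d)$ with $F_{n,\bg}(\bg,A) < \infty$; hence $m := \inf F_{n,\bg}(\cdot,A) \in [0,\infty)$. Let $\{\bv_k\}$ be a minimizing sequence and set $\bw_k := \bv_k - \bg$, which vanishes on the collar $D := \{\bx \in \Omega : \dist(\bx,\Omega\setminus A) < r_0\}$. The coercivity bound \eqref{eq:CoercivityOfEnergy} and the triangle inequality give $\sup_k [\bw_k]_{\frak{W}^{\rho_n,p}(A)} < \infty$.

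Next, I would derive a nonlocal Poincar\'e inequality on $A$. Writing $K(\bz) := \rho_n(\bz)/|\bz|^p$, the identity $\bw_k(\by) = 0$ for $\by \in D \cap A$ gives
\begin{equation*}
|\bw_k(\bx)|^p \int_{D \cap A} K(\bx-\by)\, \rmd \by = \int_{D \cap A} K(\bx-\by)|\bw_k(\bx)-\bw_k(\by)|^p\, \rmd \by
\end{equation*}
for $\bx \in A \setminus D$. The constant $c_* := \inf_{\bx \in \overline{A\setminus D}} \int_{D\cap A} K(\bx-\by)\, \rmd \by$ is strictly positive because $D \cap A$ is an open neighborhood of $\p(A \setminus D)$ of positive measure, $\rho_n$ is a nontrivial $L^1$ kernel, and continuity together with compactness of $\overline{A\setminus D}$ upgrades the pointwise positivity to a uniform lower bound. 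Integrating the identity above in $\bx$ yields $c_* \|\bw_k\|_{L^p(A)}^p \leq [\bw_k]_{\frak{W}^{\rho_n,p}(A)}^p$, so $\|\bv_k\|_{L^p(A)}$ is uniformly bounded.

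With $\|\bv_k\|_{L^p(A)} + [\bv_k]_{\frak{W}^{\rho_n,p}(A)}$ uniformly bounded, I would invoke the compact embedding $\frak{W}^{\rho_n,p}(A;\bbR^d) \hookrightarrow L^p(A;\bbR^d)$, which holds under \eqref{eq:Assumption:DensityCondition}: the non-integrability of $K$ forces the nonlocal seminorm to control enough of the $L^p$-translations for the Riesz--Fr\'echet--Kolmogorov criterion to apply. Along a subsequence, $\bv_k \to \bv$ strongly in $L^p(A;\bbR^d)$ and pointwise a.e., and setting $\bv := \bg$ on $\Omega \setminus A$ produces $\bv \in L^p(\Omega;\bbR^d)$ with $\bv = \bg$ on $D$. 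The pointwise convergence transfers to $\cD\bv_k \to \cD\bv$ a.e.\ on $A\times A$, so Fatou's lemma applied to the nonnegative integrand $\rho_n(\bx-\by)\Phi(|s[\cdot](\bx,\by)|)$ yields
\begin{equation*}
F_n(\bv,A) \leq \liminf_{k\to\infty} F_n(\bv_k,A) = m.
\end{equation*}
The finiteness of $F_n(\bv,A)$ combined with the lower growth bound in \eqref{eq:PDEnergy:ConditionsOnG} places $\bv$ in $\frak{W}^{\rho_n,p}(\Omega;\bbR^d)$, so $\bv \in \frak{W}^{\rho_n,p}_{\bg}(A;\bbR^d)$ and $F_{n,\bg}(\bv,A) = m$, establishing \eqref{eq:ExistenceOfMin}.

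The principal obstacle is establishing the compact embedding $\frak{W}^{\rho_n,p}(A) \hookrightarrow L^p(A)$ under \eqref{eq:Assumption:DensityCondition}. The Riesz--Fr\'echet--Kolmogorov criterion reduces this to uniform $L^p$-equicontinuity of translations for bounded sequences in the seminorm, which requires a careful truncation argument exploiting the divergence of $\int_{|\bz| > \delta} K(\bz)\, \rmd \bz$ as $\delta \to 0$: a localized trick writing small translations in terms of the singular weighted difference quotients is needed. A secondary technical point is the uniform positivity of the Poincar\'e constant $c_*$, which hinges on the geometric configuration of $D$ together with the mass properties of $\rho_n$ dictated by \eqref{eq:Assumption:DensityCondition}.
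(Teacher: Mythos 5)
Your overall architecture (direct method, coercivity from \eqref{eq:CoercivityOfEnergy}, a nonlocal Poincar\'e inequality to bound $\Vnorm{\bv_k}_{L^p}$, compactness under \eqref{eq:Assumption:DensityCondition}, and Fatou's lemma for lower semicontinuity) is exactly the paper's, which handles the Poincar\'e and compactness steps by citation to \cite{Ponce-poincare,BBM} and to \cite[Theorem A.1]{Du-Mengesha-Tian-compactness} respectively. The genuine problem is your self-contained derivation of the Poincar\'e inequality. The constant $c_* = \inf_{\bx\in\overline{A\setminus D}}\int_{D\cap A}\rho_n(\bx-\by)\,|\bx-\by|^{-p}\,\rmd\by$ need not be positive: for a fixed $n$, the hypotheses \eqref{eq:AssumptionsOnKernels1} and \eqref{eq:Assumption:DensityCondition} constrain $\rho_n$ only through its total mass and its behavior near the origin, and they permit $\supp \rho_n \subset B(\boldsymbol{0},\eta)$ with $\eta$ arbitrarily small --- this is in fact the standard peridynamic situation, $\eta$ being the horizon. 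If $\dist(\bx, D\cap A) > \eta$ for some $\bx \in A\setminus D$, which happens whenever the horizon is smaller than the depth of $A\setminus D$, then $\int_{D\cap A} K(\bx-\by)\,\rmd\by = 0$, so the pointwise positivity you invoke already fails and no continuity-plus-compactness argument can rescue it. The correct nonlocal Poincar\'e inequality for functions vanishing on a collar must propagate the estimate from $D$ into the interior of $A$, e.g.\ by a chaining or covering argument over overlapping balls of radius comparable to the horizon, or by the iteration in \cite{Ponce-poincare}; this is the missing idea.

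A secondary remark: you correctly identify the compact embedding of bounded sets of $\frak{W}^{\rho_n,p}(\Omega;\bbR^d)$ into $L^p$ under \eqref{eq:Assumption:DensityCondition} as the other nontrivial ingredient, but you leave it as an acknowledged obstacle rather than proving it or citing a result; the paper resolves this by invoking \cite[Theorem A.1]{Du-Mengesha-Tian-compactness}, and that (or an equivalent Riesz--Fr\'echet--Kolmogorov argument carried out in full) is needed to close the proof. The remaining steps --- almost-everywhere convergence of the difference quotients, Fatou's lemma, and verifying that the limit stays in $\frak{W}^{\rho_n,p}_{\bg}(A;\bbR^d)$ because $\bv_k=\bg$ on the collar --- are fine and match the paper.
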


\begin{proof}
    The proof uses direct methods. Fix $n \in \bbN$, and let $\{ \bv_k \}$ be a minimizing sequence of the functional $\bv\mapsto F_{n, \bg}(\bv, A).$  Then
    \begin{equation*}
        [\bv_k]_{\frak{W}^{\rho_n,p}(A)}^p \leq C \left( F_{n,\bg}(\bv_k,A) + |A| \right)
    \end{equation*}
    for all $k \in \bbN$, and $\bv_k = \bg$ on $\Omega \setminus A$. Therefore $[\bv_k]_{\frak{W}^{\rho_n,p}(\Omega)}^p$ is a bounded sequence in $k$.
    Furthermore, by the nonlocal Poincar\'e inequality \cite{Ponce-poincare,BBM} 
    \begin{equation*}
        \Vnorm{\bv_k}_{L^p(\Omega)} \leq \Vnorm{ \bv_k - \bg }_{L^p(\Omega} + \Vnorm{\bg}_{L^p(\Omega)} \leq C \left( [\bv_k]_{\frak{W}^{\rho_n,p}(\Omega)} + \Vnorm{\bg}_{L^p(\Omega)} \right)\,,
    \end{equation*}
    and so $\{ \bv_k \}_k $ is bounded in $\frak{W}^{\rho_n,p}(\Omega;\bbR^d)$.
    Therefore
    by an application of the compactness lemma in \cite[Theorem A.1]{Du-Mengesha-Tian-compactness}  for kernels $\rho_{n}$ satisfying \eqref{eq:AssumptionsOnKernels1} and \eqref{eq:Assumption:DensityCondition}, $\bv_k$ converges strongly in $L^p_{loc}(\Omega;\bbR^d)$ to a function $\bv \in L^{p}_{loc}(\Omega;\bbR^d)$. But $\bv_k = \bg$ for all $k\geq 1$ and all $\bx \in \Omega$ with $\dist(\bx, \Omega \setminus A) < r_0$, the convergence is global in $L^{p}(\Omega;\mathbb{R}^d)$ and so $\bv\in \frak{W}^{\rho_n,p}_{\bg}(\Omega;\bbR^{d})$. Moreover, by Fatou's lemma applied to the sequence $\{ s[\bv_k](\bx,\by) \}_k$
    \begin{equation*}
        F_{n,\bg}(\bv,A) \leq \liminf_{k \to \infty} F_{n,\bg}(\bv_k,A)\,.
    \end{equation*}
    It follows that $\bv$ satisfies \eqref{eq:ExistenceOfMin}
\end{proof}
Now following a procedure similar to the one described in this section, one can prove that 
	\begin{equation*}
		\GammalimLp_{n \to \infty} F_{n,\bg}(\bv,A) = F_{\infty,\bg}(\bv,A)\,,
	\end{equation*}
	where
	\begin{equation*}
		F_{\infty,\bg}(\bv,A) := \begin{cases}
			\intdm{A}{f_\infty(\grad \bv(\bx))}{\bx}\,, & \bv - \bg \in W^{1,p}_0(A;\bbR^d)\\
			+\infty\,, & \text{otherwise, }
		\end{cases}
	\end{equation*}
	where the $\Gamma$-limit is taken with respect to the strong topology on $L^{p}(\Omega;\bbR^d)$. Moreover, given a sequence of $\bv_n\in \frak{W}^{\rho_n,p}_{\bg}(A;\bbR^d)$ such that $\{F_{n,\bg}(\bv_n,A)\}_n$ is uniformly bounded, we have that $\{\bv_n\}_n$ is compact in the strong topology of $L^{p}(\Omega;\mathbb{R}^{d})$ \cite{Ponce-poincare, BBM} with a limit $\bv$ such that $\bv-\bg \in W^{1,p}_{0}(A;\bbR^{d})$. Thus the sequence of functionals $\{F_n(\cdot, A)\}_n$ is equicoercive in the strong topology on $L^p(\Omega;\bbR^d)$. We may now apply \cite[Theorem 7.8 and Corollary 7.202]{DalMaso} to conclude that minimizers of $F_{n, \bg}(\cdot, A)$ converge to a minimizer of $F_{\infty, \bg}(\cdot, A)$. We summarize these results in the following proposition. 

\begin{proposition}
	With the assumptions of this section, 
	suppose a sequence $\{\bv_n \} \subset L^{p}(\Omega;\bbR^d)$ satisfies
	\begin{equation*}
		F_{n,\bg}(\bv_n,A) = \min \{ F_{n,\bg}(\bw,A) \, : \, \bw \in \frak{W}^{\rho_n,p}_{\bg}(A;\bbR^d) \}\,.
	\end{equation*}
	Then there exists a function $\bv\in L^{p}(\Omega;\mathbb{R}^{d})$ satisfying $\bv - \bg \in W^{1,p}_0(A;\bbR^d)$ such that $\bv_n \to \bv$ strongly in $L^{mp}(\Omega;\bbR^d)$ and that
	\begin{equation*}
		F_{\infty,\bg}(\bv,A) = \min \{ F_{\infty,\bg}(\bw,A) \, : \, \bw-\bg \in W^{1,p}_{0}(A;\bbR^d) \}\,.
	\end{equation*}
\end{proposition}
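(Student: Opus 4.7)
My plan is to establish the proposition by appealing to the standard variational framework provided by the theory of $\Gamma$-convergence: once we verify (a) $\Gamma$-convergence $F_{n,\bg} \GammalimLp F_{\infty,\bg}$ in the strong $L^p(\Omega;\bbR^d)$ topology and (b) equicoercivity of the sequence $\{F_{n,\bg}(\cdot,A)\}_n$ in that same topology, the fundamental theorem of $\Gamma$-convergence (as codified in \cite[Theorem 7.8, Corollary 7.202]{DalMaso}) yields both the existence of a cluster point $\bv$ of the minimizing sequence $\{\bv_n\}$ and the fact that any such cluster point is a minimizer of the $\Gamma$-limit.

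First, I would confirm the $\Gamma$-convergence statement $F_{n,\bg} \to F_{\infty,\bg}$. The unconstrained version has been established in Theorem \ref{thm:LocalGammaConvergence}, so I would indicate that the argument adapts to the case with Dirichlet data by repeating the blueprint of Section \ref{sec:Loc} (integral representation via the De Giorgi--Letta measure criterion) while encoding the boundary condition in the definition of the function space $\frak{W}^{\rho_n,p}_{\bg}$. The only delicate modification is in the limsup inequality, where recovery sequences for $\bv \in W^{1,p}(\Omega;\bbR^d)$ with $\bv - \bg \in W^{1,p}_0(A;\bbR^d)$ must be built to preserve the boundary layer; this is done by using a cutoff in a small neighborhood of $\p A$ of width less than $r_0$, so that the modified sequence coincides with $\bg$ on the prescribed boundary strip without affecting the limit value.

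Second, to prove equicoercivity, I would take any sequence $\{\bv_n\}$ with $\sup_n F_{n,\bg}(\bv_n,A) < \infty$, so in particular $\bv_n \in \frak{W}^{\rho_n,p}_{\bg}(A;\bbR^d)$. By the coercivity estimate \eqref{eq:CoercivityOfEnergy}, the seminorms $[\bv_n]_{\frak{W}^{\rho_n,p}(\Omega)}$ remain uniformly bounded (the behavior on $\Omega\setminus A$ is controlled by $\bg$). Combined with the nonlocal Poincar\'e inequality \cite{Ponce-poincare,BBM}, the $L^p$ norms of $\bv_n$ are uniformly bounded as well. Then the compactness theorem \cite[Theorem A.1]{Du-Mengesha-Tian-compactness}, together with the assumption \eqref{eq:Assumption:DensityCondition} on the kernels $\rho_n$, supplies a subsequence converging strongly in $L^p_{loc}(\Omega;\bbR^d)$ to some $\bv\in W^{1,p}(A;\bbR^d)$. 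Since $\bv_n \equiv \bg$ on the fixed boundary strip, the convergence is in fact strong in $L^p(\Omega;\bbR^d)$, and one can pass the boundary condition to the limit to obtain $\bv - \bg \in W^{1,p}_0(A;\bbR^d)$.

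Finally, combining (a) and (b), I invoke the fundamental theorem of $\Gamma$-convergence: the minimizers $\bv_n$ of the equicoercive $\Gamma$-converging sequence $F_{n,\bg}(\cdot,A)$ are precompact in $L^p$, and every cluster point $\bv$ is a minimizer of $F_{\infty,\bg}(\cdot,A)$; moreover the minimum values converge. I anticipate the main obstacle to be the careful verification of the boundary condition in the $\Gamma$-$\limsup$ inequality (ensuring that cutoffs used in recovery sequences do not inflate the energy in the limit) and the precise use of the localizing hypothesis \eqref{eq:Assumption:DensityCondition}, which is essential for the uniform compactness across the varying spaces $\frak{W}^{\rho_n,p}$; without it, one would only have weak-type control and not strong $L^p$ precompactness.
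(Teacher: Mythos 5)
Your proposal is correct and follows essentially the same route as the paper: establish $\Gamma$-convergence of the constrained functionals $F_{n,\bg}(\cdot,A)$ by repeating the argument of Section \ref{sec:Loc}, establish equicoercivity in the strong $L^p$ topology via the nonlocal Poincar\'e inequality and the compactness results of \cite{Ponce-poincare,BBM}, and then invoke \cite[Theorem 7.8 and Corollary 7.202]{DalMaso}. The only minor discrepancy is that for the localization limit the paper relies on the Bourgain--Brezis--Mironescu compactness for kernels satisfying \eqref{eq:AssumptionsOnKernels1}, rather than on \eqref{eq:Assumption:DensityCondition} (which was needed only for the fixed-$n$ existence result), but this does not affect the validity of your argument.
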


\begin{remark}\label{gamma-conv-remark}
    Using the generalized nonlinear strain $s_{m}[\bv](\bx, \by)$ for $m\geq 1$ introduced in Section \ref{sec:Intro}, one can also study the variational convergence of functionals of the form
    \[F_{n}^{m}(\bv):=\int_\Omega \int_{\Omega} \rho_n(\bx-\by) \Phi(|s_m[\bv](\bx,\by)|) \, \rmd \by \, \rmd \bx\,, \quad \bv\in \frak{W}^{\rho_n,mp}(\Omega;\bbR^d)\,.
 \]
 Following the same procedure as above, one can show that the $\Gamma$-limit in the strong topology of $L^{mp}(\Omega;\mathbb{R}^d)$ of 
 the sequence of the extended functionals $\overline{F}^{m}_{n}$ has an integral representation given by 
 \[\overline{F}_{\infty}^{m}(\bv)= \int_{\Omega} f_{\infty}^{m}(\nabla \bv (\bx)) \, \rmd \bx \,, \quad \text{for all $\bv\in W^{1,mp}(\Omega;\bbR^{d})$}, 
 \]
 where $f^{m}_{\infty}: \bbR^{d\times d}\to \mathbb{R}$ is a quasiconvex functional with $mp$-growth. When $m=1$, we denote $\overline{F}_{\infty}^{1}$ and $f^{1}_{\infty}$ by $\overline{F}_{\infty}$ and $f_{\infty}$ respectively, coinciding with the notation previously set. 
\end{remark}

\section{Properties of the limiting functional and its density}\label{sec:LocProp}

The proof of Theorem \ref{thm:Intro:GammaConvergence} will follow from Theorem \ref{thm:LocalGammaConvergence} with $A = \Omega$ along with further properties of the $\Gamma$-limit proved in Theorem \ref{thm:InvarianceOfIntegrand}, Theorem \ref{thm:ZeroSetOfFxnal}, and Corollary \ref{cor:CoercivityOfIntegrand} below.

\subsection{Estimate for the limiting functional}
In the previous section, we have shown that the $\Gamma$-limit of the sequence of functionals $\overline{F}_{n}$ has an integral representation with a density function $f_{\infty}$ that is quasiconvex. Computing an explicit characterization of $f_{\infty}$ in terms of the density function $\Phi$ and the sequence $\rho_n$ is a nontrivial task. 
However, the functionals $F_n$ lie in the general framework of nonlocal models studied in \cite{ponce2004new}. In that work, a successful attempt has been made to compute the density function for the $\Gamma$-limit (with respect to the strong topology on $L^1(\bbR^d)$ and $C^1(\bbR^d)$) in the very special cases when $\bv\mapsto \Phi(s[\bv])$ is convex or concave. This result is not applicable in our case as  $\bv\mapsto \Phi(s[\bv])$  is nonconvex.
In the same work \cite{ponce2004new} and for general $\Phi$, upper and lower bounds for the $\Gamma$-limit are obtained. 
The next proposition states these results; we sketch the proof following the same techniques used in \cite{ponce2004new} but adapted for the vector-valued case in the strong $L^p(\Omega;\bbR^d)$ topology for the specific form of $\Phi$.
\begin{proposition} \label{thm:BoundsOnPeridynamicEnergy}
	Suppose that $1<p < \infty$ and let $\overline{F}_{\infty}$ be as in Theorem \ref{thm:LocalGammaConvergence}.
 Then 
	\begin{equation*}
		F_{\text{LB}}(\bv) \leq \overline{F}_{\infty}(\bv) \leq F_{\text{UB}}(\bv) \qquad \text{ for any } \bv \in W^{1,p}(\Omega;\bbR^d),
	\end{equation*}
	where the lower and upper bound functionals $F_{LB}$ and $F_{UB}$ are defined as 
	\begin{equation*}
		F_{\text{LB}}(\bv) := \int_{\Omega}  \fint_{\bbS^{d-1}} \Phi((|\nabla \bv (\bx) \bsomega|-1)_+) \, \rmd \sigma(\bsomega) \, \rmd \bx,\quad\quad (x_+ = \max \{ x,0 \})
	\end{equation*}
    and
	\begin{equation*}
		F_{\text{UB}}(\bv) := \int_{\Omega} Q \wt{\Phi}(\grad \bv(\bx)) \, \rmd \bx
	\end{equation*}
	where $Q \wt{\Phi}$ is the quasiconvexfication of $\wt{\Phi}$ and  
\begin{equation}\label{eq:BoundsOnIntegrandDefns2}
  \wt{\Phi}(\bbF) :=  \fint_{\bbS^{d-1}} {\Phi}(||\bbF \bsomega| -1|) \, \rmd \sigma(\bsomega) \,.
	\end{equation}
\end{proposition}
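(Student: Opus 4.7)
The approach is to treat the two bounds separately, obtaining the lower bound by replacing $\Phi(|s[\bv]|)$ with a convex-in-$\bv$ surrogate and invoking a Bourgain--Brezis--Mironescu-type lower semicontinuity, and the upper bound by testing the $\Gamma$-limsup against affine configurations and then passing to the quasiconvex envelope.

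For the lower bound, my plan is to first observe that, since $\Phi$ is nondecreasing on $[0,\infty)$ and $\bigl||\cD\bv|-1\bigr|\ge (|\cD\bv|-1)_+$ pointwise, we have the pointwise inequality
\begin{equation*}
\Phi\bigl(\bigl||\cD\bv|-1\bigr|\bigr) \ge \Phi\bigl((|\cD\bv|-1)_+\bigr)=:g(\cD\bv),
\end{equation*}
so that $F_n(\bv)\ge G_n(\bv):=\int_{\Omega}\int_{\Omega}\rho_n(\bx-\by)\,g(\cD\bv(\bx,\by))\,\rmd\by\,\rmd\bx$. A quick check shows that $g(\bz)=\Phi\bigl((|\bz|-1)_+\bigr)$ is convex in $\bz$ (it is the composition of the nondecreasing convex $\Phi$ with the convex nonnegative map $\bz\mapsto(|\bz|-1)_+$), satisfies $g(\boldsymbol{0})=0$, and has $p$-growth. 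For convex integrands of this type a Ponce-style lower-semicontinuity result (see \cite{ponce2004new}) yields, for any $\bv_n\to\bv$ strongly in $L^p(\Omega;\bbR^d)$,
\begin{equation*}
\liminf_{n\to\infty} G_n(\bv_n) \ge \int_{\Omega} \fint_{\bbS^{d-1}} g(\nabla\bv(\bx)\bsomega)\,\rmd\sigma(\bsomega)\,\rmd\bx = F_{\text{LB}}(\bv),
\end{equation*}
and taking the infimum over recovery sequences gives $\overline{F}_\infty(\bv)\ge F_{\text{LB}}(\bv)$.

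For the upper bound, the plan is to test with the constant recovery sequence $\bv_n\equiv\ba$ on balls $B\Subset\Omega$, where $\ba(\bx)=\bbF\bx$. Writing $\by=\bx+r\bsomega$ and using radiality of $\rho_n$ together with \eqref{eq:AssumptionsOnKernels1}, a direct polar-coordinate computation gives
\begin{equation*}
\lim_{n\to\infty}F_n(\ba,B) = |B|\fint_{\bbS^{d-1}}\Phi\bigl(\bigl||\bbF\bsomega|-1\bigr|\bigr)\,\rmd\sigma(\bsomega)=|B|\,\wt\Phi(\bbF).
\end{equation*}
Since the constant sequence $\{\ba\}$ is admissible for the $\Gamma$-limsup, the integral representation from Theorem~\ref{thm:LocalGammaConvergence} yields $|B|f_\infty(\bbF)=\overline{F}_\infty(\ba,B)\le|B|\wt\Phi(\bbF)$, so $f_\infty\le\wt\Phi$ pointwise on $\bbR^{d\times d}$. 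Because $f_\infty$ is quasiconvex, the definition of the quasiconvex envelope as the largest quasiconvex minorant forces $f_\infty\le Q\wt\Phi$ pointwise. Integrating against $\nabla\bv$ for $\bv\in W^{1,p}(\Omega;\bbR^d)$ then gives $\overline{F}_\infty(\bv)\le F_{\text{UB}}(\bv)$.

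The main obstacle, and the one that requires the most care, is the lower bound step, specifically the verification that the BBM-type lower semicontinuity result applies to the surrogate integrand $g$. This amounts to checking convexity, $p$-growth, and the normalization $g(\boldsymbol{0})=0$ under the assumptions \eqref{eq:PDEnergy:ConditionsOnG} on $\Phi$, and then carefully adapting the scalar Ponce-type argument to the vector-valued setting with the localizing kernels $\rho_n$; convexity of $g$ plus the $L^p$ strong convergence of $\bv_n$ are the two ingredients that make this work. The upper bound is essentially a direct calculation plus the soft-analysis fact that quasiconvexity forces $f_\infty\le Q\wt\Phi$; the only minor subtlety there is the Borel-measurability of $Q\wt\Phi$, which follows from its $p$-growth and standard relaxation results.
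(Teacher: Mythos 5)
Your proposal is correct, and it splits naturally into a half that matches the paper and a half that does not. The lower bound is essentially the paper's argument: you identify the convex surrogate $g(\bz)=\Phi((|\bz|-1)_+)$ directly from the pointwise inequality $\bigl||z|-1\bigr|\geq(|z|-1)_+$ together with monotonicity of $\Phi$, whereas the paper first replaces $\varphi(t)=\Phi(||t|-1|)$ by its convexification $C\varphi$ and only at the end computes $C\varphi(t)=\Phi((|t|-1)_+)$ via Carath\'eodory's characterization; the two surrogates coincide, and the lower-semicontinuity step you defer to \cite{ponce2004new} is exactly what the paper carries out by hand (convolve $\bv_n$ with an approximate identity $\eta_\delta$, push the mollifier inside $g$ by Jensen's inequality using convexity, pass $n\to\infty$ in $C^2(\overline{\Omega_r})$ for fixed $\delta$, then send $\delta\to 0$ and $r\to 0$), so your two ``key ingredients'' are precisely the right ones. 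The upper bound is where you genuinely diverge: the paper works at the level of functionals, showing that the constant recovery sequence gives an upper $\Gamma$-limit in the weak $W^{1,p}$ topology bounded by $\int_\Omega\wt{\Phi}(\nabla\bv)\,\rmd\bx$ for \emph{every} $\bv\in W^{1,p}(\Omega;\bbR^d)$ (which requires a Lipschitz estimate on $\Phi$ and density of $C^2(\overline{\Omega};\bbR^d)$ to pass from smooth to general Sobolev maps), and then invokes sequential weak lower semicontinuity of the $\Gamma$-$\limsup$ to replace $\wt{\Phi}$ by $Q\wt{\Phi}$; you instead test only affine maps, read off $f_\infty(\bbF)\leq\wt{\Phi}(\bbF)$ from the integral representation of Theorem \ref{thm:LocalGammaConvergence}, and use quasiconvexity of $f_\infty$ plus the characterization of $Q\wt{\Phi}$ as the largest quasiconvex minorant to conclude $f_\infty\leq Q\wt{\Phi}$ pointwise. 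Your route is shorter and is legitimate here, since the proposition is stated for $\overline{F}_\infty$ ``as in Theorem \ref{thm:LocalGammaConvergence}'', so the quasiconvex integral representation is already available; the paper's route buys a self-contained bound on the $\Gamma$-$\limsup$ that does not presuppose the representation theorem and only needs the limit computation for smooth test functions rather than the pointwise comparison of densities.
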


\begin{proof}
We show the upper bound first. To begin, by the compact embedding of $W^{1,p}(\Omega;\bbR^d)$ in $L^{p}(\Omega;\mathbb{R}^d)$ we have
\[
\GammalimsupLp_{n\to \infty} F_{n}(\bv) \leq \GammalimsupW1p_{n\to \infty} F_{n}(\bv),\quad \text{for $\bv\in W^{1,p}(\Omega;\bbR^d)$,}
\]
where the latter is the upper $\Gamma$-limit with respect to the weak topology in $W^{1,p}(\Omega;\bbR^d)$.  Next, we will show that 
\begin{equation}\label{lim-eqn-bound}
\GammalimsupW1p_{n\to \infty} F_{n}(\bv)\leq \int_{\Omega} \wt{\Phi}(\nabla \bv (\bx)) \, \rmd \bx 
\quad\text{for any $\bv\in W^{1,p}(\Omega;\bbR^d)$}
\end{equation}
where $\wt{\Phi}$ is as given in \eqref{eq:BoundsOnIntegrandDefns2}. 
By the weak lower semicontinuity of $\GammalimsupW1p_{n\to \infty} F_{n}$, it follows that 
\[
\GammalimsupW1p_{n\to \infty} F_{n}(\bv)\leq\int_{\Omega} Q\wt{\Phi} (\nabla \bv (\bx)) \, \rmd \bx\,,
\]
which proves the upper bound. To establish \eqref{lim-eqn-bound}, we take $\bv_n = \bv\in  W^{1,p}(\Omega;\bbR^d)$, and we claim that it serves as a recovery sequence, i.e.
\begin{equation}\label{limit-compute-formula}
\lim_{n\to \infty} \int_{\Omega}\int_{\Omega}\rho_{n}(\by-\bx)\Phi(|s[\bv] (\bx, \by)|)\, \rmd \by \, \rmd \bx = \int_{\Omega}  \fint_{\bbS^{d-1}} {\Phi}(||\nabla \bv(\bx) \bsomega| -1|) \, \rmd \sigma(\bsomega) \, \rmd \bx.
\end{equation}
To that end, first note that using the convexity and $p$-growth of $\Phi$, for any $\bv, \bw \in W^{1,p}(\Omega;\bbR^d)$ and $\bx,\by \in \Omega$, we have that (see \cite[Proposition 4.64]{fonseca2007modern})
\[
\Phi(|s[\bv](\bx, \by)|)-\Phi(|s[\bw](\bx, \by)|) \leq C(1 + |\cD\bv(\bx,\by)|^{p-1}  + |\cD\bw (\bx, \by)|^{p-1})|\cD(\bv - \bw) (\bx, \by)|.
\]
Applying H\"older's inequality, we have that for each $n \in \bbN$
\[
\begin{split}
\lim_{n\to \infty} &\left|\int_{\Omega}\int_{\Omega}\rho_{n}(\by-\bx)\Phi(|s[\bv] (\bx, \by)|)\, \rmd \by \, \rmd \bx -  \int_{\Omega}\int_{\Omega}\rho_{n}(\by-\bx)\Phi(|s[\bw] (\bx, \by)|)\, \rmd \by \, \rmd \bx\right|\\
&\leq C(1 + \|\nabla \bv\|^{p-1}_{L^{p}(\Omega)} + \|\nabla \bw\|^{p-1}_{L^{p}(\Omega)})\|\nabla \bv-\nabla \bw\|_{L^{p}(\Omega)}.
\end{split}
\]
Therefore, by density of $C^{2}(\overline{\Omega};\mathbb{R}^{d})$ in $W^{1,p}(\Omega;\bbR^d)$, it suffices to demonstrate \eqref{limit-compute-formula} for $\bv\in C^{2}(\overline{\Omega};\mathbb{R}^{d})$. But this has been done in \cite[Proposition 4.1]{ponce2004new}, see also \cite[Theorem 2]{BBM}. The upper bound is therefore established. 

Next, we prove the lower bound by proving that 
\begin{equation}\label{limit-compute-lower}
\GammaliminfLp_{n\to \infty}F_{n}(\bv) \geq \int_{\Omega} \fint_{\bbS^{d-1}} C\varphi(|\nabla \bv (\bx) \bsomega|) \, \rmd \sigma(\bsomega) \, \rmd \bx
\end{equation}
where $C\varphi$ is the convexification of $\varphi(t) = \Phi(||t|-1|).$ To that end, let $\bv\in W^{1,p}(\Omega;\bbR^d)$ and $\bv_{n} \to \bv$ in $L^{p}(\Omega;\bbR^{d})$. Then we have that 
\[
F_{n}(\bv_n) = \int_{\Omega}\int_{\Omega}\rho_n (\by-\bx)\varphi (|\cD\bv_n (\bx, \by)|)\, \rmd \by \, \rmd \bx \geq  \int_{\Omega}\int_{\Omega} \rho_n (\by-\bx) C\varphi (|\cD\bv_n (\bx, \by)|) \, \rmd \by \, \rmd \bx.
\]
Now let $\eta\in C_{c}(B(\boldsymbol{0},1))$ be a radial function such that $\int_{B(\boldsymbol{0},1)} \eta(\bz) \,  \rmd \bz = 1$ and $\eta_{\delta} (\bz) = {1\over \delta^d} \eta\left({|\bz|\over \delta}\right)$ for $\delta > 0$ serve as an approximation to the identity. Then by Jensen's inequality we have that for any $r>0$ small, for all $\delta <r$, for any $n \geq 1$
\[
\int_{\Omega}\int_{\Omega} \rho_n (\by-\bx) C\varphi (|\cD\bv_n (\bx, \by)|) \, \rmd \by \, \rmd \bx \geq \int_{\Omega_r}\int_{\Omega_r} \rho_n (\by-\bx) C\varphi (|\cD(\eta_\delta \ast \bv_n) (\bx, \by)|) \, \rmd \by \, \rmd \bx
\]
where $\Omega_r = \{\bx\in \Omega: \dist(\bx, \partial \Omega) >r \}$. For a fixed $\delta < r$, 
$\eta_\delta \ast \bv_n \to \eta_\delta \ast \bv$ in $C^{2}(\overline{\Omega_r}),$ and so for any $\delta <r$,
\[
\lim_{n\to \infty} \int_{\Omega_r}\int_{\Omega_r} \rho_n (\by-\bx) C\varphi (|\cD(\eta_\delta \ast \bv_n) (\bx, \by)|) \, \rmd \by \, \rmd \bx = \int_{\Omega_r} \fint_{\bbS^{d-1}} C\varphi (|\nabla (\eta_\delta \ast \bv) (\bx) \bsomega|) \, \rmd \sigma(\bsomega) \, \rmd \bx.
\]
We now let $\delta\to 0$ to conclude that
\[
\liminf_{n\to \infty} F_n(\bv_n) \geq \int_{\Omega_r} \fint_{\bbS^{d-1}} C\varphi (|\nabla\bv (\bx)\bsomega| ) \, \rmd \sigma(\bsomega)  \, \rmd \bx. 
\]
Inequality \eqref{limit-compute-lower} now follows after letting $r\to 0$. The formula for the convexification of $\varphi$ in terms of the function $\Phi$ follows from Carath\'eodory's theorem that characterizes the convexification 
    \begin{equation*}
        C\varphi(t) = \inf \left\{ \lambda \varphi(t_1) + (1-\lambda) \varphi(t_2) \, : \, t = \lambda t_1 + (1-\lambda)t_2\,, 0 \leq \lambda \leq 1 \right\}\,.
    \end{equation*}
    First, suppose $t$ is in the range $(-\infty,-1) \cap (1,\infty)$. Then the function $\big| |t| - 1 \big|$ is convex on that range.
    Therefore the function $\varphi(t) := \Phi(\big| |t| - 1 \big|)$, being the composition of the nondecreasing convex function $\Phi$ with $\big| |t| - 1 \big|$, is convex on that range. Hence $C\varphi(t) = \varphi(t)$ for $t \in (-\infty,-1) \cap (1,\infty)$. For $t \in [-1,1]$, since $0 \leq \varphi(t)$ for all $t$, it suffices to show that for all $t \in [-1,1]$ we can find $\lambda \in [0,1]$, $t_1$ and $t_2$ with $t = \lambda t_1 + (1-\lambda) t_2$ such that $\lambda \varphi(t_1) + (1-\lambda) \varphi(t_2) = 0$. For that we may choose,
    $\lambda = \frac{t+1}{2}$, $t_1 = 1$, $t_2 = -1$. We there conclude that $C\varphi(t) = \Phi((|t|-1)_{+})$.  
\end{proof}

\begin{remark}
Applying Theorem \ref{thm:LocalGammaConvergence} and Theorem \ref{thm:BoundsOnPeridynamicEnergy} to the function $\bv(\bx) = \bbF \bx$, 
we have for every matrix $\bbF \in \bbR^{d \times d}$
\begin{equation}\label{eq:BoundsOnIntegrand}
	\fint_{\bbS^{d-1}} \Phi((|\bbF \bsomega|-1)_+) \, \rmd \sigma(\bsomega) \leq f_\infty(\bbF) \leq Q\wt{\Phi}(\bbF)\,,
\end{equation}
where $\wt{\Phi}$ is given by \eqref{eq:BoundsOnIntegrandDefns2}. 

As a consequence, we see that, for matrices $\bbF\in \bbR^{d}$ such that $\bbF^T \bbF > \bbI$, we have $f_\infty(\bbF) = \wt{\Phi}(\bbF)$. 
Indeed, for such a matrix $\bbF$,  $|\bbF \bsomega| >1$, and therefore, $\Phi((|\bbF\bsomega| - 1)_+) = \Phi(|\bbF\bsomega| - 1)$ for all $\bsomega\in \bbS^{d-1}$.  It then follows from the inequality \eqref{eq:BoundsOnIntegrand} that 
\[
\begin{split}
    \wt{\Phi}(\bbF)&= \fint_{\bbS^{d-1}} \Phi(|\bbF \bsomega|-1) \, \rmd \sigma(\bsomega)
    =\fint_{\bbS^{d-1}} \Phi((|\bbF \bsomega|-1)_+) \, \rmd \sigma(\bsomega)\leq f_\infty(\bbF) \leq Q\wt{\Phi}(\bbF) \leq \wt{\Phi}(\bbF)\,
\end{split}
\]
where we used the fact that $Q \wt{\Phi} \leq \wt{\Phi}$. 

In the case of general nonlinear strain $s_m[\bv](\by,\bx)$, one can obtain an analogue of \eqref{eq:BoundsOnIntegrand} for $f_\infty^m$ in the same way:
    \begin{equation}\label{eq:BoundsOnGammaLimit:m}
	\fint_{\bbS^{d-1}} \Phi(m^{-1}(|\bbF \bsomega|^m-1)_+) \, \rmd \sigma(\bsomega) \leq f_\infty^m(\bbF) \leq Q \left[ \fint_{\bbS^{d-1}}  \Phi( m^{-1}(|\bbF \bsomega|^m - 1))  \, \rmd \sigma(\bsomega)  \right] \,.
\end{equation}
\end{remark}

\subsection{Invariances of the limiting density}

\begin{theorem}\label{thm:InvarianceOfIntegrand}
	The integrand $f_\infty$ obtained in Theorem \ref{thm:LocalGammaConvergence} is frame-indifferent, i.e.
	\begin{equation*}
		f_\infty(\bbU \bbF) = f_\infty(\bbF)
	\end{equation*}
	for any $\bbF \in \bbR^{d \times d}$ and any $\bbU \in \cO(d)$.
\end{theorem}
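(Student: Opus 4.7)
The plan is to exploit the fact that each functional $F_n$ depends on the vector field $\bv$ only through the pairwise distances $|\bv(\by)-\bv(\bx)|$, so applying an orthogonal transformation in the target cannot change the energy. I will then pass this invariance through the $\Gamma$-limit and use the integral representation on affine functions to extract the desired equality $f_\infty(\bbU \bbF) = f_\infty(\bbF)$.

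More precisely, fix $\bbU \in \cO(d)$ and, for any $\bv \in L^p(\Omega;\bbR^d)$, define $T_{\bbU} \bv := \bbU \bv$. Since $\bbU$ is an isometry of $\bbR^d$,
\begin{equation*}
    |T_{\bbU}\bv(\by) - T_{\bbU}\bv(\bx)| = |\bbU(\bv(\by)-\bv(\bx))| = |\bv(\by)-\bv(\bx)|,
\end{equation*}
so $s[T_{\bbU}\bv](\by,\bx) = s[\bv](\by,\bx)$ pointwise. Hence $F_n(T_{\bbU}\bv, A) = F_n(\bv,A)$ for every $n$ and every $A \in \cA_0(\Omega)$, and also $T_{\bbU}\bv \in \frak{W}^{\rho_n,p}(\Omega;\bbR^d)$ iff $\bv \in \frak{W}^{\rho_n,p}(\Omega;\bbR^d)$, so $\overline{F}_n(T_{\bbU}\bv,A) = \overline{F}_n(\bv,A)$.

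Next I transfer this invariance to the $\Gamma$-limit. Since $T_{\bbU}$ is a linear isometry of $L^p(\Omega;\bbR^d)$, a sequence $\bv_n \to \bv$ strongly in $L^p$ if and only if $T_{\bbU} \bv_n \to T_{\bbU}\bv$ strongly in $L^p$. From the definitions of $F'$ and $F''$ (or directly from the $\Gamma$-convergence of $\overline{F}_n$ to $\overline{F}_\infty$), this yields
\begin{equation*}
    \overline{F}_\infty(T_{\bbU}\bv, A) = \overline{F}_\infty(\bv, A)
    \qquad \text{for all } \bv \in L^p(\Omega;\bbR^d),\ A \in \cA_0(\Omega).
\end{equation*}

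Finally, I specialize to affine maps. Pick any $\bbF \in \bbR^{d\times d}$ and any ball $B(\bx_0,r) \subset \Omega$, and apply the displayed identity to $\bv(\bx) = \bbF\bx \in W^{1,p}(\Omega;\bbR^d)$, so that $T_{\bbU}\bv(\bx) = (\bbU \bbF)\bx$. The integral representation in Theorem~\ref{thm:LocalGammaConvergence} then gives
\begin{equation*}
    \int_{B(\bx_0,r)} f_\infty(\bbU \bbF)\, \rmd \bx \;=\; \overline{F}_\infty(T_{\bbU}\bv, B(\bx_0,r)) \;=\; \overline{F}_\infty(\bv, B(\bx_0,r)) \;=\; \int_{B(\bx_0,r)} f_\infty(\bbF)\, \rmd \bx,
\end{equation*}
so $|B(\bx_0,r)|\, f_\infty(\bbU\bbF) = |B(\bx_0,r)|\, f_\infty(\bbF)$, which yields $f_\infty(\bbU\bbF) = f_\infty(\bbF)$. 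There is no real obstacle: the only thing to check carefully is that the pointwise target-isometry invariance of $F_n$ is preserved under the $\Gamma$-limit operation, which is immediate from the fact that $T_{\bbU}$ is an $L^p$-isometry.
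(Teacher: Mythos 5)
Your proposal is correct and follows essentially the same route as the paper: the pointwise identity $F_n(\bbU\bv)=F_n(\bv)$ for $\bbU\in\cO(d)$, the observation that left-multiplication by $\bbU$ is an $L^p$-isometry so the invariance passes to the $\Gamma$-limit, and evaluation of the integral representation on affine maps to conclude $f_\infty(\bbU\bbF)=f_\infty(\bbF)$. The only cosmetic difference is that you localize to balls $B(\bx_0,r)$ while the paper works directly on $\Omega$, which changes nothing.
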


\begin{proof}
	Define $\bv(\bx) = \bbF \bx$ and $\bw(\bx) = \bbU \bbF \bx$. It suffices to show that
	\begin{equation}\label{eq:FrameIndiff:Pf1}
	\overline{F}_{\infty}(\bv) = \overline{F}_{\infty}(\bw)
	\end{equation}
	and the result will follow since $\grad \bv$ and $\grad \bw$ are both constant.
	To prove \eqref{eq:FrameIndiff:Pf1} note that by the definition of $F_n$
	\begin{equation*}
		F_n(\bu)=F_n(\bbU \bu)\,, \qquad \bu \in W^{1,p}(\Omega;\bbR^d)\,, \bbU \in \cO(d) \text{ a constant matrix.}
	\end{equation*}
	Let $\{ \bv_n \}_n$ converge strongly to $\bv$ in $L^p(\Omega;\bbR^d)$. Then $\bbU \bv_n$ converges to $\bw$ strongly in $L^p(\Omega;\bbR^d)$, and $I_n(\bv_n) = I_n(\bbU \bv_n)$. Thus 
	\begin{equation*}
		\liminf_{n \to\infty} I_n(\bv_n) = \liminf_{n \to\infty} I_n(\bbU \bv_n) \geq I_\infty(\bw)\,.
	\end{equation*}
	$I(\bw)$ is a lower bound on $\liminf_{n \to \infty} I_n(\bv_n)$ for any sequence $\bv_n$ converging to $\bv$ strongly in $L^p(\Omega;\bbR^d)$, so by definition of $\Gamma$-$\liminf$ we have $I_\infty(\bv) \geq I_\infty(\bw)$. Repeat the argument with a sequence $\bw_n$ converging to $\bw$ (then $\bbU^T \bw_n$ converges to $\bv$, etc.) to arrive at $I_\infty(\bv) \leq I_\infty(\bw)$.
\end{proof}

\begin{corollary}
	There exists a function $g_\infty : \mathrm{Sym}_d^+ \to \bbR$, where $\mathrm{Sym}_d^+$ is the space of $d \times d$ symmetric nonnegative definite matrices, such that for any $\bbF\in \mathbb{R}^{d\times d}$ 
	\begin{equation*}
		f_\infty(\bbF) = g_\infty(\bbF^T \bbF)\,.
	\end{equation*}
\end{corollary}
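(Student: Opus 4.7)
The plan is a direct application of the polar decomposition together with the frame-indifference property established in Theorem \ref{thm:InvarianceOfIntegrand}. The whole proof is essentially one line once the right definition is in place, so I do not anticipate any serious obstacle.

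First I would define the candidate function $g_\infty : \mathrm{Sym}_d^+ \to \bbR$ by
\begin{equation*}
    g_\infty(\bbC) := f_\infty(\bbC^{1/2}),
\end{equation*}
where $\bbC^{1/2}$ denotes the unique symmetric positive semidefinite square root of $\bbC \in \mathrm{Sym}_d^+$. This is well-defined since every $\bbC \in \mathrm{Sym}_d^+$ admits such a root (by diagonalization via the spectral theorem and taking nonnegative square roots of the eigenvalues).

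Next, given an arbitrary matrix $\bbF \in \bbR^{d \times d}$, I would invoke the polar decomposition to write $\bbF = \bbU \bbS$, where $\bbU \in \cO(d)$ and $\bbS = (\bbF^T \bbF)^{1/2} \in \mathrm{Sym}_d^+$. (When $\bbF$ is singular the orthogonal factor $\bbU$ is not unique, but this does not affect the argument since we only need the existence of one such $\bbU$.) Applying Theorem \ref{thm:InvarianceOfIntegrand} with this $\bbU$ gives
\begin{equation*}
    f_\infty(\bbF) = f_\infty(\bbU \bbS) = f_\infty(\bbS) = g_\infty(\bbS^2) = g_\infty(\bbF^T \bbF),
\end{equation*}
which is precisely the desired identity. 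The only subtle point is verifying that the definition $g_\infty(\bbC) = f_\infty(\bbC^{1/2})$ is consistent, i.e.\ that $f_\infty$ takes the same value on all matrices with the same right Cauchy--Green tensor $\bbF^T \bbF$; but this consistency is exactly what frame-indifference provides, since if $\bbF_1^T \bbF_1 = \bbF_2^T \bbF_2$ then the polar factors $\bbS_1 = \bbS_2$ coincide and each $\bbF_i = \bbU_i \bbS_i$ differs only by an orthogonal factor.
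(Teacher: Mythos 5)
Your proof is correct, and it is exactly the standard argument the paper implicitly relies on (the corollary is stated without proof as an immediate consequence of Theorem \ref{thm:InvarianceOfIntegrand}): define $g_\infty(\bbC) = f_\infty(\bbC^{1/2})$ and use the polar decomposition $\bbF = \bbU(\bbF^T\bbF)^{1/2}$ together with frame-indifference. The final "consistency" remark is actually superfluous given your definition, since $\bbC^{1/2}$ is unique and hence $g_\infty$ is well-defined without further checking, but this does not affect correctness.
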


\subsection{The zero set of the limiting density}
    
We now use \eqref{eq:BoundsOnIntegrand} to characterize the zero set of the integrand $f_\infty.$ 

\begin{theorem}\label{thm:ZeroSetOfFxnal}
	$f_\infty(\bbF) = 0$ if and only if $\bbF^T \bbF \leq \bbI$ in the sense of quadratic forms.
\end{theorem}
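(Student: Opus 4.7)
The proof splits into two implications, both of which hinge on the two-sided bound from Proposition~\ref{thm:BoundsOnPeridynamicEnergy}:
\begin{equation*}
\fint_{\bbS^{d-1}} \Phi((|\bbF\bsomega| - 1)_+) \, \rmd \sigma(\bsomega) \;\leq\; f_\infty(\bbF) \;\leq\; Q\wt{\Phi}(\bbF).
\end{equation*}

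\textit{Necessity.} If $f_\infty(\bbF) = 0$, the lower bound forces the nonnegative continuous integrand $\Phi((|\bbF\bsomega|-1)_+)$ to vanish $\sigma$-a.e., and hence identically, on $\bbS^{d-1}$. Since $\Phi$ vanishes only at $0$ by~\eqref{eq:PDEnergy:ConditionsOnG}, I conclude $|\bbF\bsomega| \leq 1$ for every unit vector $\bsomega$, which is exactly $\bbF^T\bbF \leq \bbI$ in the sense of quadratic forms.

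\textit{Sufficiency.} Given $\bbF^T\bbF \leq \bbI$, the upper bound reduces the task to showing $Q\wt{\Phi}(\bbF) = 0$. Note that $\wt{\Phi}(\bbG) = 0$ iff $|\bbG \bsomega| = 1$ for every unit $\bsomega$ iff $\bbG \in \cO(d)$. Using the definition of the quasiconvex envelope on a fixed bounded Lipschitz domain $D$, the plan is to produce a family of test maps $\bsphi \in W^{1,\infty}_0(D;\bbR^d)$ with $\bbF + \grad\bsphi \in \cO(d)$ on all but an arbitrarily small subset of $D$. Starting from the singular value decomposition $\bbF = \bbR \bbD \bbS^T$ with $\bbD = \mathrm{diag}(\sigma_1,\ldots,\sigma_d)$ and $\sigma_i \in [0,1]$, I first build a product sawtooth $\bspsi(\by) := (\phi_1(y_1),\ldots,\phi_d(y_d))$, where each $\phi_i\colon \bbR\to\bbR$ is a $1$-periodic piecewise affine scalar whose derivative alternates between the values $1-\sigma_i$ and $-(1+\sigma_i)$ on subintervals of lengths $(1+\sigma_i)/2$ and $(1-\sigma_i)/2$ respectively, so that $\phi_i$ has zero mean slope. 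Then $\sigma_i + \phi_i'(y_i) \in \{\pm 1\}$ a.e., hence $\bbD + \grad\bspsi(\by)$ is a diagonal $\pm 1$ matrix and so lies in $\cO(d)$ a.e. Rotating back via $\wt{\bsphi}(\bx) := \bbR\bspsi(\bbS^T\bx)$, I obtain $\bbF + \grad\wt{\bsphi}(\bx) = \bbR(\bbD + \grad\bspsi(\bbS^T\bx))\bbS^T \in \cO(d)$ a.e.

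To enforce zero boundary data, I rescale by $\wt{\bsphi}_k(\bx) := k^{-1}\wt{\bsphi}(k\bx)$, which leaves the gradient-valued set unchanged while shrinking $\Vnorm{\wt{\bsphi}_k}_{L^\infty}$ to $O(1/k)$, and then multiply by a cutoff $\chi_\eps \in C^\infty_c(D)$ equal to $1$ on $D_\eps := \{\bx\in D : \dist(\bx,\partial D) > \eps\}$ with $\Vnorm{\grad\chi_\eps}_{L^\infty} \leq C/\eps$. Setting $\bsphi := \chi_\eps \wt{\bsphi}_k \in W^{1,\infty}_0(D;\bbR^d)$, on $D_\eps$ the integrand $\wt{\Phi}(\bbF + \grad\bsphi)$ is identically zero. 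On the boundary layer $D\setminus D_\eps$ (of measure $O(\eps)$), balancing scales by taking $k = 1/\eps$ keeps $\grad\bsphi$ uniformly bounded, so the $p$-growth of $\wt{\Phi}$ bounds the integrand by a constant and the total contribution is $O(\eps)$. Sending $\eps \to 0$ gives $Q\wt{\Phi}(\bbF) = 0$, hence $f_\infty(\bbF) = 0$ since $f_\infty \geq 0$.

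The technical crux is the sawtooth construction: it realizes any matrix with singular values in $[0,1]$ as a fine-scale oscillatory $W^{1,\infty}$ deformation whose gradient lies in $\cO(d)$ almost everywhere, essentially computing the relevant (quasi)convex hull of $\cO(d)$. The SVD reduction decouples this problem into $d$ independent one-dimensional sawtooths, sidestepping the full convex-integration machinery; the underlying mechanism is exactly the one illustrated by the introductory one-dimensional sawtooth example, in which the nonlocal bond energies fail to distinguish length-preserving deformations from fine-scale folded ones.
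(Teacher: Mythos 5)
Your proposal is correct and follows essentially the same route as the paper: the necessity direction is identical (the lower bound from Proposition~\ref{thm:BoundsOnPeridynamicEnergy} forces $|\bbF\bsomega|\leq 1$ for all $\bsomega$), and the sufficiency direction uses the same SVD reduction to a diagonal matrix followed by the same product-of-one-dimensional-sawtooths lamination with gradient values $\pm 1$. The only cosmetic difference is the last step: you conclude $Q\wt{\Phi}(\bbF)=0$ via the Dacorogna representation formula with an explicit cutoff to enforce zero boundary data, whereas the paper instead invokes weak-$*$ lower semicontinuity in $W^{1,\infty}$ of the quasiconvex integral together with $Q\wt{\Phi}\leq\wt{\Phi}$; both are standard and equivalent here.
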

\begin{proof}
	Suppose $f_\infty(\bbF) = 0$. Then the lower bound $\fint_{\bbS^{d-1}} \Phi((|\bbF \bsomega|-1)_+) \, \rmd \sigma(\bsomega)$ in \eqref{eq:BoundsOnIntegrand} 
 must also be zero. However, the lower bound is zero so long as the integrand 
 $\Phi \big( ( |\bbF\bsomega| - 1)_+ \big) =0$ if and only if $|\bbF\bsomega| \leq 1$ for all $\bsomega\in \bbS^{d-1}$. It then follows that $|\bbF\bsomega| \leq 1$, so $|\bbF \bsomega|^2 \leq 1$, so $\Vint{(\bbI - \bbF^T \bbF) \bsomega, \bsomega } \geq 0$ for every $ \bsomega \in \bbS^{d-1}$. Thus $ \bbF^T \bbF \leq \bbI$. 
 
 Conversely,  suppose $\bbF\in \mathbb{R}^d$ such that $\bbF^T \bbF \leq \bbI$. We claim that the upper bound $Q\wt{\Phi}$ in \eqref{eq:BoundsOnIntegrand} is zero for such an $\bbF$ and therefore $f_\infty(\bbF)=0$.
 To show this, we use a convex integration scheme from \cite{ledret1995nonlinear}.
	It is clear from the definition and from the fact that $\Phi(0) = 0$ that
	\begin{equation*}
			\fint_{\bbS^{d-1}} \Phi( | |\bbF \bsomega|-1 |) \, \rmd \sigma(\bsomega) = 0 \text{ for } \bbF^T \bbF = \bbI\,.
	\end{equation*}
	Let  $0 \leq \lambda_1 \leq \ldots \leq \lambda_d \leq 1$ be the singular values of $\bbF$. Then there exist $\bbU'$,$\bbU'' \in \cO(d)$ such that
	\begin{equation*}
		\bbF = \bbU'' \sqrt{\bbF^T \bbF} = \bbU'' (\bbU')^T \bbD (\bbU')\,,
	\end{equation*}
 where $\bbD  =\diag(\lambda_1, \ldots, \lambda_d).$ 
It then follows that 	
 \begin{equation*}
	\wt{\Phi}(\bbF)= \fint_{\bbS^{d-1}} \Phi(  | |\bbF \bsomega|-1 |) \, \rmd \sigma(\bsomega)  =  \fint_{\bbS^{d-1}} \Phi ( | |\diag(\lambda_1, \ldots, \lambda_d) \bsomega|-1 |) \, \rmd \sigma(\bsomega)\, = \wt{\Phi}(\bbD),
	\end{equation*}
and as a consequence, by the definition of quasiconvexity, that 
 \[
 Q\wt{\Phi}(\bbF) = Q\wt{\Phi}(\bbD) \quad\quad\text{for any $\bbF\in \bbR^{d\times d}$}. 
 \]
	Thus it suffices to show $Q\wt{\Phi}(\bbF) = 0$ 
 for matrices $\bbF = \diag(\lambda_1, \ldots, \lambda_d)$ with $0 \leq \lambda_1 \leq \ldots \leq \lambda_d \leq 1$. To that end, let $B \subset \bbR^d$ be the unit ball. For $i \in \{1, \ldots, d\}$ define the continuous periodic function $\gamma_i : \bbR \to \bbR$ by
	\begin{equation*}
		\gamma_i(t) := 
		\begin{cases}
			(1-\lambda_i) t\,, &\quad \text{ if } 0 \leq t - \lfloor t \rfloor \leq \frac{1+\lambda_i}{2}\,; \\
			(-1-\lambda_i) (t-1)\,, &\quad \text{ if }  \frac{1+\lambda_i}{2} \leq t - \lfloor t \rfloor \leq 1\,.
		\end{cases}
	\end{equation*}
	Set $\bv(\bx) = \bbF\bx = (\lambda_i x_i)_i$, so that $\grad \bv(\bx) = \bbF$, and set
	\begin{equation*}
		\bv_k(\bx) := \bv(\bx) + \frac{1}{k} (\gamma_i(kx_i))_i = 
			\begin{bmatrix}
				\lambda_1 x_1 + \frac{1}{k} \gamma_1(kx_1) \\
				\lambda_2 x_2 + \frac{1}{k} \gamma_2(kx_2) \\
				\cdots\\
				\lambda_d x_d + \frac{1}{k} \gamma_d(kx_d)
			\end{bmatrix}\,.
	\end{equation*}
	Then $\bv_k \xrightharpoonup{*} \bv$ weak-$*$ in $W^{1,\infty}(B;\bbR^d)$ as $k \to \infty$. On the other hand,
	\begin{equation*}
		\grad \bv_k(\bx) = \diag(\veps_{k,1}(x_1), \ldots, \veps_{k,d}(x_d))\,,
	\end{equation*}
	where for $i \in \{1, \ldots, d\}$
	\begin{equation*}
		\veps_{k,i}(x_i) := 
		\begin{cases}
			1\,, &\quad \text{ if } 0 \leq kx_i - \lfloor k x_i \rfloor \leq \frac{1+\lambda_i}{2}\,; \\
			-1\,, &\quad \text{ if }  \frac{1+\lambda_i}{2} \leq k x_i - \lfloor k x_i \rfloor \leq 1\,.
		\end{cases}
	\end{equation*}
	Thus 
	\begin{equation*}
		(\grad \bv_k)^T (\grad \bv_k) = \bbI \text{ almost everywhere in } B\,,
	\end{equation*}
	and so
	\begin{equation*}
		\wt{\Phi}(\nabla \bv_k (\bx)) = \fint_{\bbS^{d-1}} \Phi(| |\grad \bv_k(\bx) \bsomega|-1 |) \, \rmd \sigma(\bsomega) = 0 \text{ for all } k \text{ and for almost every } \bx \in B\,.
	\end{equation*}
	Therefore, by the $W^{1,\infty}$ weak-$*$ lower semicontinuity of quasiconvex functions, using the fact that $Q\wt{\Phi}(\bbF)\leq \wt{\Phi}(\bbF)$, 
	\begin{equation*}
		\begin{split}
			Q\wt{\Phi}(\bbF)
			= \frac{1}{|B|} \int_{B} Q\wt{\Phi}(\nabla \bv(\bx))\, \rmd \bx&\leq \liminf_{n\to \infty}\frac{1}{|B|} \int_{B} Q\wt{\Phi}(\nabla \bv_k(\bx))\, \rmd \bx\\
   &\leq \liminf_{n\to \infty}\frac{1}{|B|} \int_{B} \wt{\Phi}(\nabla \bv_k(\bx))\, \rmd \bx = 0\,.
		\end{split}
	\end{equation*}
 That concludes the proof. 
\end{proof}

\subsection{Coercivity of the limiting density}

\begin{theorem}
    Suppose that $\Phi$ satisfies \eqref{eq:PDEnergy:ConditionsOnG}. Then there exists a constant $C(\Phi,p,d) > 0$ such that
    \begin{equation}\label{eq:LowerBoundOnGammaLowerBound}
        \fint_{\bbS^{d-1}} \Phi((|\bbF \bsomega|-1)_+) \, \rmd \sigma(\bsomega) \geq C (|\bbF|^p-1)
    \end{equation}
    for all $\bbF \in \bbR^{d \times d}$.
\end{theorem}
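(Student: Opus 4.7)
The plan is to combine an elementary pointwise inequality for $a \mapsto (a-1)_+^p$ with the $p$-growth lower bound on $\Phi$ to obtain a coercive estimate for the spherical average, and then to recast the result in the stated form by a case analysis on the size of $|\bbF|$.

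First I would establish the pointwise scalar bound $(a-1)_+^p \geq 2^{-p} a^p - C_p$ for every $a \geq 0$, with $C_p > 0$ depending only on $p$. For $a \geq 2$ the inequality $a - 1 \geq a/2$ immediately gives $(a-1)^p \geq 2^{-p} a^p$; for $a \in [0,2]$ both sides are bounded and the claim reduces to adjusting the constant $C_p$.

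Next I would apply this pointwise at $a = |\bbF\bsomega|$ and integrate over $\bsomega \in \bbS^{d-1}$. Since $\bbF \mapsto \bigl( \fint_{\bbS^{d-1}} |\bbF\bsomega|^p \,\rmd\sigma(\bsomega) \bigr)^{1/p}$ defines a norm on $\bbR^{d\times d}$ (it vanishes only when $\bbF = 0$), it is equivalent to $|\bbF|$, so $\fint_{\bbS^{d-1}} |\bbF\bsomega|^p \,\rmd\sigma \geq c_{d,p} |\bbF|^p$ for some $c_{d,p} > 0$. This yields
\[
\fint_{\bbS^{d-1}} (|\bbF\bsomega|-1)_+^p \,\rmd\sigma(\bsomega) \geq 2^{-p} c_{d,p} |\bbF|^p - C_p.
\]
Invoking the $p$-growth bound $\Phi(a) \geq C_0(a^p-1)$ pointwise at $a = (|\bbF\bsomega|-1)_+$ and integrating then produces
\[
\fint_{\bbS^{d-1}} \Phi((|\bbF\bsomega|-1)_+) \,\rmd\sigma(\bsomega) \geq A|\bbF|^p - B
\]
for constants $A, B > 0$ that depend only on $\Phi$, $p$, and $d$.

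Finally I would turn this coercive estimate into one of the form $C(|\bbF|^p - 1)$ by case analysis. For $|\bbF| \leq 1$ the right-hand side of the claim is nonpositive while the integral on the left is nonnegative, so the inequality holds trivially. For $|\bbF|^p \geq 2B/A$ one has $A|\bbF|^p - B \geq (A/2)|\bbF|^p \geq (A/2)(|\bbF|^p - 1)$, yielding the bound with $C = A/2$. For the intermediate range $1 < |\bbF|^p < 2B/A$, where both sides are bounded, I would use a continuity and compactness argument: whenever $|\bbF| > 1$ the right singular vector $\bsomega_0$ of $\bbF$ achieving the operator norm satisfies $|\bbF\bsomega_0| > 1$, so by continuity $\Phi((|\bbF\bsomega|-1)_+)$ is strictly positive on a neighborhood of $\bsomega_0$ of positive measure, giving a strictly positive left-hand side. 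The main obstacle will be making this compactness step uniform as $|\bbF| \to 1^+$, where both sides of the desired inequality vanish and one must control the ratio $\fint \Phi((|\bbF\bsomega|-1)_+) \,\rmd\sigma / (|\bbF|^p - 1)$; once handled, taking $C$ to be the minimum of $A/2$ and the infimum produced by the compactness argument completes the proof.
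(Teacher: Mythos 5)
Your derivation of the coercive lower bound is correct and, in substance, the same as the paper's, reached by a slightly different route. The paper first reduces to diagonal matrices $\bbD=\diag(\lambda_1,\dots,\lambda_d)$ via the singular value decomposition and the rotation invariance of $\rmd\sigma$, then restricts the integral to the cap $J=\{\omega_d>1/2\}$ where $|\bbD\bsomega|\ge\lambda_d|\omega_d|\ge 1$, and finishes with the same elementary inequality $(t-1)^p\ge 2^{-p}t^p-1$ for $t\ge 1$ that you use. You bypass the diagonalization by invoking the pointwise bound $(a-1)_+^p\ge 2^{-p}a^p-1$ for all $a\ge 0$ together with the equivalence of $|\bbF|$ with $\bigl(\fint_{\bbS^{d-1}}|\bbF\bsomega|^p\,\rmd\sigma(\bsomega)\bigr)^{1/p}$ --- a fact the paper itself uses in Lemma~\ref{lma:BoundFromBelow} --- which is arguably cleaner. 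Both arguments land on $\fint_{\bbS^{d-1}}\Phi((|\bbF\bsomega|-1)_+)\,\rmd\sigma\ge A|\bbF|^p-B$ with $A,B>0$ depending only on $\Phi$, $p$, $d$.

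Your unease about the final step is well-founded, but the obstacle lies in the statement, not in your argument: the single-constant form $\ge C(|\bbF|^p-1)$ is false for $d\ge 2$, so no compactness argument can close the intermediate range. Take $\bbF_\lambda=\diag(\lambda,0,\dots,0)$ with $\lambda\downarrow 1$. The integrand vanishes off the caps $\{|\omega_1|>1/\lambda\}$, whose measure is $O((\lambda-1)^{(d-1)/2})$, and on those caps $(\lambda|\omega_1|-1)_+\le\lambda-1$, while convexity of $\Phi$ with $\Phi(0)=0$ gives $\Phi(t)\le\Phi(1)\,t$ on $[0,1]$; hence the left-hand side is $O\bigl((\lambda-1)^{1+(d-1)/2}\bigr)$, whereas $C(|\bbF_\lambda|^p-1)$ is of order $\lambda-1$ for either the operator or the Frobenius norm, so the ratio tends to $0$. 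The paper's proof carries the identical issue, concealed in the phrase ``clearly, we may assume $\lambda_d\ge 2$.'' What both proofs actually establish --- and all that is needed for Corollary~\ref{cor:CoercivityOfIntegrand} --- is the two-constant coercivity bound $\ge A|\bbF|^p-B$. You should stop at that bound and record it as the conclusion rather than trying to force the single-constant form.
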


\begin{proof}
    Then by an argument similar to that in the proof of Theorem \ref{thm:ZeroSetOfFxnal} it suffices to show
    \eqref{eq:LowerBoundOnGammaLowerBound} with $\bbF$ of the form $\bbD := \diag(\lambda_1,\lambda_2,\ldots,\lambda_d)$, where $0 \leq \lambda_1 \leq \ldots \leq \lambda_d$.
    Using the equivalence of norms $|\bbF| \leq C(d) |\lambda_d|$ as well as the lower bound on $\Phi$, \eqref{eq:LowerBoundOnGammaLowerBound} will be established if we show that
    \begin{equation*}
        \fint_{\bbS^{d-1}} (|\bbD \bsomega|-1)_+^p \, \rmd \sigma(\bsomega) \geq C (|\lambda_d|^p-1)\,.
    \end{equation*}
    Clearly, we may assume that $\lambda_d \geq 2$.
    Let $J := \{ \bsomega \in \bbS^{d-1} \, : \, \omega_d > 1/2 \}$. Then $|\bbD \bsomega| \geq \lambda_d |\omega_d| \geq 1$ on $J$, and since $t \mapsto (|t| - 1)_+^p$ is monotone 
    \begin{equation*}
    \begin{split}
        \fint_{\bbS^{d-1}} (|\bbD \bsomega|-1)_+^p \, \rmd \sigma(\bsomega) \geq \fint_{\bbS^{d-1}} (|\lambda_d \omega_d|-1)_+^p \, \rmd \sigma(\bsomega) 
        &\geq \frac{1}{|\bbS^{d-1}|} \int_{J} (|\lambda_d \omega_d|-1)^p \, \rmd \sigma(\bsomega) \\
        &\geq C(d,p) \left( \int_{J} (|\lambda_d \omega_d|^p \, \rmd \sigma(\bsomega) - 1 \right) \\
        &= C(d,p) (|\lambda_d|^p -1 )\,.
    \end{split}
    \end{equation*}
\end{proof}

\begin{corollary}\label{cor:CoercivityOfIntegrand}
    Let $f_{\infty} :\bbR^{d \times d} \to \bbR$ be as in Theorem \ref{thm:LocalGammaConvergence}.
    Then there exists $C = C(\Phi,d,p) >0 $ such that
    \begin{equation*}
        C (|\bbF|^{p} - 1) \leq f_{\infty}(\bbF) \qquad \forall \bbF \in \bbR^{d \times d}\,. 
    \end{equation*}
\end{corollary}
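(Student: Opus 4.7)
The plan is to combine the two estimates already in hand. From the lower bound in the proposition of Section~\ref{sec:LocProp} (specifically the inequality \eqref{eq:BoundsOnIntegrand}), we have
\begin{equation*}
   \fint_{\bbS^{d-1}} \Phi((|\bbF \bsomega|-1)_+) \, \rmd \sigma(\bsomega) \leq f_\infty(\bbF) \qquad \text{for every } \bbF \in \bbR^{d \times d}.
\end{equation*}
The preceding theorem then supplies the pointwise coercivity bound
\begin{equation*}
   \fint_{\bbS^{d-1}} \Phi((|\bbF \bsomega|-1)_+) \, \rmd \sigma(\bsomega) \geq C (|\bbF|^p-1).
\end{equation*}
Chaining these two inequalities yields the corollary with the same constant $C = C(\Phi,d,p)$, so the proof is a one-line deduction.

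There is no real obstacle: everything nontrivial was done in the coercivity theorem, where one reduced to a diagonal matrix via orthogonal invariance of $\bsomega \mapsto |\bbF\bsomega|$ on $\bbS^{d-1}$, then used that on the cap $J = \{\omega_d > 1/2\}$ the quantity $|\bbD\bsomega|$ is bounded below by $\lambda_d/2$, which provides the $|\bbF|^p$ growth after applying the growth hypothesis \eqref{eq:PDEnergy:ConditionsOnG} on $\Phi$. The corollary simply transfers this lower bound across the $\Gamma$-limit via the already-established inequality $F_{\mathrm{LB}} \leq \overline{F}_\infty$.
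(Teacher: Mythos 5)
Your proposal is correct and is exactly the paper's (implicit) argument: the corollary follows by chaining the lower bound $\fint_{\bbS^{d-1}} \Phi((|\bbF \bsomega|-1)_+) \, \rmd \sigma(\bsomega) \leq f_\infty(\bbF)$ from \eqref{eq:BoundsOnIntegrand} with the coercivity estimate \eqref{eq:LowerBoundOnGammaLowerBound} of the preceding theorem. Nothing further is needed.
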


\subsection{Summary of results for generalized strains}
Analogues of the results from this section hold for the quasiconvex functional $\overline{F}_\infty^m$ given via integral representation in Remark \ref{gamma-conv-remark}. The following theorem can be obtained using similar techniques:

\begin{theorem}
	For $m \geq 1$ and $A \in \cA_0(\Omega)$, define the functionals $\overline{F}_n^m(\cdot,A) : L^{mp}(\Omega;\bbR^d) \to \overline{\bbR}$ to be extensions of $F_n^m(\cdot,A)$, defined as
    \begin{equation*}
        F_{n}^{m}(\bv,A):=\int_A \int_{A} \rho_n(\bx-\by) \Phi(|s_m[\bv](\bx,\by)|) \, \rmd \by \, \rmd \bx\,,
    \end{equation*}
    to $L^{mp}(\Omega;\bbR^d)$; that is,
	\begin{equation*}
		\overline{F}_n^m(\bv,A) := \begin{cases}
			F_n^m(\bv,A)\,, & \bv \in \frak{W}^{\rho_n,mp}(\Omega;\bbR^d) \\
			+\infty\,, & \text{ otherwise. }
		\end{cases}
	\end{equation*}
	Then there exists a quasiconvex Carath\'eodory function $f_\infty^m : \bbR^{d \times d} \to \bbR$ satisfying the growth condition
	\begin{equation*}
		C(|\bbF|^{mp} - 1) \leq f_\infty(\bbF) \leq C (|\bbF|^{mp} + 1)
	\end{equation*}
	such that $\overline{F}_n^m(\cdot,A)$ $\Gamma$-converges in the strong topology on $L^{mp}(\Omega;\bbR^d)$ to $\overline{F}_\infty^m(\cdot,A)$, where the limiting functional $\overline{F}_{\infty}^m(\cdot,A) : L^{mp}(\Omega;\bbR^d) \to \overline{\bbR}$ is defined as
	\begin{equation*}
		\overline{F}_{\infty}^m(\bv,A) = \begin{cases}
			\intdm{A}{f_\infty^m(\grad \bv(\bx))}{\bx}\,, & \bv \in W^{1,mp}(\Omega;\bbR^d)\\
			+\infty\,, & \text{ otherwise. }
		\end{cases}
	\end{equation*}
	Moreover, the effective strain energy density $f_\infty^m$ is frame-indifferent, and $f_\infty^m(\bbF) = 0$ if and only if $\bbF^T \bbF \leq \bbI$ in the sense of quadratic forms.
\end{theorem}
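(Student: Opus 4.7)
The plan is to follow the same program developed in Sections \ref{sec:Loc} and \ref{sec:LocProp} for the case $m=1$, simply replacing the strain $s[\bv]$ with $s_m[\bv] = \frac{1}{m}(|\cD \bv|^m - 1)$ throughout. The effect of the $m$-th power is that the functional $F_n^m$ now has $mp$-growth in $\cD \bv$ rather than $p$-growth; consequently, the natural ambient space is $L^{mp}(\Omega;\bbR^d)$, the coercivity seminorm becomes $[\cdot]_{\frak{W}^{\rho_n,mp}(\Omega)}$, and the energy density should satisfy $C(|\bbF|^{mp}-1) \leq f_\infty^m(\bbF) \leq C(|\bbF|^{mp}+1)$. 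We will work on a subsequence along which $\overline{F}_n^m$ $\Gamma$-converges strongly in $L^{mp}(\Omega;\bbR^d)$, which exists by compactness of $\Gamma$-convergence, and then identify the limit by invoking Theorem \ref{thm:ACCriterion}.

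The structural lemmas carry over essentially verbatim. The lower bound $F'(\bv,A) \geq C(\|\grad \bv\|_{L^{mp}(A)}^{mp} - |A|)$ follows from the growth condition \eqref{eq:PDEnergy:ConditionsOnG} applied to $s_m$, the inequality $||\bbF|^m - 1|^p \geq c|\bbF|^{mp} - C$, and the BBM characterization of $W^{1,mp}$. The upper bound $F''(\bv,A) \leq C(\|\grad \bv\|_{L^{mp}(A)}^{mp} + |A|)$ follows from $\Phi(|s_m[\bv]|) \leq C(1 + |\cD\bv|^{mp})$ and the BBM limit formula $\lim_n \iint \rho_n(\by-\bx)|\cD \bv|^{mp} \, \rmd\by\, \rmd\bx = \int \fint_{\bbS^{d-1}} |\grad \bv \bsomega|^{mp}\,\rmd\sigma\,\rmd\bx$. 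The subadditivity Lemma \ref{lma:Subadditivity}, inner regularity Lemma \ref{lma:InnerMeasure}, superadditivity Lemma \ref{superadditivity}, and locality Lemma \ref{locality} all adapt by using Jensen's inequality with exponent $mp$ in place of $p$ when splicing sequences via cutoffs. With these in hand, Theorem \ref{thm:ACCriterion} delivers a quasiconvex Carath\'eodory density $f_\infty^m$ with the claimed growth, and the same translation argument as in the proof of Theorem \ref{thm:LocalGammaConvergence} shows $f_\infty^m$ is independent of $\bx$.

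Frame-indifference is immediate: since $F_n^m$ depends on $\bv$ only through $|\cD \bv|$, we have $F_n^m(\bbU \bv, A) = F_n^m(\bv, A)$ for any constant $\bbU \in \cO(d)$, and the argument in Theorem \ref{thm:InvarianceOfIntegrand} then gives $f_\infty^m(\bbU \bbF) = f_\infty^m(\bbF)$. For the zero set, the generalized bounds \eqref{eq:BoundsOnGammaLimit:m} give
\begin{equation*}
\fint_{\bbS^{d-1}} \Phi\bigl(m^{-1}(|\bbF\bsomega|^m - 1)_+\bigr)\, \rmd\sigma \leq f_\infty^m(\bbF) \leq Q\wt{\Phi}_m(\bbF),\quad \wt{\Phi}_m(\bbF) := \fint_{\bbS^{d-1}} \Phi\bigl(m^{-1}||\bbF\bsomega|^m - 1|\bigr)\,\rmd\sigma.
\end{equation*}
If $f_\infty^m(\bbF) = 0$, then $|\bbF\bsomega|^m \leq 1$ for all $\bsomega$, equivalently $\bbF^T \bbF \leq \bbI$. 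Conversely, if $\bbF^T\bbF \leq \bbI$, diagonalize via the singular value decomposition using frame-indifference to reduce to $\bbF = \diag(\lambda_1,\ldots,\lambda_d)$ with $\lambda_i \in [0,1]$, and construct the same componentwise sawtooth sequence $\bv_k \xrightharpoonup{\ast} \bbF \bx$ in $W^{1,\infty}$ as in Theorem \ref{thm:ZeroSetOfFxnal}; since $(\grad \bv_k)^T (\grad \bv_k) = \bbI$ a.e., we have $\wt{\Phi}_m(\grad \bv_k) = 0$ a.e., and $W^{1,\infty}$ weak-$\ast$ lower semicontinuity of $Q\wt{\Phi}_m$ gives $Q\wt{\Phi}_m(\bbF) = 0$. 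Coercivity is recovered exactly as in Corollary \ref{cor:CoercivityOfIntegrand} after noting that on the polar cap $J = \{\omega_d > 1/2\}$ one has $(|\lambda_d \omega_d|^m - 1)_+^p \geq c(\lambda_d^{mp} - 1)$ for $\lambda_d \geq 2$.

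The main obstacle I anticipate is verifying the upper bound representation $\lim_n \iint \rho_n(\by-\bx)\Phi(|s_m[\bv]|)\, \rmd\by\,\rmd\bx = \int \wt{\Phi}_m(\grad \bv)\,\rmd\bx$ for $\bv \in C^2(\overline{\Omega};\bbR^d)$, which underlies the subadditivity splicing and the upper bound in \eqref{eq:BoundsOnGammaLimit:m}. One must show that the pointwise convergence $|\cD \bv(\bx,\by)|^m \to |\grad \bv(\bx) \bsomega|^m$ (for $\bsomega = (\by-\bx)/|\by-\bx|$) can be passed through the nonlinear $\Phi$ under the localizing kernels $\rho_n$, using the uniform Lipschitz bound on $\bv$ to dominate $\Phi(m^{-1}||\cD \bv|^m - 1|)$ by an integrable majorant independent of $n$, and then the Lipschitz continuity of $\Phi$ on bounded sets combined with the BBM convolution trick. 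A secondary subtlety is that when splicing sequences in Lemma \ref{lma:Subadditivity}, the bound $|s_m[\bszeta_{k,n}]| \leq C(1 + |\cD \bszeta_{k,n}|^m)$ must be combined with the convexity bound $|\cD \bszeta_{k,n}|^{mp} \leq 3^{mp-1}(|\cD\bv_n|^{mp} + |\cD \bw_n|^{mp} + (2M/q)^{mp}|\bv_n-\bw_n|^{mp})$; this is routine but requires that the minimizing sequences have uniformly bounded $mp$-seminorms, which is furnished by the adapted \eqref{eq:CoercivityOfEnergy}.
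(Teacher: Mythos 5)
Your proposal is correct and follows exactly the route the paper intends: the paper itself proves the $m=1$ case in full (Sections \ref{sec:Loc}--\ref{sec:LocProp}) and states the general-$m$ theorem as obtainable "using similar techniques," which is precisely the systematic $p \mapsto mp$ substitution you carry out, including the adapted coercivity \eqref{eq:CoercivityOfEnergy}, the BBM limits, the splicing in the subadditivity lemma, and the zero-set argument via \eqref{eq:BoundsOnGammaLimit:m} and the sawtooth construction. The technical points you flag (dominating $\Phi(|s_m[\bv]|)$ for Lipschitz $\bv$, and the Lipschitz-on-bounded-sets estimate for $\Phi$ composed with $s_m$ needed for the density reduction to $C^2$) are handled exactly as in the $m=1$ proofs, so no genuine gap remains.
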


\begin{remark}
    In general, $f_\infty^m$ cannot be obtained in a straightforward way from the upper and lower bounds in \eqref{eq:BoundsOnGammaLimit:m}, as the two do not coincide. Take $d = 2$, $\Phi(t) = t^2$, $m = 2$, and let $\bbF$ be any matrix in $\bbR^{2 \times 2}$.
    Then 
    $$
    \wt{\Phi}(\bbF) = \frac{1}{4} \fint_{\bbS^{1}} (|\bbF \bsomega|^2 - 1)^2 \, \rmd \sigma(\bsomega) = \frac{1}{32} \left( |\bbF^T \bbF - \bbI|^2 + \frac{1}{2} (|\bbF|^2 - 2)^2 \right)\,.
    $$
    The quasiconvexification of this function is computed explicitly in \cite[Theorem 6.29]{Dacorogna}. From that formula it is clear that $Q \wt{\Phi}(\bbF) = \wt{\Phi}(\bbF)$ for all $\bbF$ with norm large enough, i.e. $|\bbF| > 8$. However, $\frac{1}{4} \fint_{\bbS^{1}} ( |\bbF \bsomega|^2 - 1)_+^2 \, \rmd \sigma(\bsomega) \neq \wt{\Phi}(\bbF)$ for any $\bbF$ of the form $\diag(\lambda,1/\lambda)$ for any $\lambda$ large.

    We note that in the special case $d = 1$, it is straightforward to check that the upper and lower bounds \eqref{eq:BoundsOnGammaLimit:m} on $f_\infty$ coincide, which gives the precise $\Gamma$-limit \begin{equation*}
		\overline{F}_\infty^m(v,A) = \begin{cases}
			\intdm{A}{ \Phi(m^{-1} (|v'(x)|^m - 1)_+)}{x}\,, &v \in W^{1,mp}(\Omega) \\
			+ \infty\,, &\text{ otherwise. }
		\end{cases}
    \end{equation*}
\end{remark}

\appendix
\section{Characterizations of distance-preserving maps}
\begin{theorem}\label{lma:MeasFxnsAreRigid}
	Let $\Omega \subset \bbR^d$ be a bounded domain. Suppose $
	\bv : \Omega \to \bbR^d$ is measurable, and suppose $\bv$ satisfies
	\begin{equation}\label{eq:IsometryCondition}
		|\bv(\bx)-\bv(\by)| = |\bx-\by|
	\end{equation}
	for $\cL^{2d}$-almost every $(\bx,\by) \in \Omega \times \Omega$, where in general $\cL^N$ denotes $N$-dimensional Lebesgue measure. 
	Then there exists a constant matrix $\bbF \in \cO(d)$ and $\bfb \in \bbR^d$ such that $\bv(\bx) = \bbF \bx+ \bfb$ for almost every $\bx \in \Omega$.
\end{theorem}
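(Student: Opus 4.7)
The plan is to locate $d+1$ affinely independent ``good'' base points in $\Omega$ at which $\bv$ behaves like a genuine rigid motion, to extract from them a candidate affine isometry $T$, and then to show that $\bv$ agrees with $T$ almost everywhere by invoking the elementary fact that a point of $\bbR^d$ is uniquely determined by its distances to any $d+1$ affinely independent points.

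First I would apply Fubini to the exceptional null set $N \subset \Omega \times \Omega$ on which \eqref{eq:IsometryCondition} fails, obtaining a set $\Omega_\ast \subset \Omega$ of full $\cL^d$-measure such that for every $\bx \in \Omega_\ast$ the identity $|\bv(\bx)-\bv(\by)|=|\bx-\by|$ holds for $\cL^d$-a.e.\ $\by \in \Omega$. I then choose base points $\bx_0,\bx_1,\ldots,\bx_d$ iteratively: having selected $\bx_0,\ldots,\bx_{k-1}$, pick $\bx_k$ from the intersection of $\Omega_\ast$ with the full-measure sets $E_i := \{\by \in \Omega : |\bv(\by)-\bv(\bx_i)| = |\by-\bx_i|\}$ for $i<k$, and additionally require that $\bx_k - \bx_0$ lie outside $\operatorname{span}\{\bx_1-\bx_0, \ldots, \bx_{k-1}-\bx_0\}$. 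The latter constraint excludes only a lower-dimensional (hence $\cL^d$-null) set, so the selection succeeds; it delivers $|\bv(\bx_i)-\bv(\bx_j)| = |\bx_i-\bx_j|$ for all $i,j$ together with affine independence of $\{\bx_i\}_{i=0}^d$.

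Second, setting $\bu_i := \bx_i - \bx_0$ and $\bw_i := \bv(\bx_i) - \bv(\bx_0)$, polarization together with the pairwise distance preservation yields $\langle \bu_i, \bu_j \rangle = \langle \bw_i, \bw_j \rangle$ for all $i,j$, so the two Gram matrices coincide. A polar-decomposition argument then produces $\bbF \in \cO(d)$ with $\bbF \bu_i = \bw_i$ for each $i$; defining $\bfb := \bv(\bx_0) - \bbF \bx_0$ and $T(\bx) := \bbF\bx + \bfb$ gives an affine isometry satisfying $T(\bx_i) = \bv(\bx_i)$ for $i=0,\ldots,d$. Finally, on the full-measure intersection $E := \bigcap_{i=0}^d E_i$, every $\by$ satisfies
\begin{equation*}
|\bv(\by)-\bv(\bx_i)| = |\by-\bx_i| = |T(\by)-T(\bx_i)| = |T(\by)-\bv(\bx_i)|
\end{equation*}
for each $i$; squaring and subtracting the $i=0$ equation from the others shows that $\bv(\by)-T(\by)$ is orthogonal to every $\bv(\bx_i)-\bv(\bx_0) = \bbF(\bx_i-\bx_0)$, and these vectors span $\bbR^d$ since $\bbF$ is invertible and the $\bx_i$ are affinely independent. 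Hence $\bv(\by) = T(\by)$ for $\cL^d$-a.e.\ $\by \in \Omega$.

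The only delicate point is the bookkeeping in the selection step, namely ensuring that a single $(d+1)$-tuple lies simultaneously in all the correct full-measure sets while also meeting the affine independence condition. This is routine once one observes that each added constraint excludes only a null set, so the usual Fubini/generic-position argument closes the gap; the remainder of the proof is simply the distance rigidity of finite Euclidean point configurations.
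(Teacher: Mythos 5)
Your proposal is correct, and it takes a genuinely different route from the paper's. The paper proceeds by regularity upgrade: it first proves the result for continuous $\bv$ (using density to promote the a.e.\ identity to an everywhere identity, normalizing so $\bv(\boldsymbol{0})=\boldsymbol{0}$, deriving $\langle \bv(\bx),\bv(\by)\rangle = \langle \bx,\by\rangle$, and building $\bbF$ from the values of $\bv$ at the scaled coordinate vectors $R\be_j$), and then handles the measurable case by Lusin's theorem, the observation that \eqref{eq:IsometryCondition} forces $\bv$ to be $1$-Lipschitz on each Lusin set, Kirszbraun extension, and uniform convergence to a Lipschitz representative. Your argument stays entirely in the measurable category: Fubini produces a full-measure set of good base points, a generic-position selection yields $d+1$ affinely independent points with pairwise distances preserved, the Gram-matrix identity produces the orthogonal $\bbF$, and the trilateration step (a point of $\bbR^d$ is determined by its distances to $d+1$ affinely independent points) identifies $\bv$ with the affine isometry $T$ on a full-measure set. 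All the steps check out: each $E_i$ has full measure because $\bx_i$ is chosen in the Fubini-good set, the affine-independence constraint removes only a Lebesgue-null affine subspace at each stage, and subtracting the $i=0$ distance equation from the others does give $\langle \bv(\by)-T(\by), \bbF(\bx_i-\bx_0)\rangle = 0$ for a spanning family. What your approach buys is that it avoids Lusin/Kirszbraun/Arzel\`a--Ascoli entirely and never needs a continuous representative; what the paper's approach buys is an explicit continuous (indeed Lipschitz) representative of $\bv$, and its continuous-case computation is reused implicitly elsewhere. Either proof is acceptable.
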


\begin{proof}
We prove the result first under the additional assumption that $\bv : \Omega \to \bbR^d$ is continuous.
	Then the functions $f_1(\bx,\by) := |\bv(\bx)-\bv(\by)|$ and $f_2(\bx,\by):=|\bx-\by|$ are continuous on $\Omega \times \Omega$. Let $X \subset \Omega \times \Omega$ be the set where \eqref{eq:IsometryCondition} holds. Then $(\Omega \times \Omega) \backslash X$ is dense in $\Omega\times \Omega$. So by density and continuity we have 
	\[
		|\bv(\bx)-\bv(\by)| = |\bx-\by| \quad \forall\, \bx\,, \by \in \Omega \times \Omega\,.
	\]
	Since the relation \eqref{eq:IsometryCondition} is translation- and shift-invariant, we can assume without loss of generality that $B({\bf 0},R) \subset \Omega$ for some $R>0$ and $\bv({\bf 0}) = {\bf 0}$. We will show that $\bv(\bx) = \bbF \bx$ for some constant matrix $\bbF \in \cO(d)$, obtaining the result for continuous functions.
	
	First, by \eqref{eq:IsometryCondition} $|\bv(\bx)|^2 = |\bv(\bx)-\bv({\bf 0})|^2 = |\bx-{\bf 0}|^2 = |\bx|^2$.
	The identity
	\begin{equation}\label{eq:IsometryCondition:InnerProd}
		\Vint{\bv(\bx),\bv(\by)} = \Vint{\bx,\by}
	\end{equation}
	for every $(\bx,\by) \in \Omega \times \Omega$ follows, since
	\begin{equation*}
		\begin{split}
			|\bx|^2 - 2 \Vint{\bx,\by} + |\by|^2 =|\bx-\by|^2 
			= |\bv(\bx)-\bv(\by)|^2
			&= |\bv(\bx)|^2 - 2 \Vint{\bv(\bx),\bv(\by)} + |\bv(\by)|^2 \\
			&= |\bx|^2 - 2\Vint{\bv(\bx),\bv(\by)} + |\by|^2\,.
		\end{split}
	\end{equation*}
	Define the $d \times d$ matrix
	\begin{equation*}
		\bbF = (a^{jk}) = \frac{1}{R} (u_j( R \be_k))\,,
	\end{equation*}
	where $\be_k$ is the vector in $\bbR^d$ with $k$th coordinate $1$ and all other coordinates $0$.
	Using this definition and using \eqref{eq:IsometryCondition:InnerProd} with $R \be_j$ in place of $\by$,
	\begin{equation*}
		[\bbF^T \bv(\bx)]_j = \sum_{k=1}^{d} a^{kj} u_k(\bx) = \sum_{k=1}^{d}  \frac{1}{R} u_k(R \be_j) u_k(\bx) = \frac{1}{R} \Vint{\bv(\bx),\bv( R \be_j)} = \frac{1}{R}  \Vint{\bx,R \be_j} = x_j\,.
	\end{equation*}
	This is true for any $j \in \{ 1, \ldots, d\}$, and so
	\begin{equation*}
		\bbF^T \bv(\bx) = \bx\,.
	\end{equation*}
	Again using the definition of $\bbF$ and using \eqref{eq:IsometryCondition:InnerProd} with $(R \be_j, R \be_k)$ in place of $(\bx,\by)$,
	\begin{equation*}
		[\bbF^T \bbF]_{jk} = \sum_{\ell = 1}^n a^{\ell j} a_{\ell k} = \frac{1}{R^2} \sum_{\ell = 1}^n u_{\ell}(R \be_j) u_{\ell} (R \be_k) = \frac{1}{R^2}\Vint{\bv(R \be_j),\bv(R \be_k)} = \frac{1}{R^2}\Vint{R \be_j, R\be_k} = \delta_{jk}\,.
	\end{equation*}
	Therefore $\bbF^T \bbF = \bbI$, and we conclude that $\bv(\bx) = \bbF \bx$ with $\bbF \in \cO(d)$.
	
	Now suppose $\bv$ is measurable. By Lusin's theorem for every $n \in \bbN$ there exists a closed set $K_n \subset \Omega$ with $\cL^d(\Omega \setminus K_n) < \frac{1}{n}$ such that $\bv$ is continuous on $K_n$. By \eqref{eq:IsometryCondition} $\bv$ is also Lipschitz on $K_n$, with Lipschitz constant $1$. By Kirszbraun's theorem there exists a function $\bv_n : \bbR^d \to \bbR^d$ that is $1$-Lipschitz and coincides with $\bv$ on $K_n$. Therefore by Rellich's theorem there exists a subsequence (not relabeled) $\{\bv_n\}$ that converges uniformly on $\overline{\Omega}$ to a continuous and $1$-Lipschitz function $\wt{\bv}$. By definition of the $\bv_n$ it follows that $\bv = \wt{\bv}$ almost everywhere on $\Omega$. Therefore $\bv$ has a Lipschitz (hence continuous) representative, and the first part of the proof applies.
\end{proof}

This rigidity result can be strengthened in the spirit of \cite{reshetnyak1967liouville}, as we demonstrate in the next theorem.

\begin{theorem}\label{thm:ReshetnyakVersion1}
    Let $m \geq 1$, let $\rho \in L^1(\bbR^d)$ be a nonnegative radial kernel satisfying $B(\boldsymbol{0},r) \subset \supp \rho \subset B(\boldsymbol{0},R)$ for given $0 < r < R$, and let $\Phi$ be a nondecreasing convex function satisfying \eqref{eq:PDEnergy:ConditionsOnG}. Suppose a sequence of vector fields $\{ \bv_n \}_n \subset \frak{W}^{\rho,p}(\Omega;\bbR^d)$ satisfies
	\begin{equation*}
		\lim\limits_{n \to \infty} \iintdm{\Omega}{\Omega}{\rho(\bx-\by) \Phi \left( |s_m[\bv_n](\by,\bx)| \right) }{\by}{\bx} = 0\,.
	\end{equation*}
	Suppose additionally that there exists a function $\bv \in L^{1}(\Omega;\bbR^d)$ such that $\bv_n \to \bv$ in $L^{1}(\Omega;\bbR^d)$. Then there exists a constant matrix $\bbF \in \cO(d)$ and a vector $\bfb \in \bbR^d$ such that $\bv(\bx) = \bbF \bx + \bfb$ for almost every $\bx \in \Omega$.
\end{theorem}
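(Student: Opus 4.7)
The plan is to use the hypotheses to show that the limit $\bv$ satisfies the pointwise isometry relation $|\bv(\by)-\bv(\bx)| = |\by-\bx|$ for almost every pair $(\bx,\by) \in \Omega \times \Omega$ with $|\by-\bx| < r$, and then invoke Theorem \ref{lma:MeasFxnsAreRigid} on small balls together with a chaining argument to conclude that $\bv$ agrees a.e.\ with a single affine map whose gradient lies in $\cO(d)$.

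To produce the pointwise isometry, I would first extract subsequences (not relabeled) twice. Since $\bv_n \to \bv$ in $L^1(\Omega;\bbR^d)$, a subsequence converges a.e.\ on $\Omega$, and by Fubini the product pair $(\bv_n(\bx),\bv_n(\by))$ converges to $(\bv(\bx),\bv(\by))$ for a.e.\ $(\bx,\by) \in \Omega \times \Omega$; in particular $\cD\bv_n(\by,\bx) \to \cD\bv(\by,\bx)$ for a.e.\ $\bx \neq \by$, hence $s_m[\bv_n](\by,\bx) \to s_m[\bv](\by,\bx)$ a.e. The hypothesis gives the $L^1$-convergence $\rho(\bx-\by)\Phi(|s_m[\bv_n](\by,\bx)|) \to 0$, so a further subsequence converges to $0$ pointwise a.e. Taking the intersection of the two full-measure sets and using continuity of $\Phi$, I obtain $\rho(\bx-\by)\Phi(|s_m[\bv](\by,\bx)|) = 0$ a.e. Because $\rho > 0$ on $B(\boldsymbol{0},r)$ and $\Phi^{-1}(0) = \{0\}$ by \eqref{eq:PDEnergy:ConditionsOnG}, this forces $s_m[\bv](\by,\bx) = 0$, i.e.\ $|\bv(\by) - \bv(\bx)| = |\by - \bx|$, for a.e.\ $(\bx,\by) \in \Omega \times \Omega$ satisfying $|\by-\bx| < r$.

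Next I would localize. For any $\bx_0 \in \Omega$ with $B(\bx_0,r/2) \subset \Omega$, every pair of points in $B(\bx_0,r/2)$ lies within distance $r$, so the restriction $\bv|_{B(\bx_0,r/2)}$ is measurable and satisfies the hypothesis of Theorem \ref{lma:MeasFxnsAreRigid} on the bounded domain $B(\bx_0,r/2)$. That theorem then supplies a matrix $\bbF_{\bx_0} \in \cO(d)$ and a vector $\bfb_{\bx_0} \in \bbR^d$ with $\bv(\bx) = \bbF_{\bx_0}\bx + \bfb_{\bx_0}$ a.e.\ in $B(\bx_0,r/2)$. Two such local representations on overlapping balls must coincide, since their difference is affine and vanishes on a set of positive Lebesgue measure in the overlap, which spans $\bbR^d$. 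Since a bounded domain is (by convention) connected, I cover $\Omega$ by a chain of such balls and propagate the identification $(\bbF_{\bx_0},\bfb_{\bx_0}) = (\bbF,\bfb)$ along overlaps to conclude $\bv(\bx) = \bbF\bx + \bfb$ a.e.\ on $\Omega$ with $\bbF \in \cO(d)$.

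The main technical obstacle is the first step: extracting a single subsequence along which both $\bv_n \to \bv$ pointwise a.e.\ on $\Omega$ and $\rho(\bx-\by)\Phi(|s_m[\bv_n](\by,\bx)|) \to 0$ pointwise a.e.\ on $\Omega \times \Omega$, so that the identification of $s_m[\bv]$ with $0$ on $\{|\by-\bx|<r\}$ is valid. Once that is in hand, the local-to-global passage via Theorem \ref{lma:MeasFxnsAreRigid} and overlapping balls is straightforward; the role of the radial kernel $\rho$ being bounded below on a ball of positive radius is precisely to guarantee enough pairs $(\bx,\by)$ to apply the earlier rigidity lemma.
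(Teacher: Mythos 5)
Your proposal follows essentially the same route as the paper's proof: pass to a subsequence to identify $\rho(\bx-\by)\,\Phi\left(|s_m[\bv](\by,\bx)|\right)$ with zero almost everywhere (you do this via two a.e.-convergent subsequences, the paper via Fatou's lemma --- the two are equivalent here), then localize to small balls where Theorem \ref{lma:MeasFxnsAreRigid} applies, and finally chain the resulting local affine isometries over the connected domain $\Omega$. The one slip is in the covering step: balls of the \emph{fixed} radius $r/2$ contained in $\Omega$ need not cover $\Omega$ (near the boundary, or in a thin domain, such balls may not exist at all), so you should shrink the radius near $\p\Omega$, e.g.\ use $B(\bx_0, r_{\bx_0}/2)$ with $r_{\bx_0} = \min\{r, \dist(\bx_0,\p\Omega)\}$ exactly as the paper does; with that adjustment the argument is complete.
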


\begin{proof}
	Since $\bv_n \to \bv$ in $L^{1}(\Omega;\bbR^d)$ there exists a subsequence (not relabeled) $\{\bv_n\}_n$ that converges to $\bv$ $\cL^d$-almost everywhere in $\Omega$. Since $\Phi$ is continuous,
	\begin{equation*}
		\Phi \left( |s_m[\bv_n](\by,\bx)| \right)  \to \Phi \left( |s_m[\bv](\by,\bx)| \right)  \; \cL^{2d}\text{-a.e. in } \Omega \times \Omega\,,
	\end{equation*}
	and so by Fatou's lemma
	\begin{equation*}
		\iintdm{\Omega}{\Omega}{\rho(\bx-\by) \Phi \left( |s_m[\bv](\by,\bx)| \right) }{\by}{\bx} 
		\leq \liminf_{n \to \infty} \iintdm{\Omega}{\Omega}{\rho(\bx-\by) \Phi \left( |s_m[\bv_n](\by,\bx)| \right) }{\by}{\bx} = 0\,.
	\end{equation*}
	Therefore it must be that
        $
		\rho(\by-\bx)\Phi \left( |s_m[\bv](\by,\bx)| \right)  = 0 \text{ for a.e. } \by \in \supp \rho + \bx\,, \bx \in \Omega\,.
        $
	For any $\bx \in \Omega$, define $r_{\bx} = \min\{r,\dist(\bx,\p \Omega)\}$. Then 
	by assumption on $\rho$, 
	\begin{equation*}
		\Phi \left( |s_m[\bv](\by,\bx)| \right) = 0 \qquad \text{ for a.e. } \by \in B(\bx,r_{\bx})\,, \bx \in \Omega\,.
	\end{equation*}
	Now fix $\bx_0 \in \Omega$ and fix $r_0 = r_{\bx_0}$. Then by definition of $\Phi$ and $s_m$
	\begin{equation*}
		|\bv(\by)-\bv(\bx)|=|\by-\bx| \qquad \text{ for a.e. } \by \in B(\bx_0,r_0/2)\,, \bx \in B(\bx_0,r_0/2)\,.
	\end{equation*}
	The relation then holds for $\cL^{2d}$-a.e. $(\bx,\by) \in B(\bx_0,r_0/2) \times B(\bx_0,r_0/2)$, and so Lemma \ref{lma:MeasFxnsAreRigid} applies on $B(\bx_0,r_0/2)$.
    
    By covering $\Omega$ with sets of the form $B(\bx_0,r_0/2)$, we see that $\bv$ is a possibly piecewise affine map on $\Omega$.
	To conclude, note that since $\Omega$ is a domain there exists a finite chain of sets of the form $B(\bx_0,r_0/2)$ between any $\bx_1$ and $\bx_2$ in $\Omega$, and thus $\bv$ must be the same affine map at both points.
\end{proof}

\begin{theorem}\label{thm:LocalRigiditySmoothFxns}
    Suppose that $\bv \in C^2(\overline{\Omega};\bbR^d)$ satisfies \eqref{eq:Intro:LocalLimit}. Then there exists a constant matrix $\bbF \in \cO(d)$ and a vector $\bfb \in \bbR^d$ such that $\bv(\bx) = \bbF \bx + \bfb$ for every $\bx \in \Omega$.
\end{theorem}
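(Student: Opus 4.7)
I read the hypothesis ``$\bv$ satisfies \eqref{eq:Intro:LocalLimit}'' as the assertion that the common value of the two sides of \eqref{eq:Intro:LocalLimit} is zero, i.e., $\int_{\Omega} \bar{w}(\nabla\bv(\bx))\,\rmd\bx = 0$. (The identity \eqref{eq:Intro:LocalLimit} itself holds for every $\bv \in C^2(\overline{\Omega};\bbR^d)$ by the direct calculation recorded in the introduction, so without this reading the theorem would be vacuous; the introductory discussion makes clear that the intended content is rigidity for smooth maps realizing the zero set of the localized St.\ Venant--Kirchhoff energy.) With this interpretation, the plan has two parts: first derive the pointwise differential inclusion \eqref{eq:Intro:DiffInc} from the integral hypothesis, and second apply the classical Liouville rigidity argument to a $C^2$ map whose gradient lies in $\cO(d)$ at every point.

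For the first part I would use that since $\bv \in C^2(\overline{\Omega};\bbR^d)$, the map $\bx \mapsto \bar{w}(\nabla\bv(\bx))$ is continuous and nonnegative on $\overline{\Omega}$, so the vanishing of its integral forces $\bar{w}(\nabla\bv(\bx)) = 0$ for every $\bx \in \Omega$. From the explicit form $\bar{w}(\bbF) = 2\mu\|\bbF^T\bbF - \bbI\|^2 + \mu(\|\bbF\|^2-d)^2$, a sum of two nonnegative terms, $\bar{w}(\bbF) = 0$ is equivalent to $\bbF^T\bbF = \bbI$ (the condition $\|\bbF\|^2 = d$ is then automatic since $\mathrm{tr}(\bbI) = d$). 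Thus $\nabla\bv(\bx) \in \cO(d)$ for every $\bx \in \Omega$, which is precisely \eqref{eq:Intro:DiffInc}.

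For the second part I would carry out the classical Liouville computation pointwise in $\bx$. Differentiating $\sum_k (\p_i v_k)(\p_j v_k) = \delta_{ij}$ in the direction $x_\ell$ gives
\[
\sum_k (\p_i \p_\ell v_k)(\p_j v_k) + \sum_k (\p_i v_k)(\p_j \p_\ell v_k) = 0
\]
for every triple $(i,j,\ell)$. Writing this identity for the three cyclic permutations of $(i,j,\ell)$, using the symmetry $\p_i\p_j v_k = \p_j\p_i v_k$, and forming the combination (first) $+$ (second) $-$ (third) collapses to the single relation $\sum_k (\p_i\p_\ell v_k)(\p_j v_k) = 0$ for all $i,j,\ell$. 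In matrix form this reads $M_\ell(\bx)^T \nabla\bv(\bx) = 0$, where $(M_\ell)_{ki} := \p_i\p_\ell v_k$; since $\nabla\bv(\bx) \in \cO(d)$ is invertible, $M_\ell \equiv 0$. All second partial derivatives of $\bv$ therefore vanish, so $\nabla\bv$ is a constant matrix $\bbF \in \cO(d)$, and fixing $\bx_0 \in \Omega$ and defining $\bfb := \bv(\bx_0) - \bbF\bx_0$ produces the desired affine representation $\bv(\bx) = \bbF\bx + \bfb$.

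The main obstacle is the interpretive first step rather than the algebraic second step: a literal reading of the theorem is vacuous, so the content depends on reading \eqref{eq:Intro:LocalLimit} as a vanishing condition, after which continuity of $\nabla\bv$ delivers the pointwise constraint at once. The subsequent Liouville manipulation is standard and needs no regularity beyond $C^2$.
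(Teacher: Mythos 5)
Your proof is correct, and your handling of the hypothesis is sensible: the citation of \eqref{eq:Intro:LocalLimit} in the theorem statement is evidently a slip for \eqref{eq:Intro:DiffInc} (the introduction invokes this theorem precisely for fields satisfying \eqref{eq:Intro:DiffInc}, and the paper's own proof immediately uses $|\grad\bv|^2 = \mathrm{tr}(\grad\bv^T\grad\bv) = d$); your reading of the hypothesis as the vanishing of the limiting energy is, by continuity and nonnegativity of $\bar w(\grad\bv)$, equivalent to \eqref{eq:Intro:DiffInc}, so you arrive at the same starting point. From there, however, your argument is genuinely different from the paper's. You run the classical Liouville computation: differentiate $\grad\bv^T\grad\bv = \bbI$, cycle the three indices, use symmetry of second derivatives to conclude $\sum_k(\p_i\p_\ell v_k)(\p_j v_k)=0$, and invert $\grad\bv$ to kill all second derivatives; this works pointwise and needs nothing beyond $C^2$. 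The paper instead shows $\det\grad\bv$ is constant (its derivative is proportional to $\p_\ell|\grad\bv|^2 = 0$), deduces $\grad\bv = \pm\,\mathrm{cof}\,\grad\bv$, invokes the Piola identity to get harmonicity of $\bv$ (hence $C^\infty$ regularity), and then uses the Bochner-type identity $0 = \tfrac12\Delta[|\grad\bv|^2 - d] = |\grad^2\bv|^2$. Your route is more elementary — purely algebraic, no harmonicity or smoothness bootstrap — while the paper's is shorter to write and exhibits the structural identities (cofactor, Piola, Bochner) that are standard in elasticity. One small point worth making explicit in your write-up: passing from $\grad^2\bv \equiv 0$ (or, in the paper's version, from a locally constant gradient) to a single affine map on all of $\Omega$ uses connectedness of $\Omega$, which is guaranteed since $\Omega$ is a domain.
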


\begin{proof}
    First, since $\bv \in C^2\overline{\Omega};\bbR^d)$, $\det \grad \bv \in C^1(\overline{\Omega};\bbR^d)$, with
    \begin{equation*}
        \p_\ell [\det \grad \bv(\bx)] = \det \grad \bv(\bx) \sum_{j,k =1}^d \p_j u_k \cdot \p_\ell [\p_j u_k ] = \frac{\det \grad \bv(\bx) }{2} \p_\ell [ |\grad \bv(\bx)|^2 ]\,.
    \end{equation*}
    But $|\grad \bv(\bx)|^2 = \mathrm{tr}( \grad \bv^T \grad \bv) = d$, so therefore $\det \grad \bv$ is constant in all of $\overline{\Omega}$. Thus, either $\grad \bv = \mathrm{cof} \grad \bv$ for all $\bx \in \Omega$ or $\grad \bv = -\mathrm{cof} \grad \bv$ for all $\bx \in \Omega$. In both cases it follows from the Piola identity $\div \mathrm{cof} \grad \bv = {\bf 0}$ that $\bv$ is a harmonic function on $\Omega$. Thus $\bv \in C^{\infty}(\Omega)$, and so we can compute
    \begin{equation*}
        0 = \frac{1}{2} \Delta [ |\grad \bv|^2 - d] = \grad \bv : \Delta[ \grad \bv ] + |\grad^2 \bv|^2 = |\grad^2 \bv|^2\,.
    \end{equation*}
    Thus $\grad \bv(\bx)$ is constant in $\Omega$, and necessarily belongs to $\cO(d)$.
\end{proof}

\section*{Acknowledgements}

This manuscript has benefited from discussions with Qiang Du, Marta Lewicka, Armin Schikorra, and Xiaochuan Tian. The authors thank them for their valuable insight and input.

\end{document}